\documentclass[12pt]{amsart}
\usepackage{amsfonts,amssymb,amsmath,amscd,amsthm, amstext,bm,color}
\usepackage[colorlinks, citecolor=blue,pagebackref,hypertexnames=false]{hyperref}
\usepackage{enumerate}
\usepackage{fancyhdr}
\usepackage{geometry}
\usepackage{graphicx}
\usepackage{epstopdf}
\usepackage{mathrsfs}
\usepackage{txfonts}
\usepackage{tikz}
\usetikzlibrary{arrows.meta, positioning, decorations.pathreplacing}
\usepackage{url}

\usepackage{comment}
\allowdisplaybreaks

\newcommand{\R}{\mathbb R}
\newcommand{\Z}{\mathbb Z}
\newcommand{\Ccal}{\mathcal C}
\newcommand{\Dcal}{\mathcal D}
\newcommand{\Ocal}{\mathcal O}
\newcommand{\Wcal}{\mathcal W}
\newcommand{\Gcal}{\mathcal G}
\newcommand{\one}{\mathbf 1}
\newcommand{\dist}{\operatorname{dist}}
\newcommand{\supp}{\operatorname{supp}}
\newcommand{\co}{\operatorname{co}}
\newcommand{\id}{\operatorname{id}}
\newcommand{\norm}[1]{\left\|#1\right\|}
\newcommand{\abs}[1]{\left|#1\right|}
\newcommand{\ip}[2]{\left\langle #1,#2\right\rangle}

\theoremstyle{plain}
\newtheorem{theorem}{Theorem}[section]
\newtheorem{proposition}[theorem]{Proposition}
\newtheorem{lemma}[theorem]{Lemma}
\newtheorem{corollary}[theorem]{Corollary}

\theoremstyle{definition}
\newtheorem{definition}[theorem]{Definition}
\newtheorem{remark}[theorem]{Remark}
\newtheorem*{introtheoremA}{Theorem A}
\newtheorem*{introtheoremB}{Theorem B}

\numberwithin{equation}{section}

\title{Two-weight inequalities for the Dunkl--Poisson integrals}

\author{Qingdong Guo}
\address{Qingdong Guo, School of Mathematics and Statistics, Xiamen University of Technology, Xiamen 361024, P.R. China}
\email{qingdongmath@126.com}

\author{Ji Li}
\address{School of Mathematical and Physical Sciences, Macquarie University, NSW 2109, Australia}
\email{ji.li@mq.edu.au}

 \author{Brett D. Wick}
\address{Brett D. Wick, Department of Mathematics, Washington University - St. Louis, St. Louis, MO 63130-4899 USA}
\email{wick@math.wustl.edu}

\author{Liangchuan Wu}
\address{Liangchuan Wu,  School of Mathematical Sciences, Anhui University, Hefei, 230601, P.R.~China}
\email{wuliangchuan@ahu.edu.cn}

\date{}

\begin{document}

\begin{abstract}
We prove an $L^2$ two-weight testing theorem for the Dunkl--Poisson semigroup. The difficulty is geometric.  
The Dunkl orbit distance has several reflected diagonals, so a single orbit-box test may mix different chamber components.  
We avoid this by working on one Weyl chamber and keeping the chamber indices.  Under the wall-null assumption the full operator becomes a finite matrix of scalar
positive Poisson-type operators.  
In each entry the orbit diagonal is just the ordinary diagonal in the chamber variables.  
The scalar proof is then a principal-cube stopping-time argument, with two Dunkl kernel comparisons as the only new estimates.  
The resulting forward and backward tests are necessary and sufficient for the original Dunkl--Poisson inequality.
\end{abstract}

\subjclass[2020]{Primary 42B20; Secondary 42B25, 42B35, 43A85.}
\keywords{ Dunkl--Poisson semigroup, two-weight inequality, Weyl chamber, chamber lifting, testing condition, orbit distance.}

\maketitle


\section{Introduction}

A two-weight problem asks when a positive integral operator maps one weighted $L^2$ space into another.  
For the Dunkl--Poisson semigroup the expected answer should look like a testing theorem, but the underlying geometry is not the Euclidean one.  
The kernel is organized by reflection orbits, not by a single singular diagonal.  
Our aim is to put this orbit geometry into variables in which the usual stopping-time proof can be used, without imposing symmetry on the two measures.

Let $R\subseteq \R^N\setminus\{0\}$ be a reduced finite root system, let $G$ be the finite reflection group generated by $R$, and let $\kappa\ge0$ be a
$G$-invariant multiplicity function on $R$.   The associated Dunkl measure is
$$
        d\omega(x):=\prod_{\alpha\in R}\abs{\ip{\alpha}{x}}^{\kappa(\alpha)}\,dx.
$$
The Dunkl--Poisson semigroup has a positive kernel $p_t(x,y)$, symmetric in the variables, $p_t(x,y)=p_t(y,x)$, and $G$-invariant.  
It is recalled in \eqref{eq:poisson-kernel-formula} and acts initially by
$$
        P_t f(x):=\int_{\R^N}p_t(x,y)f(y)\,d\omega(y),\qquad t>0.
$$
Given a locally finite positive Borel measure $\gamma$ on $\R^N$ and a locally finite positive Borel measure $\mu$ on $\R^{N+1}_+:=\R^N\times(0,\infty)$, define
$$
        P_\gamma f(x,t):=\int_{\R^N}p_t(x,y)f(y)\,d\gamma(y),  \qquad (x,t)\in\R^{N+1}_+.
$$
As usual for positive integral operators, the operator is first interpreted on non-negative Borel functions, with values allowed in $[0,\infty]$, 
and on bounded simple functions for norm estimates.  We study the two-weight inequality
\begin{equation}\label{eq:intro-two-weight}
        \norm{P_\gamma f}_{L^2(\R^{N+1}_+,d\mu)}  \le C \norm{f}_{L^2(\R^N,d\gamma)}.
\end{equation}

We work in the Hilbert-space case $p=2$.  The chamber lifting and finite matrix reduction are not specific to $L^2$, 
but a full $L^p$ theorem would require carrying the $p$-dependent scalar stopping-time argument through the chamber notation.  
We keep the present paper at $p=2$ in order to isolate the new Dunkl geometric reduction.

The natural distance in Dunkl analysis is the orbit distance
\begin{equation}\label{eq:intro-orbit-distance}
        d(x,y):=\min_{\sigma\in G}\norm{x-\sigma y}.
\end{equation}
Hence $d(x,y)=0$ exactly on a $G$-orbit.  A full-space proof of \eqref{eq:intro-two-weight} therefore has to deal with the reflected diagonals
$$
        \{(x,y):x=\sigma y\},\qquad \sigma\in G,
$$
all at once.  Ordinary dyadic cubes do not distinguish which reflected diagonal is responsible for a local interaction.  
If all reflected pieces are compressed into a single orbit-box test, then one must compare the masses of the different chamber pieces; 
see Proposition~\ref{prop:orbit-tests-imply-chamber-tests} in Section~\ref{sec:consequences}, especially the comparison hypothesis \eqref{eq:orbit-gamma-comparison}.  
That comparison is useful for orbit-formulated corollaries, but it should not be part of the basic boundedness theorem.

Our main tool is to keep, rather than quotient out, the chamber components. Fix an open fundamental Weyl chamber $\Ccal$, namely a connected component of
$$
        \{x\in\R^N:\ip{x}{\alpha}\ne0\text{ for all }\alpha\in R\}.
$$
We write the elements of the reflection group as
$$
        G=\{\sigma_1,\ldots,\sigma_{|G|}\},
$$
and use this notation throughout to label the chamber pieces
$\sigma_j\Ccal$.

For $x,y\in\Ccal$, we have 
\begin{equation}\label{eq:intro-chamber-identity}
        d(\sigma_\rho x,\sigma_\tau y)=\norm{x-y},  \qquad \sigma_\rho,\sigma_\tau\in G;
\end{equation}
see Lemma~\ref{lem:chamber-minimization}.  Once the chamber variables are fixed, each matrix entry has only the ordinary diagonal $x=y$.

Following the chamber lifting viewpoint used for Dunkl Calder\'on-type commutators in \cite{HanLeeLiSawyerWuCalderonSubmitted}, set
$$
        (U_\gamma f)_\tau(x):=f(\sigma_\tau x),  \qquad  (V_\mu F)_\rho(x,t):=F(\sigma_\rho x,t),  \qquad x\in\Ccal,\,t>0.
$$
Under the wall-null assumption on the two measures, these maps are isometric identifications with finite direct sums over the chambers; see
Lemma~\ref{lem:L2-chamber-isometry}.  The lifted operator is then a finite matrix:
$$
        (V_\mu P_\gamma f)_\rho(x,t)   =  \sum_{\tau=1}^{|G|}T_{\rho\tau}(U_\gamma f)_\tau(x,t),   \qquad (x,t)\in\Ccal\times(0,\infty).
$$
Here
\begin{equation}\label{eq:intro-scalar-entry}
        T_{\rho\tau}h(x,t)   := \int_{\Ccal}   p_t(\sigma_\rho x,\sigma_\tau y)h(y)\,d\gamma_\tau(y).
\end{equation}
Here $\gamma_\tau(E):=\gamma(\sigma_\tau E)$.  
The full two-weight problem is  therefore reduced to a finite-dimensional vector-valued problem whose entries are scalar positive operators on the chamber.  
The orbit non-locality is not ignored; it is encoded in the finite matrix indices $(\rho,\tau)$.

A direct scalar proof over orbit balls would lose this information: one orbit-box can contain mass from several reflected chamber components.  
The chamber proof keeps the components apart.  The wall-null lifting gives finite direct sums over the chambers, 
the identity \eqref{eq:intro-chamber-identity} reduces the singularity in each entry to $x=y$, 
and two kernel comparisons put that entry in the range of the usual positive Poisson-type stopping-time proof.

This way of working is in line with other real-variable arguments in Dunkl analysis, 
where the reflection geometry is kept visible until it becomes a finite-dimensional Euclidean structure.  
Chamber and orbit-adapted decompositions also appear in work on square functions, 
Riesz transforms, and commutators \cite{CGLWW-Square,HanLeeLiSawyerWuCalderonSubmitted, LSWW-Riesz}.
Here it has a useful consequence: the main theorem requires neither $G$-invariance of the measures nor orbit-mass comparison.  
Orbit tests are recovered later only under additional componentwise comparisons; see Proposition~\ref{prop:orbit-tests-imply-chamber-tests} in Section~\ref{sec:consequences}.  
Related two-weight Poisson results include \cite{DLLW,LiWickBessel}, and the testing language follows the classical weighted and two-weight theory
\cite{HolmesLaceyWick,LaceyDuke,LaceySawyerShenUriarte, Muckenhoupt,SawyerMaximal,SawyerPoisson,SawyerShenUriarte,Treil2015}.

We state the main results in the notation used later.  Let $\Dcal$ be the standard dyadic grid in $\R^N$.

For $Q\in\Dcal$ and a fixed dilation factor $a>0$, write
$$
        Q_\Ccal:=Q\cap\Ccal,  \qquad    \widehat Q_\Ccal:=Q_\Ccal\times(0,\ell(Q)],
$$
and
$$
        aQ_\Ccal:=(aQ)\cap\Ccal,   \qquad   \widehat{aQ}_\Ccal:=(aQ_\Ccal)\times(0,\ell(Q)].
$$
Here $aQ$ is the half-open cube with the same centre as $Q$ and side length  $a\ell(Q)$.  
For non-dyadic dilates the boundary convention is immaterial.  For the odd integer dilates used below it fixes the subdivision exactly:
if $a$ is an odd positive integer, then $aQ$ is the disjoint union of the $a^N$ half-open dyadic cubes of side length $\ell(Q)$ contained in $aQ$.
In particular, when $a=3$ we write
$$
        3Q=\bigsqcup_{\theta=1}^{3^N} I_\theta(Q),
$$
where the $I_\theta(Q)$ are these $3^N$ dyadic subcubes.  
We use $\bigsqcup$ only to mark unions that are genuinely disjoint; all other unions are written with $\cup$ or $\bigcup$.

When $Q$ is fixed we suppress it and write simply $I_\theta$.  The vertical height of $\widehat{aQ}_\Ccal$ is always $\ell(Q)$, not $\ell(aQ)$.

Let
\begin{equation}\label{eq:intro-hyperplanes}
     \Wcal:=\bigcup_{\alpha\in R}  \left\{x\in\R^N:\ip{x}{\alpha}=0\right\}
\end{equation}
be the union of the reflection hyperplanes.  For wall-null measures we write
$$
        \gamma_\tau(E):=\gamma\left(\sigma_\tau E\right),  \qquad   \mu_\rho(S):=\mu\left((\sigma_\rho\times\id)S\right),
$$
where $E\subseteq\Ccal$ and $S\subseteq\Ccal\times(0,\infty)$ are Borel sets.  
The  backward tests are understood with non-negative extended integrals.  
If an input  chamber component is the zero measure, the corresponding scalar entry and its testing constants are taken to be zero.  
No separate Carleson-box finiteness  assumption is imposed: once the forward test is finite and the input chamber measure is non-zero, 
Lemma~\ref{lem:forward-implies-square-finiteness} shows that the quantities $\iint_{\widehat Q_\Ccal}t^2\,d\mu_\rho$ are finite on every chamber box needed in the proof.

The wall-null assumption is used only to avoid multiple representatives in the  chamber parametrization. 
It is automatic for the Dunkl measure and for all measures absolutely continuous with respect to Lebesgue measure. 
For measures charging reflection walls, the statement would have to be supplemented by a finite stratification of the walls and by the corresponding lower-dimensional
versions of the chamber parametrization. 
We do not pursue this extension here.

\begin{introtheoremA}
Let $\gamma$ be a locally finite positive Borel measure on $\R^N$, and let $\mu$ be a locally finite positive Borel measure on $\R^{N+1}_+$.  Assume
$$
        \gamma(\Wcal)=0  \quad \text{and} \quad    \mu(\Wcal\times(0,\infty))=0.
$$
Let $T_{\rho\tau}$ be the scalar entries defined by \eqref{eq:intro-scalar-entry}.  Then
$$
        P_\gamma:L^2(\R^N,d\gamma)     \longrightarrow   L^2(\R^{N+1}_+,d\mu)
$$
is bounded \textbf{if and only if}, for every $1\le \rho,\tau\le |G|$,  
the following two testing conditions hold uniformly over all dyadic cubes $Q$  with $Q_\Ccal\ne\varnothing$:
\begin{equation}\tag{A1}\label{eq:intro-forward-test}
        \iint_{\widehat{3Q}_\Ccal}   |T_{\rho\tau}(\one_{Q_\Ccal})(x,t)|^2   \,d\mu_\rho(x,t)   \le   F_{\rho\tau}^2\gamma_\tau(Q_\Ccal)
\end{equation}
and
\begin{equation}\tag{A2}\label{eq:intro-backward-test}
        \int_{3Q_\Ccal}   |T_{\rho\tau}^*(t\one_{\widehat Q_\Ccal})(y)|^2  \,d\gamma_\tau(y)
        \le   B_{\rho\tau}^2   \iint_{\widehat Q_\Ccal} t^2\,d\mu_\rho(x,t),
\end{equation}
where $F_{\rho\tau}$ and $B_{\rho\tau}$ denote the least admissible constants in  these two tests, and where
$$
       T_{\rho\tau}^*g(y)  :=   \iint_{\Ccal\times(0,\infty)}  p_t(\sigma_\rho x,\sigma_\tau y)g(x,t)\,d\mu_\rho(x,t).
$$
Moreover, if $\mathcal N$ denotes the best constant in \eqref{eq:intro-two-weight},
then
$$
        \max_{\rho,\tau}(F_{\rho\tau}+B_{\rho\tau})   \lesssim_{N,R,\kappa}   \mathcal N
        \lesssim_{|G|,N,R,\kappa}   \max_{\rho,\tau}(F_{\rho\tau}+B_{\rho\tau}).
$$
The lower comparison has constants independent of the group order.  
The upper  comparison is the finite matrix estimate and deteriorates with $|G|$.  This is proved below as Theorem~\ref{thm:main-chamber}.
\end{introtheoremA}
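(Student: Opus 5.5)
The proof has two halves: a chamber reduction, which turns the full problem into a finite matrix of scalar problems, and a scalar testing theorem for each entry.

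\emph{Reduction.} By the wall-null hypothesis, Lemma~\ref{lem:L2-chamber-isometry} shows that $U_\gamma$ and $V_\mu$ are isometric isomorphisms:
\begin{itemize}
\item $U_\gamma$ maps $L^2(d\gamma)$ onto $\bigoplus_\tau L^2(\Ccal,d\gamma_\tau)$;
\item $V_\mu$ maps $L^2(d\mu)$ onto $\bigoplus_\rho L^2(\Ccal\times(0,\infty),d\mu_\rho)$.
\end{itemize}
Changing variables $y=\sigma_\tau y'$ on each chamber piece then gives the matrix identity $V_\mu P_\gamma=(T_{\rho\tau})\,U_\gamma$. Hence $\mathcal N$ is the norm of the block matrix $(T_{\rho\tau})$ acting on these direct sums.

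Since every $T_{\rho\tau}$ is positive and all its entries are non-negative, each scalar norm is at most the matrix norm. To see this, test with inputs supported in a single component $\tau$ and read off a single output component $\rho$. This gives $\|T_{\rho\tau}\|\le\mathcal N$ with no dependence on $|G|$. In the other direction, Cauchy--Schwarz over the $|G|^2$ entries gives $\mathcal N\le |G|\max_{\rho,\tau}\|T_{\rho\tau}\|$. So it suffices to prove the scalar equivalence
$$\|T_{\rho\tau}\|\simeq F_{\rho\tau}+B_{\rho\tau},$$
uniformly in $\rho,\tau$.

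\emph{Necessity.} For (A1), test $T_{\rho\tau}$ on $\one_{Q_\Ccal}$ and restrict the output integral to $\widehat{3Q}_\Ccal$. For (A2), test the adjoint on $g=t\one_{\widehat Q_\Ccal}$: this gives $\|T^*_{\rho\tau}g\|_{L^2(\gamma_\tau)}\le\|T_{\rho\tau}\|\,\|g\|_{L^2(\mu_\rho)}$, and $\|g\|^2_{L^2(\mu_\rho)}=\iint_{\widehat Q_\Ccal}t^2\,d\mu_\rho$. Neither test uses the kernel structure, so $F_{\rho\tau}+B_{\rho\tau}\le 2\|T_{\rho\tau}\|$.

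\emph{Sufficiency: kernel comparisons.} First I would use Lemma~\ref{lem:chamber-minimization}, which gives $d(\sigma_\rho x,\sigma_\tau y)=\|x-y\|$ on $\Ccal$, together with the standard Dunkl--Poisson bounds from \eqref{eq:poisson-kernel-formula}. These give a two-sided estimate
$$p_t(\sigma_\rho x,\sigma_\tau y)\lesssim \frac{t}{t+\|x-y\|}\,\frac{1}{\omega(B(x,t+\|x-y\|))}.$$
I would then extract the two comparisons the abstract refers to:
\begin{itemize}
\item \emph{Monotonicity in the tree.} For $(x,t)\in\widehat Q_\Ccal$ and $y\notin 3Q$, the kernel $p_t(\sigma_\rho x,\sigma_\tau y)$ is comparable to $\tfrac{t}{\ell(Q)}\,p_{\ell(Q)}(\sigma_\rho x_Q,\sigma_\tau y)$, where $x_Q$ is a point of $Q_\Ccal$.
\item \emph{Doubling.} The doubling property of $\omega$, with constants depending on $N,R,\kappa$, lets me replace $x$ by $x_Q$ up to constants.
\end{itemize}

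\emph{Sufficiency: stopping-time argument.} With these comparisons in hand I would run the usual principal-cube stopping-time argument of Sawyer type, as adapted for Poisson operators in \cite{LiWickBessel,DLLW}, with the chamber boxes $\widehat Q_\Ccal$ playing the role of Carleson boxes.
\begin{enumerate}
\item Decompose the upper half space into Whitney-type regions $\widehat Q_\Ccal\setminus\bigcup_{Q'\subsetneq Q}\widehat{Q'}_\Ccal$.
\item Split $\langle T_{\rho\tau}f,g\rangle$ into a local part, where the input lives in $3Q$, and a tail part.
\item Control the local part using the forward test (A1) on the subcubes $I_\theta$ and the principal cubes of $f$ with respect to $\gamma_\tau$.
\item For the tail, use the first kernel comparison to dominate the interaction by $\sum_Q \langle f\rangle \cdot T^*_{\rho\tau}(t\one_{\widehat Q_\Ccal})$, which the backward test (A2) controls via a Carleson embedding over the principal cubes.
\end{enumerate}
Lemma~\ref{lem:forward-implies-square-finiteness} supplies finiteness of $\iint_{\widehat Q_\Ccal}t^2\,d\mu_\rho$, which makes these manipulations legitimate.

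The main obstacle I expect is the tail estimate at the boundary of the chamber. Cubes $Q$ may meet $\Ccal$ only partially, so $3Q_\Ccal$ and $Q_\Ccal$ are irregular sets. The kernel comparison must be uniform there, and this requires checking that $\omega$-balls centred in $\Ccal$ remain doubling relative to $\Ccal$-truncated cubes. I would handle this by comparing everything with the full-space balls $B(x,r)$, which the Dunkl measure controls regardless of the chamber.
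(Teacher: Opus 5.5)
Your chamber reduction, the matrix bound $\mathcal N\le |G|\max_{\rho,\tau}\|T_{\rho\tau}\|$, and the necessity argument match the paper's. The gap is in sufficiency, and it starts at the kernel level. The bound you display is only an upper bound. No lower bound of that form holds for the off-diagonal entries $\rho\ne\tau$, which are exactly what the matrix formulation adds. For $x\in\Ccal$ the points $\sigma_\rho x$ and $\sigma_\tau x$ lie in different chambers, and \eqref{eq:poisson-upper} gives $K^{\rho\tau}_t(x,x)\lesssim \frac{t}{\|\sigma_\rho x-\sigma_\tau x\|}\,\omega(B(x,t))^{-1}$. This is $o\bigl(\omega(B(x,t))^{-1}\bigr)$ as $t\to0$, whereas your expression equals $\omega(B(x,t))^{-1}$ at $y=x$. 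So the route ``size estimate plus doubling of $\omega$'' cannot give the two-sided comparison $K_t(x,y)\simeq\frac{t}{\ell(Q)}K_{\ell(Q)}(x_Q^\Ccal,y)$ for these entries. Nor can it give the second comparison on which the stopping-time argument rests, which you never state: the maximum principle $K_t(x,y)\le C_*K_t(x,z)$ for $y\in Q_\Ccal$, $z\in 9Q_\Ccal$, $(x,t)\notin\widehat{3Q}_\Ccal$ (Lemma~\ref{lem:kernel-maximum-comparison}). The paper proves both inside the integral representation \eqref{eq:poisson-kernel-formula}. It keeps one representing measure fixed: $\lambda_{\sigma_\rho x}$ for the maximum principle, and $\lambda_{\sigma_\tau y}$ for the vertical comparison, after using the symmetry $p_t(u,v)=p_t(v,u)$ so that the measure does not move with $x$. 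It then compares $t^2+A^2$ pointwise in $\xi$, using that $a\mapsto A(u,a,\xi)$ is $1$-Lipschitz and that $A(u,v,\xi)\ge d(u,v)=\|x-y\|$. No doubling of $\omega$ enters, so the chamber-boundary obstacle you anticipate is not the real one. The only boundary issue is using a point of $Q_\Ccal$ instead of the centre, which you already do.

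The stopping-time part is a list of ingredients rather than an argument, and its one concrete claim does not match what the comparison produces. Take a box $\widehat R_\Ccal$ with input $f\one_E$, where $E\cap 3R_\Ccal=\varnothing$. The vertical comparison gives $\iint_{\widehat R_\Ccal}T(f\one_E)\,g\,d\eta\simeq\Pi(R)\int_E f\,T^*(t\one_{\widehat R_\Ccal})\,d\nu$, where $\Pi(R)$ is the $t^2\,d\eta$-average of $g/t$. So the tail requires stopping on $g$, not only on $f$. Moreover, the natural use of the backward test, Cauchy--Schwarz on each $3S_\Ccal$, leaves $\sum_S\|f\one_{3S_\Ccal}\|_{L^2(\nu)}^2$ over nested principal cubes, and this is not bounded by $\|f\|^2_{L^2(\nu)}$. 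The missing idea is the localization that makes the tests usable. The paper works with the level sets $\Omega_k=\{T^*g>2^k\}$ and their chamber Whitney cubes, and uses the maximum principle to obtain $T^*(g\one_{\widehat{3Q}_\Ccal})\ge2^k$ on $F_k(Q)$. Only then does it split into a forward-test part and a part handled by principal cubes of $g/t$ with respect to $t^2\,d\eta$. Even that second part needs the forward test again (the term $L_{3,2}$), together with the principal-cube packing counts of the appendix. Without this, your step~4 (backward test alone, via a Carleson embedding over principal cubes of $f$) is not justified.
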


The proof of Theorem~A reduces to the following scalar chamber statement, applied to each matrix entry.

\begin{introtheoremB}
Fix one pair $(\rho,\tau)$ with $1\le\rho,\tau\le |G|$.  Write
$$
        Tf(x,t):=\int_\Ccal p_t(\sigma_\rho x,\sigma_\tau y)f(y)\,d\gamma_\tau(y).
$$
Then
$$
        T:L^2(\Ccal,d\gamma_\tau)   \longrightarrow     L^2(\Ccal\times(0,\infty),d\mu_\rho)
$$
is bounded \textbf{if and only if} the two tests \eqref{eq:intro-forward-test}  and \eqref{eq:intro-backward-test}, 
with this fixed pair $(\rho,\tau)$, hold  uniformly over dyadic cubes.  Here $F_{\rho\tau}$ and $B_{\rho\tau}$ denote the least admissible testing constants for this fixed pair.  
The operator norm is  comparable to $F_{\rho\tau}+B_{\rho\tau}$, with constants depending only on $N$, $R$, and $\kappa$, 
and hence independent of the pair of chambers.  The proof is given in Theorem~\ref{thm:scalar-chamber}.
\end{introtheoremB}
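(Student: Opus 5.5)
Theorem~B splits into necessity and sufficiency.

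\emph{Necessity.} Suppose $\|T\|$ is finite. The forward test \eqref{eq:intro-forward-test} is immediate: apply $T$ to $f=\one_{Q_\Ccal}$ and restrict the integral to $\widehat{3Q}_\Ccal$, which gives $F_{\rho\tau}\le\|T\|$. For the backward test \eqref{eq:intro-backward-test}, use that $T^*$ is the Hilbert adjoint, so $\|T^*\|=\|T\|$. Then test $T^*$ on $g=t\one_{\widehat Q_\Ccal}\in L^2(d\mu_\rho)$ and restrict the output to $3Q_\Ccal$. Finiteness of $\iint_{\widehat Q_\Ccal}t^2\,d\mu_\rho$ is supplied by Lemma~\ref{lem:forward-implies-square-finiteness}, or the inequality is trivial if this quantity is infinite. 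This gives $B_{\rho\tau}\le\|T\|$, with absolute constants.

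\emph{Sufficiency.} The plan is to run the scalar positive Poisson stopping-time proof (Sawyer; Lacey--Sawyer--Uriarte-Tuero) in the chamber variables. The geometric input is Lemma~\ref{lem:chamber-minimization}: for $x,y\in\Ccal$ we have $d(\sigma_\rho x,\sigma_\tau y)=\|x-y\|$. Combined with the standard Dunkl--Poisson bounds, this should yield two kernel comparisons, uniform in $\rho,\tau$:
\begin{enumerate}
\item[(K1)] Size bound. For $x,y\in\Ccal$,
\[
p_t(\sigma_\rho x,\sigma_\tau y)\lesssim \frac{1}{\omega(B(x,t+\|x-y\|))}\cdot\frac{t}{t+\|x-y\|}.
\]
\item[(K2)] Doubling/Harnack comparison. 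If $t\sim t'$, $\|x-x'\|\lesssim t$ and $y$ lies outside the relevant box, then $p_t(\sigma_\rho x,\sigma_\tau y)\sim p_{t'}(\sigma_\rho x',\sigma_\tau y)$. In the far regime there should also be a comparison with the same kernel evaluated at a parent cube's centre.
\end{enumerate}
The measure $\omega$ is doubling with constants depending on $N,R,\kappa$, so the usual dyadic arguments go through. The factor $\omega(B)^{-1}$ plays the role of $|B|^{-1}$ in the Euclidean argument.

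The sufficiency steps, in order, are as follows.
\begin{enumerate}
\item Decompose $\Ccal\times(0,\infty)$ into Whitney-type regions $W(Q)=Q_\Ccal\times(\ell(Q)/2,\ell(Q)]$.
\item Use (K1) and (K2) to write $\langle Tf,g\rangle_{\mu_\rho}$, for non-negative $f,g$, as comparable to a positive dyadic sum. Each term pairs $g$ on $W(Q)$ with averages of $f$ on the ancestors $R\supseteq Q$ (or on $3R$). The weight is $\ell(Q)/\ell(R)$ times $\omega(R)^{-1}\gamma_\tau$-mass, which gives geometric decay.
\item Build principal cubes for $f$ with respect to $\gamma_\tau$. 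Cubes $F$ where the $\gamma_\tau$-average of $f$ jumps by a factor $4$ are stopping cubes. The standard Carleson embedding gives $\sum_F(\text{avg}_F f)^2\gamma_\tau(F)\lesssim\|f\|^2$.
\item Split the form into two parts. The \emph{local} part pairs each stopping cube $F$ with the tent $\widehat{3F}_\Ccal$; there the forward test \eqref{eq:intro-forward-test} applies to the averaged function. The \emph{far} part has the input far from the output. It is dualized and controlled by the backward test \eqref{eq:intro-backward-test}, applied to $t\one_{\widehat Q_\Ccal}$. The estimate $g$ on $W(Q)$ $\le t\cdot(\ldots)$ holds because $t\sim\ell(Q)$ there, which explains the weight $t$ in the test.
\item Run a second, dual stopping construction on $g$ with respect to $t^2\,d\mu_\rho$. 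This uses finiteness on boxes, again from Lemma~\ref{lem:forward-implies-square-finiteness}.
\end{enumerate}
One boundary point must be handled: $3Q_\Ccal$ may be cut by walls. Cubes with $Q_\Ccal=\varnothing$ simply drop out, and cubes that are only partially in $\Ccal$ are treated with the truncated sets. This is harmless because all integrals are over $\Ccal$ and (K1), (K2) hold for $x,y\in\Ccal$.

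\emph{Main obstacle.} I expect the hardest step to be proving (K2) uniformly in the pair of chambers. The Dunkl--Poisson kernel is only two-sided comparable to expressions involving $\omega(B(x,t+d(x,y)))$, with correction factors $\prod(1+\|x-\sigma y\|/(t+d))^{-2}$-type terms coming from non-minimizing reflections. For $\rho\neq\tau$ these corrections are not symmetric in the variables. My first attempt would be to discard the corrections: by Lemma~\ref{lem:chamber-minimization} the reflections are non-minimizing, and the corrections are bounded between constants depending on $|G|$ only through $R$. If this fails, I would use only the upper bound (K1) together with the Harnack-type regularity in $t$ coming from the semigroup property. The stopping argument only needs upper bounds plus comparability along dyadic ancestors.
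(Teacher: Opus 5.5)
Your necessity argument is correct and is the paper's. The sufficiency plan has a genuine gap at Step~2, and the remedy in your last paragraph is false. The hypotheses \eqref{eq:intro-forward-test}--\eqref{eq:intro-backward-test} concern the true operator $T$. Suppose you replace $T$ by a positive dyadic model with coefficients $\frac{\ell(Q)}{\ell(R)}\,\omega(R)^{-1}\int_{3R}f\,d\gamma_\tau$. You can then use those hypotheses only if the model is $\lesssim T$, i.e.\ only if $p_t(\sigma_\rho x,\sigma_\tau y)$ is bounded \emph{below} by the right side of (K1). For $\rho\ne\tau$ this fails, and not by a constant.

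Already for $N=1$, $\kappa=0$, $G=\{\pm1\}$, $\Ccal=(0,\infty)$, the off-diagonal entry is $K_t(x,y)=c\,t/(t^2+(x+y)^2)$. At $x=y=r$, $t=1$ this is $\simeq r^{-2}$, while the right side of (K1) is $\simeq 1$. For $\kappa>0$ the rank-one representing measure gives a loss of order $r^{-2}\log r$. So the correction factors from non-minimizing reflections are bounded above by $1$ but not below. Now take $\gamma_\tau=a\delta_r$ and $\mu_\rho=b\delta_{(r,1)}$ with $ab=r^4$. Then $\norm{T}\simeq 1$, so $F_{\rho\tau}+B_{\rho\tau}\lesssim 1$, while your model already has forward testing constant $\gtrsim r^2$. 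Hence no argument that first trades $T$ for a model built from size bounds can close, and your fallback ``only (K1)'' has the same defect.

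The missing idea is to avoid size lower bounds altogether and run Sawyer's argument on the true kernel, using only comparisons of the kernel with itself. Arrange, via symmetry if needed, that the two kernel values share one representing measure in \eqref{eq:poisson-kernel-formula}. Then $a\mapsto A(u,a,\xi)$ is $1$-Lipschitz, and \eqref{eq:A-lower-d} with Lemma~\ref{lem:chamber-minimization} gives $A\ge\norm{x-y}$ for chamber points. These two facts yield the maximum principle of Lemma~\ref{lem:kernel-maximum-comparison} and the vertical comparison of Lemma~\ref{lem:vertical-comparison}. Both hold uniformly in $(\rho,\tau)$, however small the correction factors are.

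These comparisons drive the paper's level-set proof:
\begin{itemize}
\item Whitney cubes of $\{T^*g>2^k\}$ taken inside $\Ccal$;
\item the local lower bound $T^*(g\one_{\widehat{3Q}_\Ccal})\ge 2^k$ on $F_k(Q)$;
\item the forward test on the $\alpha_k$ part;
\item the backward test plus principal cubes for $g/t$ in $L^2(t^2\,d\eta)$ on the $\beta_k$ part.
\end{itemize}

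Incidentally, your (K2) is not the obstacle. It is exactly this Lipschitz argument, and it holds for every $y$ whenever $t\simeq t'$ and $\norm{x-x'}\lesssim t$.
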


The chamber formulation has three useful consequences, developed in detail in Section~\ref{sec:consequences}. 
\begin{itemize}
\item  First, if both weights are $G$-invariant, then the  $|G|^2$ chamber tests of Theorem~A reduce to $|G|$ tests for the relative chamber
operators indexed by $s=\sigma_\rho^{-1}\sigma_\tau\in G$.

\item  Second, if both  measures are supported in single, possibly distinct, chambers, the full inequality reduces to one scalar chamber test.

\item Third, the older orbit-box  testing formulation can be recovered from Theorem~A 
under suitable  componentwise orbit-mass comparison hypotheses; this clarifies the precise place where orbit-mass comparability enters, 
and why no such hypothesis is needed in the chamber theorem itself.
\end{itemize}

The local $t^2\,d\mu$ finiteness needed for the backward tests is not an  additional hypothesis.  
It follows from finite forward testing by Lemma~\ref{lem:forward-implies-square-finiteness}.

The paper is organized as follows.  
Section~\ref{sec:dunkl-preliminaries} recalls the Dunkl objects and the Poisson kernel estimates.  
Section~\ref{sec:chamber-lifting} develops the chamber lifting and proves the orbit-metric identity.  
Section~\ref{sec:scalar-chamber} contains the dyadic decomposition, the elementary Whitney overlap estimates, and the proof of the scalar chamber theorem.  
Section~\ref{sec:matrix-theorem} assembles the finite matrix theorem from the scalar chamber estimates.  
Section~\ref{sec:consequences} derives the invariant, single-chamber, and orbit-box consequences.  
The appendix gives the finite dyadic packing argument used in the stopping-time proof.

In this paper, dyadic cubes are half-open, and dilated cubes are taken with the corresponding half-open convention.  
This gives exact disjoint decompositions even for arbitrary Borel measures.  The convention has no effect on any estimate.  
The notation $f\lesssim g$ and $f\simeq g$ mean that there exists $C\ge1$ such that $f\le Cg$ and $f/C\le g\le Cf$, respectively; 
subscripts on $\lesssim$ indicate the allowed structural dependence of the implicit constant.

\section{Dunkl preliminaries}\label{sec:dunkl-preliminaries}

We recall the notation used throughout the paper.  For background on Dunkl operators, Dunkl transforms, and related applications; see
\cite{AmriHammi, AmriSifi, DaiWang, deJeu, Dunkl1989,Dunkl1991, Opdam}.  
The Euclidean inner product and norm on $\R^N$ are denoted by $\ip{x}{y}$ and $\norm{x}:=(\sum_{i=1}^{N}|x_{i}|^{2})^{1/2}$.

Let $R\subseteq \R^N\setminus\{0\}$ be a reduced root system.  Thus
$$
        R\cap \R\alpha=\{\alpha,-\alpha\}, \qquad \sigma_\alpha(R)=R,
$$
where
$$
        \sigma_\alpha(x):=x-2\frac{\ip{x}{\alpha}}{\norm{\alpha}^2}\alpha.
$$
We normalize the root normals by $\norm{\alpha}^2=2$.  
This choice does not change the reflection or the Dunkl difference quotient, and changes $d\omega$ only by an overall constant.

Let $G$ be the finite reflection group generated by  $\{\sigma_\alpha:\alpha\in R\}$.  
If $E\subseteq \R^N$ and $x\in\R^N$, set
$$
        O(E):=\bigcup_{\sigma\in G}\sigma(E),  \qquad   O(x):=O(\{x\}).
$$
The orbit distance \eqref{eq:intro-orbit-distance} satisfies $\abs{\norm{x}-\norm{y}}\le d(x,y)\le \norm{x-y}$, $x,y\in\R^N$.

Let $\kappa:R\to[0,\infty)$ be $G$-invariant.  The Dunkl operator in direction $\xi\in\R^N$ is
$$
     T_\xi f(x):=\partial_\xi f(x)   +\sum_{\alpha\in R}
        \frac{\kappa(\alpha)}{2}\ip{\alpha}{\xi}   \frac{f(x)-f(\sigma_\alpha x)}{\ip{\alpha}{x}}.
$$
The Dunkl measure is $d\omega(x)=\omega(x)dx$, where
$$
     \omega(x):=\prod_{\alpha\in R}\abs{\ip{\alpha}{x}}^{\kappa(\alpha)}.
$$
It is $G$-invariant.  Put
$$
        \mathbf N:=N+\sum_{\alpha\in R}\kappa(\alpha).
$$
The measure $d\omega$ is doubling in the form
$$
     \lambda^N\omega(B(x,r)) \lesssim  \omega(B(x,\lambda r))  \lesssim
        \lambda^{\mathbf N}\omega(B(x,r)),   \qquad \lambda\ge1.
$$
Since $G$ is finite and $\omega$ is $G$-invariant,
$$
     \omega(B(x,r))   \le \omega(O(B(x,r)))    \le   |G|\,\omega(B(x,r)).
$$

Let $e_1,\ldots,e_N$ be the standard basis and write $T_j:=T_{e_j}$.  Set
$$
        \Delta_\kappa:=\sum_{j=1}^N T_j^2.
$$
Following our convention, the non-negative self-adjoint Dunkl Laplacian is $-\Delta_\kappa$, and the Dunkl--Poisson semigroup is
$$
        P_t=e^{-t\sqrt{-\Delta_\kappa}},    \qquad t>0.
$$
We use the standard positivity, symmetry, $G$-invariance, integral  representation, and pointwise estimates for the Dunkl--Poisson kernel.  
These  facts follow from the positivity of the Dunkl intertwining operator and the  kernel estimates for Dunkl convolution; 
see R\"osler \cite{RoslerPositivity},  R\"osler--Voit \cite{RoslerVoit}, and Thangavelu--Xu \cite[Section~4]{ThangaveluXu}.

The kernel of $P_t$ has the integral representation
\begin{equation}\label{eq:poisson-kernel-formula}
        p_t(x,y)  :=  \int_{\R^N}   \frac{c_{N,\kappa}t}     {\bigl(t^2+A(x,y,\xi)^2\bigr)^{(\mathbf N+1)/2}}    \,d\lambda_x(\xi),
\end{equation}
where $\lambda_x$ is a probability measure supported in $\co(O(x))$ and
$$
     A(x,y,\xi):=  \bigl(\norm{x}^2+\norm{y}^2-2\ip{y}{\xi}\bigr)^{1/2}.
$$
The normalizing constant is
$$
        c_{N,\kappa}    :=2^{\mathbf N/2}\pi^{-1/2}\Gamma\bigl((\mathbf N+1)/2\bigr) \biggl(\int_{\R^N}e^{-\norm{x}^{2}/2}\,d\omega(x)\biggr)^{-1}.
$$
We use the symmetry
\begin{equation}\label{eq:p-symmetry}
        p_t(x,y)=p_t(y,x)>0,\quad x,y\in\R^N,
\end{equation}
the $G$-invariance
\begin{equation}\label{eq:p-G-invariance}
        p_t(\sigma x,\sigma y)=p_t(x,y),   \qquad \sigma\in G,
\end{equation}
and the following mixed-metric upper estimate
\begin{equation}\label{eq:poisson-upper}
        p_t(x,y)    \lesssim     \frac{t}    {\omega(B(x,t+d(x,y)))+     \omega(B(y,t+d(x,y)))}     \frac1{t+\norm{x-y}}.
\end{equation}
This form is intentional.  The ball terms use the orbit distance $d(x,y)$, while  the last factor uses the Euclidean distance $\norm{x-y}$.  
Since  $\norm{x-y}\ge d(x,y)$, the last factor is finer than the orbit-distance version,   and both forms are used below.
The following elementary inequality will be used several times:
\begin{equation}\label{eq:A-lower-d}
        d(x_0,y_0)  \le   A(y_0,x_0,\xi)     =    \bigl(\norm{y_0}^2-\norm{\xi}^2+\norm{x_0-\xi}^2\bigr)^{1/2},    \qquad \xi\in\co(O(y_0)).
\end{equation}
Indeed, since $\xi$ is a convex combination of points in $O(y_0)$,
$$
        \langle x_0,\xi\rangle     \le   \max_{\sigma\in G}\langle x_0,\sigma y_0\rangle.
$$
Therefore
$$
        A(y_0,x_0,\xi)^2    =\norm{x_0}^2+\norm{y_0}^2-2\langle x_0,\xi\rangle    \ge
        \min_{\sigma\in G}\norm{x_0-\sigma y_0}^2    =d(x_0,y_0)^2.
$$

\section{Chamber lifting}\label{sec:chamber-lifting}

Let $\Wcal$ be as in \eqref{eq:intro-hyperplanes}.  The connected components of
$\R^N\setminus\Wcal$ are the open Weyl chambers.  Fix one of them and denote it
by $\Ccal$.  We write the elements of $G$ as
$
        G=\{\sigma_1,\ldots,\sigma_{|G|}\},
$
and use this notation to label the chamber pieces.  Then
$$
     \R^N\setminus\Wcal  =   \bigsqcup_{\tau=1}^{|G|} \sigma_\tau\Ccal.
$$

\begin{definition}\label{def:wall-null}
Let $\gamma$ be a locally finite positive Borel measure on $\R^N$ and let $\mu$ be a locally finite positive Borel measure on $\R^{N+1}_+$.  
We say that $(\gamma,\mu)$ is wall-null if
$$
        \gamma(\Wcal)=0 \quad \text{and} \quad     \mu(\Wcal\times(0,\infty))=0.
$$
\end{definition}

Throughout this section, the pair $(\gamma,\mu)$ is assumed to be wall-null in the sense of Definition~\ref{def:wall-null}.  
For each chamber index $\tau$, we denote by $\gamma_\tau$ the pullback of $\gamma$ to the fixed chamber $\Ccal$, that is,
$$
        \gamma_\tau(E):=\gamma(\sigma_\tau E)    \quad\text{for every Borel set } E\subseteq\Ccal.
$$
Similarly, for each chamber index $\rho$, we denote by $\mu_\rho$ the pullback of $\mu$ to $\Ccal\times(0,\infty)$,
$$
        \mu_\rho(S):=\mu((\sigma_\rho\times\id)S)  \quad\text{for every Borel set } S\subseteq\Ccal\times(0,\infty).
$$

\begin{lemma}\label{lem:L2-chamber-isometry}
Assume that $(\gamma,\mu)$ is wall-null and $\gamma_\tau,\mu_\rho$ are defined as above.  Define
$$
        (U_\gamma f)_\tau(x):=f(\sigma_\tau x),  \qquad x\in\Ccal,
$$
and
$$
        (V_\mu F)_\rho(x,t):=F(\sigma_\rho x,t),   \qquad (x,t)\in\Ccal\times(0,\infty).
$$
Then $U_\gamma$ and $V_\mu$ induce the following isometric identifications:
$$
        L^2(\R^N,d\gamma)   \simeq   \bigoplus_{\tau=1}^{|G|} L^2(\Ccal,d\gamma_\tau)
$$
and
$$
        L^2(\R^{N+1}_+,d\mu)   \simeq   \bigoplus_{\rho=1}^{|G|}    L^2(\Ccal\times(0,\infty),d\mu_\rho).
$$
\end{lemma}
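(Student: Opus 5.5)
The plan is to reduce both identifications to the disjoint chamber decomposition
$$
\R^N\setminus\Wcal=\bigsqcup_{\tau=1}^{|G|}\sigma_\tau\Ccal,
$$
combined with the change-of-variables formula for pushforward measures. First I will check that each $\sigma_\tau$ is a linear isometry of $\R^N$. It therefore restricts to a homeomorphism $\Ccal\to\sigma_\tau\Ccal$. As a consequence, $\gamma_\tau$ is a well-defined locally finite Borel measure on $\Ccal$; it is precisely the pushforward of $\gamma|_{\sigma_\tau\Ccal}$ under $\sigma_\tau^{-1}$. The same reasoning applies to $\mu_\rho$, using $\sigma_\rho\times\id$ on $\Ccal\times(0,\infty)$. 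The abstract change-of-variables formula then gives, for every non-negative Borel $g$ on $\Ccal$,
$$
\int_\Ccal g\,d\gamma_\tau=\int_{\sigma_\tau\Ccal}g(\sigma_\tau^{-1}z)\,d\gamma(z).
$$
I will verify this first on indicators, which is exactly the definition of $\gamma_\tau$, and then pass to simple functions and monotone limits.

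Next I apply this with $g=|f\circ\sigma_\tau|^2$ and sum over $\tau$. Since $\gamma(\Wcal)=0$ and the chambers $\sigma_\tau\Ccal$ are pairwise disjoint with union $\R^N\setminus\Wcal$, this yields
$$
\sum_{\tau}\norm{(U_\gamma f)_\tau}_{L^2(d\gamma_\tau)}^2=\sum_\tau\int_{\sigma_\tau\Ccal}|f|^2\,d\gamma=\int_{\R^N}|f|^2\,d\gamma.
$$
This shows that $U_\gamma$ is well defined on equivalence classes: a $\gamma$-null modification of $f$ changes each component only on a $\gamma_\tau$-null set. It also shows that $U_\gamma$ is isometric. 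For the target space I run the identical computation with $\mu$. Here the relevant decomposition is
$$
\R^{N+1}_+\setminus(\Wcal\times(0,\infty))=\bigsqcup_\rho(\sigma_\rho\Ccal)\times(0,\infty),
$$
together with the hypothesis $\mu(\Wcal\times(0,\infty))=0$.

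Finally I prove surjectivity by writing down the inverse explicitly. Given $(h_\tau)_\tau\in\bigoplus_\tau L^2(\Ccal,d\gamma_\tau)$, define
$$
f(z):=h_\tau(\sigma_\tau^{-1}z)\quad\text{for } z\in\sigma_\tau\Ccal,\qquad f:=0\ \text{on } \Wcal.
$$
This $f$ is Borel because the chambers are open and pairwise disjoint. By construction $U_\gamma f=(h_\tau)_\tau$, and the norm identity above shows $f\in L^2(d\gamma)$. The same construction works for $V_\mu$. Linearity of both maps is immediate.

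There is no real obstacle here. The only point requiring care is measure-theoretic bookkeeping. First, the wall-null hypothesis is exactly what makes the chamber pieces exhaust the full mass; without it the mass on $\Wcal$ would be lost, or would require extra representatives. Second, I must confirm that the definitions of $\gamma_\tau$ and $\mu_\rho$ agree with the pushforward convention, so that the change of variables has the direction written above.
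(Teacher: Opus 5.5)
Your proposal is correct and follows the paper's proof: split $\R^N\setminus\Wcal$ into the disjoint chambers $\sigma_\tau\Ccal$, use $\gamma(\Wcal)=0$ and the change of variables defining $\gamma_\tau$ to obtain the norm identity, and prove surjectivity by setting $f(\sigma_\tau x)=h_\tau(x)$ with an irrelevant choice on $\Wcal$ (and likewise for $\mu$). Your additional bookkeeping makes explicit what the paper leaves implicit: the pushforward direction, well-definedness on equivalence classes, and Borel measurability of the inverse.
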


\begin{proof}
The open chambers $\sigma_\tau\Ccal$ are pairwise disjoint and cover $\R^N\setminus\Wcal$.  Since $\gamma(\Wcal)=0$,
\begin{align*}
        \norm{f}_{L^2(\R^N,d\gamma)}^2  &=\sum_{\tau=1}^{|G|}\int_{\sigma_\tau\Ccal}|f(x)|^2\,d\gamma(x)        
        =\sum_{\tau=1}^{|G|}\int_\Ccal |f(\sigma_\tau x)|^2\,d\gamma_\tau(x).
\end{align*}
This proves that $U_\gamma$ is an isometry.  
It is onto the direct sum: given $(h_\tau)_\tau$, define $f(\sigma_\tau x)=h_\tau(x)$ on the open chambers and arbitrarily on $\Wcal$.  
The wall-null assumption makes the definition on $\Wcal$ irrelevant.  
Thus the first identification follows.  
The second one is identical, using the partition of the spatial variable in $\R^{N+1}_+$ and the assumption $\mu(\Wcal\times(0,\infty))=0$:
$$
        \norm{F}_{L^2(\R^{N+1}_+,d\mu)}^2  =  \sum_{\rho=1}^{|G|}
        \iint_{\Ccal\times(0,\infty)}|F(\sigma_\rho x,t)|^2\,d\mu_\rho(x,t).
$$
Surjectivity is obtained in the same way by defining $F(\sigma_\rho x,t)$ from an arbitrary chamber vector and ignoring $\Wcal\times(0,\infty)$.
\end{proof}

The following elementary fact is where the chamber geometry enters.

\begin{lemma}\label{lem:chamber-minimization}
Let $\overline\Ccal$ be the closure of the fixed chamber.  If $x,y\in\overline\Ccal$, then
\begin{equation}\label{eq:chamber-minimization}
        \min_{\sigma\in G}\norm{x-\sigma y}=\norm{x-y}.
\end{equation}
Consequently, for all $x,y\in\Ccal$ and all $1\le\rho,\tau\le |G|$,
\begin{equation}\label{eq:chamber-orbit-identity}
        d(\sigma_\rho x,\sigma_\tau y)=\norm{x-y}.
\end{equation}
\end{lemma}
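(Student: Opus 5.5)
The plan has two steps: prove \eqref{eq:chamber-minimization} for points of the closed chamber, then deduce \eqref{eq:chamber-orbit-identity} from $G$-invariance of the orbit distance.

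\textbf{Step 1: reduction to an inner-product inequality.} Since every $\sigma\in G$ is orthogonal, $\norm{\sigma y}=\norm{y}$. Hence
$$
\norm{x-\sigma y}^2=\norm{x}^2+\norm{y}^2-2\ip{x}{\sigma y}.
$$
So \eqref{eq:chamber-minimization} is equivalent to
$$
\ip{x}{\sigma y}\le\ip{x}{y}\qquad\text{for all }\sigma\in G,\ x,y\in\overline\Ccal .
$$

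\textbf{Step 2: the standard fact $y-\sigma y\in\operatorname{cone}(R_+)$.} Let $R_+:=\{\alpha\in R:\ip{\alpha}{z}>0 \text{ for } z\in\Ccal\}$ be the positive system determined by $\Ccal$. Then $\overline\Ccal=\{z:\ip{\alpha}{z}\ge0 \text{ for all }\alpha\in R_+\}$. The claim is that for $y\in\overline\Ccal$ and $\sigma\in G$, the vector $y-\sigma y$ is a non-negative combination of positive roots. Pairing with $x\in\overline\Ccal$ then gives $\ip{x}{y-\sigma y}\ge0$, which is the inequality in Step 1.

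To prove the claim, I would induct on the length of $\sigma$ as a word in the simple reflections. Write $\sigma=\sigma'\sigma_\beta$ with $\beta$ simple and $\ell(\sigma)=\ell(\sigma')+1$. The standard length criterion then gives $\sigma'\beta\in R_+$. Now
$$
y-\sigma y=(y-\sigma' y)+\sigma'(y-\sigma_\beta y)=(y-\sigma'y)+\ip{y}{\beta}\,\sigma'\beta ,
$$
using $\norm{\beta}^2=2$. Here $\ip{y}{\beta}\ge0$ because $y\in\overline\Ccal$, and $\sigma'\beta\in R_+$, so the induction closes.

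The only nontrivial input is this standard Coxeter-group fact, and it is the main obstacle. If a self-contained alternative is preferred, there is a maximization route. Fix $x\in\Ccal$ (the general case $x\in\overline\Ccal$ follows by continuity) and choose $\sigma$ maximizing $\ip{x}{\sigma y}$. If $\sigma y\notin\overline\Ccal$, pick $\alpha\in R_+$ with $\ip{\alpha}{\sigma y}<0$. Then
$$
\ip{x}{\sigma_\alpha\sigma y}=\ip{x}{\sigma y}-\ip{\alpha}{\sigma y}\ip{x}{\alpha}>\ip{x}{\sigma y},
$$
contradicting maximality. So the maximizer satisfies $\sigma y\in\overline\Ccal\cap O(y)$. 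Since $\overline\Ccal$ is a fundamental domain, it meets the orbit $O(y)$ only at $y$. Therefore $\sigma y=y$, and the maximum is $\ip{x}{y}$.

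\textbf{Step 3: the consequence.} By $G$-invariance of the orbit distance, $d(\sigma_\rho x,\sigma_\tau y)=d(x,y)$: replacing $y$ by an orbit-mate does not change the minimum over $G$, and $\norm{\sigma_\rho x-\sigma y}=\norm{x-\sigma_\rho^{-1}\sigma y}$. Step 2 with $x,y\in\Ccal\subseteq\overline\Ccal$ gives $d(x,y)=\norm{x-y}$.
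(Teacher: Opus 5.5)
Your proof is correct and follows the paper's argument. The paper also reduces to $\langle x,y-\sigma y\rangle\ge0$ via the dominance property that $y-\sigma y$ is a non-negative combination of simple roots for $y\in\overline\Ccal$, and it obtains \eqref{eq:chamber-orbit-identity} by the same reindexing $\theta\mapsto\sigma_\rho^{-1}\theta\sigma_\tau$. The only difference is that you prove the dominance fact (your length induction is sound), whereas the paper cites it as standard; note that your ``self-contained'' alternative still relies on the equally standard fact that $\overline\Ccal$ meets each $G$-orbit in exactly one point.
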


\begin{proof}
Let the positive system be the one whose closed chamber is $\overline\Ccal$. For $y\in\overline\Ccal$ and $\sigma\in G$, 
the standard dominance property of finite reflection groups gives that $y-\sigma y$ is a non-negative linear combination of the corresponding simple roots.  
Since $x\in\overline\Ccal$, we have $\langle x,y-\sigma y\rangle\ge0$.  Hence
$$
        \norm{x-\sigma y}^2-\norm{x-y}^2  =2\langle x,y-\sigma y\rangle   \ge0,
$$
which proves \eqref{eq:chamber-minimization}.  For the second assertion,
\begin{align*}
        d(\sigma_\rho x,\sigma_\tau y)
        =\min_{\theta\in G}\norm{\sigma_\rho x-\theta\sigma_\tau y}  
        =\min_{\theta\in G}\norm{x-\sigma_\rho^{-1}\theta\sigma_\tau y} 
        =\min_{\sigma\in G}\norm{x-\sigma y}
        =\norm{x-y}.
\end{align*}
The proof is complete.
\end{proof}

For $1\le \rho,\tau\le |G|$, define the lifted scalar kernel
$$
        K_t^{\rho\tau}(x,y) := p_t(\sigma_\rho x,\sigma_\tau y),   \qquad x,y\in\Ccal,   \quad t>0.
$$
The corresponding scalar operator is
$$
       T_{\rho\tau}h(x,t)  = \int_{\Ccal}K_t^{\rho\tau}(x,y)h(y)\,d\gamma_\tau(y).
$$
Its adjoint relative to $d\gamma_\tau$ and $d\mu_\rho$ is
$$
      T_{\rho\tau}^*G(y) = \iint_{\Ccal\times(0,\infty)}  K_t^{\rho\tau}(x,y)G(x,t)\,d\mu_\rho(x,t).
$$
By Tonelli's theorem and the non-negativity of the kernel,
$$
        \iint_{\Ccal\times(0,\infty)} T_{\rho\tau}h(x,t)G(x,t)\,d\mu_\rho(x,t) =
        \int_\Ccal h(y)T_{\rho\tau}^*G(y)\,d\gamma_\tau(y)
$$
for non-negative Borel functions $h$ and $G$.  
Thus, whenever $T_{\rho\tau}$ is bounded between the two $L^2$ spaces, the displayed formula is its Hilbert space adjoint.

\begin{lemma}\label{lem:matrix-kernel-upper}
For every $1\le\rho,\tau\le |G|$ and every $x,y\in\Ccal$, $t>0$,
\begin{equation}\label{eq:matrix-kernel-upper}
        K_t^{\rho\tau}(x,y) \lesssim  \frac{t} {\omega(B(x,t+\norm{x-y}))+
         \omega(B(y,t+\norm{x-y}))} \,\frac1{t+\norm{x-y}}.
\end{equation}
The implicit constant is independent of $\rho$ and $\tau$.
\end{lemma}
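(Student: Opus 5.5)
The plan is to apply the mixed-metric upper estimate \eqref{eq:poisson-upper} at the points $X:=\sigma_\rho x$ and $Y:=\sigma_\tau y$, and then rewrite every ingredient in the chamber variables $x,y$. By definition $K_t^{\rho\tau}(x,y)=p_t(X,Y)$, so \eqref{eq:poisson-upper} gives
$$
K_t^{\rho\tau}(x,y)\lesssim \frac{t}{\omega(B(X,t+d(X,Y)))+\omega(B(Y,t+d(X,Y)))}\,\frac{1}{t+\norm{X-Y}}.
$$
The implicit constant is the one in \eqref{eq:poisson-upper}. It depends only on $N,R,\kappa$, and in particular not on $\rho$ or $\tau$.

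Next I would convert the three quantities appearing above. First, Lemma~\ref{lem:chamber-minimization}, in the form \eqref{eq:chamber-orbit-identity}, gives $d(X,Y)=\norm{x-y}$, so both balls have radius $r:=t+\norm{x-y}$. Second, $\omega$ is $G$-invariant and $\sigma_\rho$ is an isometry, so $\sigma_\rho B(x,r)=B(\sigma_\rho x,r)$ and hence
$$
\omega(B(X,r))=\omega(B(x,r)).
$$
The same argument gives $\omega(B(Y,r))=\omega(B(y,r))$. Third, for the last factor I use $\norm{X-Y}\ge d(X,Y)=\norm{x-y}$, which yields
$$
\frac{1}{t+\norm{X-Y}}\le\frac{1}{t+\norm{x-y}}.
$$
Substituting these three facts into the first display gives \eqref{eq:matrix-kernel-upper}.

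No real obstacle is expected. The only point needing care is the direction of the last inequality: the Euclidean factor $\norm{X-Y}$ may be much larger than $\norm{x-y}$ when $\rho\ne\tau$. This is harmless because we only need an upper bound, and replacing $\norm{X-Y}$ by the smaller quantity $d(X,Y)$ is exactly the orbit-distance version of \eqref{eq:poisson-upper} noted after that estimate.
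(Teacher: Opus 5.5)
Your proposal is correct and follows the paper's proof essentially verbatim. You apply \eqref{eq:poisson-upper} at $\sigma_\rho x$ and $\sigma_\tau y$, use \eqref{eq:chamber-orbit-identity} for the radii, use $G$-invariance of $\omega$ to move the balls back to $x$ and $y$, and weaken the Euclidean factor via $\norm{\sigma_\rho x-\sigma_\tau y}\ge d(\sigma_\rho x,\sigma_\tau y)=\norm{x-y}$.
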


\begin{proof}
Apply \eqref{eq:poisson-upper} with the full-space variables $u=\sigma_\rho x$ and $v=\sigma_\tau y$.  By Lemma~\ref{lem:chamber-minimization},
$$
        d(\sigma_\rho x,\sigma_\tau y)=\norm{x-y}.
$$
Also $\omega$ is $G$-invariant, so
$$
        \omega(B(\sigma_\rho x,r))=\omega(B(x,r)),  \qquad \omega(B(\sigma_\tau y,r))=\omega(B(y,r)).
$$
Finally,
$$
        \norm{\sigma_\rho x-\sigma_\tau y}  \ge d(\sigma_\rho x,\sigma_\tau y)    =  \norm{x-y}.
$$
Substitution gives the estimate.
\end{proof}

The last step deliberately weakens the sharper factor $1/(t+\|\sigma_\rho x-\sigma_\tau y\|)$ to $1/(t+\|x-y\|)$.  
This removes the chamber indices from the scalar kernel bound; 
the dependence on $(\rho,\tau)$ is then carried only by the chamber measures $\gamma_\tau$ and $\mu_\rho$, 
so the same scalar proof applies to all $|G|^2$ entries.

The chamber lifting now identifies the full operator with the finite matrix of the entries $T_{\rho\tau}$.

\begin{lemma}\label{lem:matrix-representation}
Assume $(\gamma,\mu)$ is wall-null.  For non-negative Borel $f$, for $1\le\rho\le |G|$, and for $(x,t)\in\Ccal\times(0,\infty)$,
\begin{equation}\label{eq:matrix-representation}
        (V_\mu P_\gamma f)_\rho(x,t) =  \sum_{\tau=1}^{|G|} T_{\rho\tau}(U_\gamma f)_\tau(x,t).
\end{equation}
If the scalar entries are bounded on the corresponding $L^2$ spaces, 
the same identity holds $\mu_\rho$-almost everywhere for real-valued $f\in L^2(\R^N,d\gamma)$, 
with both sides read as the difference of the positive and negative parts.
\end{lemma}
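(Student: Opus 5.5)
The identity \eqref{eq:matrix-representation} comes from splitting the integral defining $P_\gamma f$ according to the chamber decomposition and then changing variables chamber by chamber. Fix $\rho$ and $(x,t)\in\Ccal\times(0,\infty)$, and let $f\ge0$ be Borel. By definition,
$$
(V_\mu P_\gamma f)_\rho(x,t)=P_\gamma f(\sigma_\rho x,t)=\int_{\R^N}p_t(\sigma_\rho x,y)f(y)\,d\gamma(y).
$$
Since $\gamma(\Wcal)=0$ and $\R^N\setminus\Wcal=\bigsqcup_\tau\sigma_\tau\Ccal$, countable additivity of the integral of a non-negative function splits this into a sum over $\tau$ of integrals over $\sigma_\tau\Ccal$. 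The sum is finite, all terms lie in $[0,\infty]$, and no convergence issue arises.

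For each $\tau$, I would transport the integral over $\sigma_\tau\Ccal$ to $\Ccal$ using the map $y\mapsto\sigma_\tau y$. This map is a homeomorphism of $\Ccal$ onto $\sigma_\tau\Ccal$, and $\gamma_\tau$ is by definition the pushforward of $\gamma|_{\sigma_\tau\Ccal}$ under $\sigma_\tau^{-1}$. The abstract change-of-variables formula, checked first on indicators and then extended by monotone convergence, gives
$$
\int_{\sigma_\tau\Ccal}g\,d\gamma=\int_\Ccal g(\sigma_\tau y)\,d\gamma_\tau(y)
$$
for every non-negative Borel $g$. Applying this with $g(y)=p_t(\sigma_\rho x,y)f(y)$ yields $\int_\Ccal K^{\rho\tau}_t(x,y)(U_\gamma f)_\tau(y)\,d\gamma_\tau(y)=T_{\rho\tau}(U_\gamma f)_\tau(x,t)$. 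Summing over $\tau$ gives \eqref{eq:matrix-representation} pointwise, with values in $[0,\infty]$.

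For real-valued $f\in L^2(\R^N,d\gamma)$, write $f=f_+-f_-$. Then $U_\gamma f_\pm=(U_\gamma f)_\pm$ componentwise, and Lemma~\ref{lem:L2-chamber-isometry} shows that each component lies in $L^2(\Ccal,d\gamma_\tau)$. If every $T_{\rho\tau}$ is bounded, then $T_{\rho\tau}((U_\gamma f)_\tau)_\pm\in L^2(d\mu_\rho)$, so these quantities are finite $\mu_\rho$-a.e. The positive-part identity therefore shows that $(V_\mu P_\gamma f_\pm)_\rho$ is finite $\mu_\rho$-a.e. Subtracting the two non-negative identities on the common set of finiteness gives the claim almost everywhere. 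This subtraction is the one place needing care: one must avoid $\infty-\infty$, and boundedness of the entries is exactly what guarantees finiteness off a null set. The null sets for the finitely many pairs $(\rho,\tau)$ combine into a single null set.
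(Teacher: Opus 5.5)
Your proposal is correct and follows essentially the same route as the paper: you split $P_\gamma f(\sigma_\rho x,t)$ over the chambers $\sigma_\tau\Ccal$ using $\gamma(\Wcal)=0$, transport each piece to $\Ccal$ via the definition of $\gamma_\tau$, and handle signed $f\in L^2(\R^N,d\gamma)$ by subtracting the identities for $f_\pm$ on the $\mu_\rho$-full set where boundedness of the entries makes everything finite. Your explicit check of the change-of-variables formula $\int_{\sigma_\tau\Ccal}g\,d\gamma=\int_\Ccal g(\sigma_\tau y)\,d\gamma_\tau(y)$ spells out a step the paper leaves implicit.
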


\begin{proof}
Recall that for $(x,t)\in\Ccal\times(0,\infty)$,
$$
        K_t^{\rho\tau}(x,y)=p_t(\sigma_\rho x,\sigma_\tau y), \qquad
        T_{\rho\tau}h(x,t)=\int_\Ccal K_t^{\rho\tau}(x,y)h(y)\,d\gamma_\tau(y).
$$
We first prove the identity for non-negative Borel $f$.

Using the chamber partition and the definitions of $\gamma_\tau$,
\begin{align*}
        P_\gamma f(\sigma_\rho x,t) &=
        \int_{\R^N}p_t(\sigma_\rho x,y)f(y)\,d\gamma(y)  
        =\sum_{\tau=1}^{|G|} \int_{\sigma_\tau\Ccal}   p_t(\sigma_\rho x,y)f(y)\,d\gamma(y)                 \\
        &=  \sum_{\tau=1}^{|G|}  \int_{\Ccal}  p_t(\sigma_\rho x,\sigma_\tau y)f(\sigma_\tau y)  \,d\gamma_\tau(y)                                   
        =  \sum_{\tau=1}^{|G|} T_{\rho\tau}(U_\gamma f)_\tau(x,t),
\end{align*}
where $(x,t)\in\Ccal\times(0,\infty)$.

Now let $f\in L^2(\R^N,d\gamma)$ be real-valued and write $f=f^+-f^-$.  Apply the non-negative identity to $f^+$ and $f^-$.  
In the applications where this signed identity is used, the scalar entries are already known to be bounded on $L^2$, and hence
$T_{\rho\tau}(U_\gamma f^\pm)_\tau$ is finite $\mu_\rho$-almost everywhere. Subtracting the two positive identities gives the displayed identity for $f$. 
Complex-valued functions follow by applying the same argument to real and imaginary parts.
\end{proof}

Before passing to the scalar theorem, we spell out one point about the notation.

\begin{remark}
The maps $U_\gamma$ and $V_\mu$ are liftings to a finite chamber vector. They are not projections to the orbit space.  
We do not assume that the input function or the measures are $G$-invariant; 
the different chamber masses remain visible in the entries $T_{\rho\tau}$.  
Similar chamber or orbit-adapted decompositions appear in Dunkl square-function and commutator estimates \cite{CGLWW-Square,HanLeeLiSawyerWuCalderonSubmitted}.  Here the lifting replaces orbit-mass comparison by finitely many scalar chamber tests.
\end{remark}

\section{The scalar chamber theorem}\label{sec:scalar-chamber}

Throughout this section, the wall-null pair $(\gamma,\mu)$ and the chamber component measures $\gamma_\tau,\mu_\rho$ are as in Section~\ref{sec:chamber-lifting};
we also keep the conventions $Q_\Ccal=Q\cap\Ccal$ and $\widehat Q_\Ccal=Q_\Ccal\times(0,\ell(Q)]$.

Fix one pair $(\rho,\tau)$ with $1\le\rho,\tau\le |G|$.  To lighten notation in this section, write
$$
        K_t(x,y):=K_t^{\rho\tau}(x,y),  \qquad  \nu:=\gamma_\tau,   \qquad  \eta:=\mu_\rho,
$$
and
$$
        Tf(x,t):=\int_{\Ccal}K_t(x,y)f(y)\,d\nu(y),\qquad (x,t)\in\Ccal\times(0,\infty).
$$
The adjoint is
$$
        T^*g(y):=\iint_{\Ccal\times(0,\infty)}K_t(x,y)g(x,t)\,d\eta(x,t),\qquad y\in\Ccal.
$$
The measures $\nu$ and $\eta$ are the chamber pieces of the original locally finite measures.  
Thus $\nu(Q_\Ccal)<\infty$ for every bounded dyadic cube $Q$.  
All adjoint identities are first justified in finite truncations and then passed to the monotone limit.

\begin{theorem}\label{thm:scalar-chamber}
The operator
$$
        T:L^2(\Ccal,d\nu)\to L^2(\Ccal\times(0,\infty),d\eta)
$$
is bounded if and only if the two testing conditions
\begin{equation}\label{eq:scalar-forward-test}
        \iint_{\widehat{3Q}_\Ccal}  \abs{T\one_{Q_\Ccal}(x,t)}^2\,d\eta(x,t)  \le F^2\nu(Q_\Ccal)
\end{equation}
and
\begin{equation}\label{eq:scalar-backward-test}
        \int_{3Q_\Ccal} \abs{T^*(t\one_{\widehat{Q}_\Ccal})(y)}^2\,d\nu(y)
        \le B^2 \iint_{\widehat{Q}_\Ccal}t^2\,d\eta(x,t)
\end{equation}
hold uniformly over dyadic cubes $Q$ with $Q_\Ccal\ne\varnothing$.  Here $F$ and $B$ denote the least admissible constants in these two tests.  More explicitly,
\begin{equation}\label{eq:scalar-forward-supremum}
        F^2 := \sup_{\substack{Q\in\Dcal,\ Q_\Ccal\ne\varnothing,\\
          \nu(Q_\Ccal)>0}}  \frac{  \displaystyle \iint_{\widehat{3Q}_\Ccal}  \abs{T\one_{Q_\Ccal}(x,t)}^2\,d\eta(x,t)}  {\nu(Q_\Ccal)},
\end{equation}
and
\begin{equation}\label{eq:scalar-backward-supremum}
        B^2:=\sup_{\substack{Q\in\Dcal,\ Q_\Ccal\ne\varnothing,\\
        \iint_{\widehat Q_\Ccal}t^2\,d\eta>0}} \frac{\displaystyle
        \int_{3Q_\Ccal} \abs{T^*(t\one_{\widehat{Q}_\Ccal})(y)}^2\,d\nu(y)}  {\displaystyle\iint_{\widehat Q_\Ccal}t^2\,d\eta(x,t)}.
\end{equation}
Cubes with zero denominator are omitted from the corresponding supremum.  
If there are no such cubes, the corresponding supremum is understood to be zero. Moreover,
\begin{equation}\label{eq:scalar-norm-equivalence}
        \norm{T}_{L^2(\Ccal,d\nu)\to L^2(\Ccal\times(0,\infty),d\eta)} \simeq  F+B,
\end{equation}
with constants depending only on $N$, $R$, and $\kappa$. 
\end{theorem}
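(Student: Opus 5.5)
Necessity is the easy direction; sufficiency is a stopping-time argument modeled on Sawyer's Poisson proof \cite{SawyerPoisson}, in the dyadic form of \cite{LaceyDuke,LiWickBessel}.

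\emph{Necessity.} Take $f=\one_{Q_\Ccal}$ in $\norm{Tf}_{L^2(d\eta)}\le\norm{T}\norm{f}_{L^2(d\nu)}$ and restrict the left side to $\widehat{3Q}_\Ccal$; this gives \eqref{eq:scalar-forward-test} with $F\le\norm{T}$. Apply the same argument to the adjoint. By the Tonelli identity, $T^*$ is the Hilbert adjoint with $\norm{T^*}=\norm{T}$. Testing on $g=t\one_{\widehat Q_\Ccal}$ and restricting to $3Q_\Ccal$ gives $B\le\norm{T}$. Both constants are absolute, as the lower comparison in Theorem~A requires.

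\emph{Sufficiency.} By monotone convergence we may take $f\ge0$ bounded with support in a bounded set, and we may truncate $\eta$ to a compact subset of $\Ccal\times(0,\infty)$. Finiteness of $\iint_{\widehat Q_\Ccal}t^2d\eta$ comes from Lemma~\ref{lem:forward-implies-square-finiteness}. Extend $\nu,\eta$ by zero outside $\Ccal$ so that Euclidean dyadic cubes can be used. Decompose the upper half space into Whitney regions $W_Q=Q_\Ccal\times(\ell(Q)/2,\ell(Q)]$, so that $\norm{Tf}^2=\sum_Q\iint_{W_Q}|Tf|^2d\eta$.

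For $(x,t)\in W_Q$, Lemma~\ref{lem:matrix-kernel-upper} together with the doubling of $\omega$ shows that $K_t(x,y)$ is, up to constants, dominated by a sum over ancestors $P\supseteq Q$ of
\[
\frac{\ell(Q)}{\ell(P)}\,\frac{\one_{3P}(y)}{\omega(B(x_P,\ell(P)))}.
\]
The sharper lower comparison is needed for the local part. For $y\in 3Q_\Ccal$ and $x\in Q_\Ccal$, with $t\simeq\ell(Q)$, the kernel is comparable to $1/\omega(B(x,\ell(Q)))$, uniformly in $x$ and $y$ of that Whitney box. I would derive this lower bound from \eqref{eq:poisson-kernel-formula} and \eqref{eq:A-lower-d}: because $\lambda_x$ is supported in $\co(O(x))$, we have $A\lesssim\norm{x}+\norm{y}$. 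This is one of the two ``Dunkl kernel comparisons'' the abstract mentions. The second comparison is the upper dyadic domination, together with the fact that the kernel is essentially constant in $x$ across a Whitney box.

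Next run principal cubes for $f$ with respect to $\nu$. Let $\mathcal F$ be the stopping family on which the averages $\langle f\rangle^\nu_{3Q_\Ccal}$ double. Split $Tf$ on $W_Q$, with $Q$ having stopping parent $\pi Q=F$, into three pieces.
\begin{enumerate}
\item A \emph{local} piece, $T(f\one_{3F_\Ccal})$, with $f$ replaced by its stopping average $\langle f\rangle_F$. Summing over the $Q$ inside the corona of $F$, this is controlled by the forward test on the $3^N$ translates $I_\theta(F)$. The expansion $3F=\bigsqcup_\theta I_\theta(F)$ is the reason for the $\widehat{3Q}$ in \eqref{eq:scalar-forward-test}. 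After summation we get $F^2\sum_F\langle f\rangle_F^2\nu(F_\Ccal)$, and the Carleson embedding for principal cubes bounds this by $\lesssim F^2\norm f^2_{L^2(\nu)}$. The finite packing lemma in the appendix enters here.
\item A \emph{fluctuation} piece, $f-\langle f\rangle_F$ restricted to the stopping children. It is handled the same way by orthogonality across the corona.
\item A \emph{tail} piece, from $y$ outside $3F$. By the kernel domination it is a positive dyadic operator with geometric decay in $\ell(Q)/\ell(P)$. Dualize it against $g\in L^2(\eta)$. Then the quantity $\sum_{Q\subseteq P}\ell(Q)\iint_{W_Q}g\,d\eta$ appears, which is bounded by $\ell(P)^{-1}\iint_{\widehat P_\Ccal}t\,g\,d\eta$. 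This is controlled by the backward test \eqref{eq:scalar-backward-test} through a second stopping-time argument with respect to $t^2d\eta$: a Sawyer-type maximal/Carleson estimate on the tree of $\eta$-stopping cubes.
\end{enumerate}
Every constant depends only on $N$ and on the doubling constant of $\omega$, hence only on $N$, $R$, $\kappa$.

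The main obstacle I expect is the tail/backward step. Converting \eqref{eq:scalar-backward-test}, which tests against the specific function $t\one_{\widehat Q_\Ccal}$, into a bound for arbitrary $g$ requires that $T^*(t\one_{\widehat Q_\Ccal})$ be comparable on $3Q_\Ccal$ to the dyadic tail sum. That comparability again rests on the lower kernel comparison. It has to be checked uniformly in the chamber pair, since $\norm{\sigma_\rho x-\sigma_\tau y}$ may greatly exceed $\norm{x-y}$; I would handle this by using only the orbit-distance form in the ball factors.
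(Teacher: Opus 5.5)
Your necessity argument matches the paper's, with one omission. Before testing $T^*$ on $t\one_{\widehat Q_\Ccal}$ you need this function to lie in $L^2(d\eta)$. The paper gets this, for $\nu\not\equiv0$, from the forward test just obtained, via Lemma~\ref{lem:forward-implies-square-finiteness}.

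The sufficiency argument has a genuine gap at the step you yourself call essential, the local lower comparison. From $\xi\in\co(O(x))$ you only get $A\le\norm{x}+\norm{y}$, hence $K_t(x,y)\gtrsim t\,(t+\norm{x}+\norm{y})^{-\mathbf N-1}$. Moreover, \eqref{eq:A-lower-d} bounds $A$ from below, so it can only give upper bounds for the kernel. Take a cube with $\ell(Q)\ll1$ at distance $\simeq1$ from the origin and away from the walls. There $1/\omega(B(x,\ell(Q)))\simeq\ell(Q)^{-N}$, whereas your bound gives only $\simeq\ell(Q)$, a non-uniform loss of order $\ell(Q)^{N+1}$. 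A genuine two-sided Whitney-box estimate would need an external lower bound for the Dunkl heat or Poisson kernel in terms of the orbit distance (of Dziuba\'nski--Hejna type). That bound is not among the facts recalled in Section~\ref{sec:dunkl-preliminaries}. Your local piece and your conversion of \eqref{eq:scalar-backward-test} into control of the dyadic tail both rest on this comparability, so neither is established.

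Even granting a two-sided bound, the stopping scheme is not yet a proof.
\begin{itemize}
\item Stopping on the non-nested averages $\langle f\rangle^\nu_{3Q_\Ccal}$ does not give Carleson packing for a general $\nu$. The triples of disjoint stopping children can overlap unboundedly.
\item Replacing $f$ by $\langle f\rangle_F$ on $3F_\Ccal$ is not justified, since $3F\setminus F$ lies in other coronas.
\item More seriously, the \emph{fluctuation} piece has no orthogonality for a positive kernel. There $f$ lives on small stopping children and is paired with $g$ on larger boxes in the corona of $F$. This interaction is not controlled by the forward test alone; it is where the backward test must enter, together with a stopping construction on the $g$ side.
\end{itemize}

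The paper avoids kernel lower bounds entirely. It proves the dual estimate $\norm{T^*g}_{L^2(d\nu)}\lesssim(F+B)\norm{g}_{L^2(d\eta)}$ by Sawyer's level-set argument on $\Omega_k=\{T^*g>2^k\}$ with chamber Whitney cubes. It uses only two self-comparisons of the kernel through a common representing measure:
\begin{itemize}
\item the maximum principle of Lemma~\ref{lem:kernel-maximum-comparison}, which, applied with a point $z_Q\in9Q_\Ccal\setminus\Omega_k$, yields the localized lower bound $T^*(g\one_{\widehat{3Q}_\Ccal})\ge2^k$ on $F_k(Q)$;
\item the vertical comparison of Lemma~\ref{lem:vertical-comparison}.
\end{itemize}
The forward test then controls $\alpha_k(Q)$. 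Principal cubes for $g/t$ with respect to $t^2\,d\eta$, the backward test, and the packing estimates control $\beta_k(Q)$.
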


The backward test is interpreted with extended non-negative integrals. 
 If $\nu\equiv0$, the entry is zero and we put $F=B=0$.  Otherwise the forward test forces the local square-finiteness needed for
$T^*(t\one_{\widehat Q_\Ccal})$ to be pointwise meaningful.  
This is the  chamber analogue of the usual two-weight $A_2$ necessity.

\begin{lemma}\label{lem:forward-implies-square-finiteness}
Assume that $\nu\not\equiv0$ and that the forward testing condition \eqref{eq:scalar-forward-test} holds with $F<\infty$.  
Then
\begin{equation}\label{eq:auto-square-local-finiteness}
        \iint_{\widehat H_\Ccal}t^2\,d\eta(x,t)<\infty
\end{equation}
for every dyadic cube $H$ with $H_\Ccal\ne\varnothing$.
\end{lemma}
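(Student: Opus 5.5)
Since $\nu\not\equiv0$, there is a dyadic cube $Q_0$ with $\nu(Q_{0,\Ccal})>0$. Fix $H$ with $H_\Ccal\ne\varnothing$. We will use the forward test on a large dyadic ancestor of $H$ and $Q_0$, together with a \emph{lower} bound for the kernel, to bound $\iint_{\widehat H_\Ccal}t^2\,d\eta$.

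First choose a dyadic cube $Q$ containing both $H$ and $Q_0$ (possible in the standard grid after passing to large ancestors, provided they lie in the same quadrant). If they do not, we replace $Q_0$ by a subcube of a suitable dyadic cube; in any case we get $Q\supseteq H$ with $\nu(Q_\Ccal)\ge\nu(Q_{0,\Ccal})>0$, $\nu(Q_\Ccal)<\infty$ and $\ell(Q)\ge\ell(H)$. Then $\widehat H_\Ccal\subseteq\widehat{3Q}_\Ccal$, since $H_\Ccal\subseteq 3Q_\Ccal$ and the heights satisfy $\ell(H)\le\ell(Q)$. The forward test \eqref{eq:scalar-forward-test} gives
$$
\iint_{\widehat H_\Ccal}|T\one_{Q_\Ccal}|^2\,d\eta\le F^2\nu(Q_\Ccal)<\infty .
$$

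The key step is a pointwise lower bound $T\one_{Q_\Ccal}(x,t)\ge c\,t$ for $(x,t)\in\widehat H_\Ccal$, with $c>0$ depending on $Q$ but not on $(x,t)$. For this we bound the kernel from below using \eqref{eq:poisson-kernel-formula}. For $u=\sigma_\rho x$, $v=\sigma_\tau y$ and $\xi\in\co(O(u))$, we have $\norm{\xi}\le\norm{u}$, so $A(u,v,\xi)\le\norm{u}+\norm{v}$. Since $\lambda_u$ is a probability measure,
$$
p_t(u,v)\ge\frac{c_{N,\kappa}\,t}{\bigl(t^2+(\norm{x}+\norm{y})^2\bigr)^{(\mathbf N+1)/2}}.
$$
Take $x\in H$, $y\in Q$ and $t\le\ell(H)$. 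Then $\norm{x},\norm{y}\le M_Q:=\sup_{z\in 3Q}\norm{z}$, so $K_t(x,y)\ge c_{N,\kappa}\,t\,(\ell(H)^2+4M_Q^2)^{-(\mathbf N+1)/2}$. Integrating in $y$ against $\nu$ over $Q_\Ccal$ gives
$$
T\one_{Q_\Ccal}(x,t)\ge c_Q\,t\,\nu(Q_\Ccal),\qquad c_Q>0.
$$

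Combining the two displays,
$$
c_Q^2\,\nu(Q_\Ccal)^2\iint_{\widehat H_\Ccal}t^2\,d\eta\le F^2\nu(Q_\Ccal),
$$
and dividing by $\nu(Q_\Ccal)^2\in(0,\infty)$ gives \eqref{eq:auto-square-local-finiteness}.

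The only delicate point is the geometric choice of a common dyadic ancestor $Q$, since standard dyadic cubes in different orthants have no common ancestor. If that fails, the plan is to avoid ancestors. Choose instead any dyadic cube $Q\supseteq H$ with $\ell(Q)$ so large that $3Q$ contains $H$; then pick a dyadic cube $Q'$ of the same side length $\ell(Q)$ with $\nu(Q'_\Ccal)>0$, which exists once $\ell(Q)$ is large enough, using the finitely many top-level cubes meeting the support near $Q_0$. The forward test is then applied to $Q'$ on the box $\widehat{3Q'}_\Ccal$, enlarging $\ell(Q')$ until $3Q'\supseteq H$. This works because in each orthant the dyadic cubes of large side length containing a fixed point have $3Q'$ covering any prescribed bounded set. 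The kernel lower bound is uniform on bounded sets, so the argument goes through unchanged.
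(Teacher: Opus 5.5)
Your argument is correct and is essentially the paper's proof: you take a bounded cube of positive $\nu$-mass whose triple box contains $\widehat H_\Ccal$, bound $K_t(x,y)\gtrsim_Q t$ from below using $\norm{\xi}\le\norm{\sigma_\rho x}$ in \eqref{eq:poisson-kernel-formula}, and apply the forward test restricted to $\widehat H_\Ccal$. Your final paragraph's construction handles the orthant issue that the paper's choice ``$Q\supseteq Q_0\cup H$'' glosses over: take $Q'$ to be a large dyadic ancestor of $Q_0$, so that $3Q'\supseteq[-\ell(Q'),\ell(Q'))^N\supseteq H$ once $\ell(Q')$ is large. By contrast, your earlier claim that some dyadic $Q\supseteq H$ has $\nu(Q_\Ccal)>0$ can fail when $H$ and the support of $\nu$ lie in different orthants, so that version should be discarded.
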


\begin{proof}
Since $\nu$ is locally finite and non-zero, choose a bounded dyadic cube $Q_0$ such that
$$
        0<\nu(Q_{0,\Ccal})<\infty .
$$
For the given cube $H$, take a dyadic cube $Q$ with
$$
        Q_0\cup H\subseteq Q,  \qquad   \ell(H)\le\ell(Q).
$$
Then
$$
        \widehat H_\Ccal\subseteq\widehat{3Q}_\Ccal,   \qquad  0<\nu(Q_\Ccal)<\infty .
$$

For $x\in H_\Ccal$, $0<t\le\ell(H)$, and $y\in Q_\Ccal$, all variables stay in a bounded set depending only on $Q$.  In \eqref{eq:poisson-kernel-formula},
$$
        \xi\in\co(O(\sigma_\rho x))    \quad\Longrightarrow\quad \norm{\xi}\le\norm{x},
$$
and so
$$
        t^2+A(\sigma_\rho x,\sigma_\tau y,\xi)^2\le C_Q.
$$
Hence $K_t(x,y)\ge c_Qt$ on $\widehat H_\Ccal\times Q_\Ccal$, and
$$
        T\one_{Q_\Ccal}(x,t)   \ge c_Qt\,\nu(Q_\Ccal),   \qquad (x,t)\in\widehat H_\Ccal.
$$
The forward test for $Q$ gives
$$
        c_Q^2\nu(Q_\Ccal)^2  \iint_{\widehat H_\Ccal}t^2\,d\eta
        \le \iint_{\widehat{3Q}_\Ccal}|T\one_{Q_\Ccal}|^2\,d\eta  \le F^2\nu(Q_\Ccal)<\infty,
$$
which proves \eqref{eq:auto-square-local-finiteness}.
\end{proof}

The necessity of the testing conditions now follows without an additional hypothesis.  
Applying the operator bound to $\one_{Q_\Ccal}$ gives the forward test.

If $\nu\equiv0$, the backward test is trivial.  
Otherwise the forward test just obtained and Lemma~\ref{lem:forward-implies-square-finiteness} give
$$
        t\one_{\widehat Q_\Ccal}\in L^2(\Ccal\times(0,\infty),d\eta).
$$
Applying the dual operator bound to this function gives \eqref{eq:scalar-backward-test}.  Thus
$$
        F+B\lesssim
        \norm{T}_{L^2(\Ccal,d\nu)\to L^2(\Ccal\times(0,\infty),d\eta)}.
$$
The rest of this section proves sufficiency.

\subsection{Two kernel comparisons}

The first comparison is the maximum principle used in the stopping-time proof.

\begin{lemma}\label{lem:kernel-maximum-comparison}
There is a constant $C_*$ with the following property.  
Let $Q$ be a dyadic cube with $Q_\Ccal\ne\varnothing$, let $y\in Q_\Ccal$, and let $z\in 9Q_\Ccal$.  
If $(x,t)\notin\widehat{3Q}_\Ccal$, then
$$
        K_t(x,y)\le C_*K_t(x,z).
$$
The constant is independent of $Q,x,y,z,t$.
\end{lemma}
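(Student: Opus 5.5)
The plan is to compare the two kernels pointwise inside the integral representation \eqref{eq:poisson-kernel-formula}, rather than through the upper bound \eqref{eq:matrix-kernel-upper} alone. That upper bound gives no matching lower bound, so it cannot yield a comparison by itself. Put $u:=\sigma_\rho x$, $v_y:=\sigma_\tau y$, $v_z:=\sigma_\tau z$. Then
$$
K_t(x,y)=\int \frac{c_{N,\kappa}\,t}{\bigl(t^2+A(u,v_y,\xi)^2\bigr)^{(\mathbf N+1)/2}}\,d\lambda_u(\xi),
$$
and $K_t(x,z)$ is the same integral with $v_z$ in place of $v_y$. The key point is that the probability measure $\lambda_u$ depends only on the first variable $u$. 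It is therefore the same measure in both integrals, so it suffices to prove, uniformly in $\xi\in\co(O(u))$,
$$
t+A(u,v_z,\xi)\le C\,\bigl(t+A(u,v_y,\xi)\bigr).
$$
Since $(t^2+A^2)^{1/2}\simeq t+A$, this gives the lemma with $C_*\simeq C^{\mathbf N+1}$, which depends only on $N$ and $\kappa$.

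\emph{Step 1 (Lipschitz property).} Write
$$
A(u,v,\xi)^2=\norm{v-\xi}^2+\bigl(\norm{u}^2-\norm{\xi}^2\bigr).
$$
For $\xi\in\co(O(u))$ we have $\norm{\xi}\le\norm{u}$, so the second term is a non-negative constant $c_\xi$ independent of $v$. Hence $v\mapsto A(u,v,\xi)=\bigl(\norm{v-\xi}^2+c_\xi\bigr)^{1/2}$ is $1$-Lipschitz. Since $\sigma_\tau$ is an isometry,
$$
A(u,v_z,\xi)\le A(u,v_y,\xi)+\norm{y-z}\le A(u,v_y,\xi)+c_N\,\ell(Q),
$$
where $c_N\simeq 5\sqrt N$ because $y\in Q$ and $z\in 9Q$.

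\emph{Step 2 (lower bound $t+A(u,v_y,\xi)\ge\ell(Q)$ off the box).} Since $x\in\Ccal$ and $(x,t)\notin\widehat{3Q}_\Ccal$, either $t>\ell(Q)$ or $x\notin 3Q$. In the first case the bound is immediate. In the second case, $y\in Q$ and $x\notin3Q$ give $\norm{x-y}\ge\ell(Q)$, because $Q$ lies at distance $\ell(Q)$ from the complement of $3Q$. Then \eqref{eq:A-lower-d}, applied with $y_0=u$ and $x_0=v_y$, together with Lemma~\ref{lem:chamber-minimization}, gives
$$
A(u,v_y,\xi)\ge d(u,v_y)=\norm{x-y}\ge\ell(Q).
$$

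Combining the two steps,
$$
t+A(u,v_z,\xi)\le t+A(u,v_y,\xi)+c_N\ell(Q)\le(1+c_N)\bigl(t+A(u,v_y,\xi)\bigr).
$$
Integrating this pointwise inequality against $\lambda_u$ finishes the proof, with a constant independent of $Q,x,y,z,t$ and of $(\rho,\tau)$.

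The main obstacle is seeing that the comparison must be made inside the integral with the \emph{shared} measure $\lambda_u$. This is why the varying point is taken in the second slot, which is legitimate by the symmetry \eqref{eq:p-symmetry}. Once that is arranged, the remaining ingredients are the $1$-Lipschitz property of $A$ in $v$ and the chamber identity of Lemma~\ref{lem:chamber-minimization}, which converts the orbit bound \eqref{eq:A-lower-d} into the Euclidean separation $\norm{x-y}\ge\ell(Q)$.
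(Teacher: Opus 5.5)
Your proof is correct and follows essentially the paper's argument. Both use the shared representing measure $\lambda_u$, the $1$-Lipschitz dependence of $A(u,\cdot,\xi)$, and the lower bound $A(u,v_y,\xi)\ge d(u,v_y)=\norm{x-y}\gtrsim\ell(Q)$ from \eqref{eq:A-lower-d} and the chamber identity when $x\notin 3Q$, with $t>\ell(Q)$ otherwise; you merge the two cases into the single bound $t+A(u,v_y,\xi)\ge\ell(Q)$, and your appeal to the symmetry \eqref{eq:p-symmetry} is unnecessary, since $u$ already occupies the first slot in both $K_t(x,y)=p_t(u,v_y)$ and $K_t(x,z)=p_t(u,v_z)$.
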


\begin{proof}
Put
$$
        u:=\sigma_\rho x,   \qquad  v:=\sigma_\tau y, \qquad   w:=\sigma_\tau z.
$$
We compare the two kernels by using the integral representation with the same representing measure $\lambda_u$:
$$
        K_t(x,y)=p_t(u,v),   \qquad   K_t(x,z)=p_t(u,w).
$$
For $\xi\in\co(O(u))$ we have $\norm{\xi}\le\norm{u}$, because $\co(O(u))$ is the convex hull of points all having norm $\norm{u}$.  Hence
$$
        A(u,a,\xi)^2   =\norm{a-\xi}^2+\norm{u}^2-\norm{\xi}^2,    \qquad a\in\R^N,
$$
and the map $a\mapsto A(u,a,\xi)$ is $1$-Lipschitz.  Therefore
\begin{equation}\label{eq:A-Lipschitz-yz}
        A(u,w,\xi)   \le  A(u,v,\xi)+\norm{v-w}.
\end{equation}
Since $y\in Q$ and $z\in9Q$, orthogonality of $\sigma_\tau$ gives
\begin{equation}\label{eq:yz-size}
        \norm{v-w}=\norm{y-z}\le C_N\ell(Q).
\end{equation}

First assume $x\notin3Q$.  Since $y\in Q$, we have
$$
        \norm{x-y}\ge c_N\ell(Q).
$$
By the chamber identity,
$$
        d(u,v)=d(\sigma_\rho x,\sigma_\tau y)=\norm{x-y}\ge c_N\ell(Q).
$$
Applying \eqref{eq:A-lower-d} with $y_0=u$ and $x_0=v$ gives
$$
        A(u,v,\xi)\ge d(u,v)\ge c_N\ell(Q),   \qquad \xi\in\co(O(u)).
$$
Together with \eqref{eq:A-Lipschitz-yz} and \eqref{eq:yz-size}, this gives
$$
        A(u,w,\xi)  \le C A(u,v,\xi).
$$
Consequently
$$
        t^2+A(u,w,\xi)^2   \le C\bigl(t^2+A(u,v,\xi)^2\bigr).
$$
Substituting this estimate into the integral formula \eqref{eq:poisson-kernel-formula} gives  $K_t(x,y)\le C K_t(x,z)$.

It remains to consider the case $x\in3Q$ and $(x,t)\notin\widehat{3Q}_\Ccal$.  
Then necessarily $t>\ell(Q)$.  By \eqref{eq:A-Lipschitz-yz} and \eqref{eq:yz-size},
$$
        A(u,w,\xi) \le A(u,v,\xi)+C_N\ell(Q)  \le A(u,v,\xi)+C_Nt.
$$
Thus
$$
        t^2+A(u,w,\xi)^2  \le C\bigl(t^2+A(u,v,\xi)^2\bigr),
$$
and the same substitution into \eqref{eq:poisson-kernel-formula} proves the comparison.
\end{proof}

The second comparison is the local vertical estimate used in the stopping-time part.  
The reference point must lie in the chamber.  A dyadic cube may cross a wall, and then its Euclidean centre may lie outside $\Ccal$.  
We therefore use an arbitrary point of $Q_\Ccal$ instead of the centre.

\begin{lemma}\label{lem:vertical-comparison}
Let $Q$ be a dyadic cube with $Q_\Ccal\ne\varnothing$.  Choose once and for all a point
$
        x_Q^\Ccal\in Q_\Ccal.
$
Let $E\subseteq\Ccal$ be a Borel set satisfying
$
        E\cap 3Q_\Ccal=\varnothing.
$
If $(x,t)\in\widehat{Q}_\Ccal$, then
\begin{equation}\label{eq:vertical-comparison}
        T\one_E(x,t)   \simeq   \frac{t}{\ell(Q)} T\one_E(x_Q^\Ccal,\ell(Q)).
\end{equation}
The constants are independent of $E,Q,x,t$ and of the chosen point $x_Q^\Ccal$.
\end{lemma}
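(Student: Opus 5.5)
Since $T\one_E(x,t)=\int_E K_t(x,y)\,d\nu(y)$ with a non-negative kernel, it suffices to prove the pointwise comparison
$$
        K_t(x,y)\simeq \frac{t}{\ell(Q)}\,K_{\ell(Q)}(x_Q^\Ccal,y),\qquad (x,t)\in\widehat Q_\Ccal,\ y\in\Ccal\setminus 3Q_\Ccal,
$$
with structural constants, and then integrate over $y\in E$ against $\nu$. I will work directly with the integral representation \eqref{eq:poisson-kernel-formula}, as in Lemma~\ref{lem:kernel-maximum-comparison}, but now with the roles reversed: the fixed variable is $y$, and the two points compared are $x$ and $x_Q^\Ccal$. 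By the symmetry \eqref{eq:p-symmetry},
$$
K_t(x,y)=p_t(v,u),\qquad v:=\sigma_\tau y,\ u:=\sigma_\rho x,\ u_0:=\sigma_\rho x_Q^\Ccal .
$$
Thus both kernels use the same representing measure $\lambda_v$, and only $u$ varies inside $A(v,\cdot,\xi)$. As in Lemma~\ref{lem:kernel-maximum-comparison}, for $\xi\in\co(O(v))$ the map $a\mapsto A(v,a,\xi)$ is $1$-Lipschitz, since $A(v,a,\xi)^2=\norm{a-\xi}^2+\norm{v}^2-\norm{\xi}^2$ with $\norm{\xi}\le\norm{v}$.

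The key steps are as follows.

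\emph{Step 1.} Since $y\notin 3Q$ and $x,x_Q^\Ccal\in Q$, we have $\norm{x-y}\ge c_N\ell(Q)$ and $\norm{x_Q^\Ccal-y}\ge c_N\ell(Q)$. By Lemma~\ref{lem:chamber-minimization} and \eqref{eq:A-lower-d},
$$
A(v,u,\xi)\ge d(u,v)=\norm{x-y}\ge c_N\ell(Q),\qquad\xi\in\co(O(v)),
$$
and the same lower bound holds with $u_0$ in place of $u$.

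\emph{Step 2.} Since $\norm{u-u_0}=\norm{x-x_Q^\Ccal}\le\sqrt N\ell(Q)$, the Lipschitz property together with Step 1 gives
$$
A(v,u,\xi)\simeq A(v,u_0,\xi)\gtrsim \ell(Q),
$$
uniformly in $\xi$.

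\emph{Step 3.} Because $0<t\le\ell(Q)\lesssim A$, we get
$$
t^2+A(v,u,\xi)^2\simeq A(v,u_0,\xi)^2\simeq \ell(Q)^2+A(v,u_0,\xi)^2 .
$$
Inserting this into \eqref{eq:poisson-kernel-formula} and raising to the power $-(\mathbf N+1)/2$ yields
$$
K_t(x,y)\simeq t\int\bigl(\ell(Q)^2+A(v,u_0,\xi)^2\bigr)^{-(\mathbf N+1)/2}d\lambda_v(\xi)=\frac{t}{\ell(Q)}K_{\ell(Q)}(x_Q^\Ccal,y).
$$
The constants depend only on $N$ and $\mathbf N$.

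The point I expect to require care is the choice of representing measure. If one used $\lambda_u$, as in the forward maximum-principle lemma, the measure would change with $x$, and no comparison would be available. Symmetry of $p_t$ is exactly what lets us freeze $\lambda_v$. A second point is that the lower bound $A\gtrsim\ell(Q)$ must hold for \emph{every} $\xi$ in the support, not merely at the nearest orbit point. This is supplied by \eqref{eq:A-lower-d} combined with the chamber identity: because $x,y\in\Ccal$, the orbit distance equals the Euclidean distance $\norm{x-y}$, which is large. This is also why $x_Q^\Ccal$ must lie in $\Ccal$ rather than being the Euclidean centre of $Q$.
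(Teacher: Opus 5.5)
Your proposal is correct and follows essentially the same route as the paper. Both arguments use the symmetry of $p_t$ to freeze the representing measure $\lambda_{\sigma_\tau y}$, and get the uniform lower bound $A\gtrsim\ell(Q)$ for both $x$ and $x_Q^\Ccal$ from \eqref{eq:A-lower-d} together with the chamber identity. They then apply the Lipschitz/triangle comparison $A(\sigma_\tau y,\sigma_\rho x,\xi)\simeq A(\sigma_\tau y,\sigma_\rho x_Q^\Ccal,\xi)$, use linearity of the numerator in $t$, and integrate over $E$.
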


\begin{proof}
Let $y\in E$ and $\sigma_\rho,\sigma_\tau\in G$.  Since
$$
        x\in Q_\Ccal,  \qquad   y\notin 3Q_\Ccal,   \qquad    t\le\ell(Q),
$$
we have
$$
        \norm{x-y}\gtrsim \ell(Q),   \qquad  d(\sigma_\rho x,\sigma_\tau y)=\norm{x-y}\gtrsim\ell(Q),
$$
where the second identity is \eqref{eq:chamber-orbit-identity}.

Use the symmetry \eqref{eq:p-symmetry} of the kernel in the form
$$
        K_t(x,y)=p_t(\sigma_\rho x,\sigma_\tau y)    =p_t(\sigma_\tau y,\sigma_\rho x).
$$
Both kernels now use the same representing measure  $\lambda_{\sigma_\tau y}$ in \eqref{eq:poisson-kernel-formula}.

Applying \eqref{eq:A-lower-d} with first variable $\sigma_\tau y$ and second  variable $\sigma_\rho x$ gives, 
for every $\xi\in\co(O(\sigma_\tau y))$,
$$
        A(\sigma_\tau y,\sigma_\rho x,\xi)  \gtrsim \ell(Q).
$$
The same argument with $x_Q^\Ccal$ in place of $x$ gives
$$
        A(\sigma_\tau y,\sigma_\rho x_Q^\Ccal,\xi)   \gtrsim \ell(Q),  \qquad \xi\in\co(O(\sigma_\tau y)).
$$
Indeed, $x_Q^\Ccal\in Q_\Ccal$ and $y\notin3Q_\Ccal$, so $\|x_Q^\Ccal-y\|\gtrsim\ell(Q)$.  
Since both $x$ and $x_Q^\Ccal$ lie in $Q$, we have
$$
        \norm{x-x_Q^\Ccal}\lesssim \ell(Q).
$$
Thus the triangle inequality gives both estimates
$$
        A(\sigma_\tau y,\sigma_\rho x_Q^\Ccal,\xi)  \le
        A(\sigma_\tau y,\sigma_\rho x,\xi)+C\ell(Q)  \lesssim A(\sigma_\tau y,\sigma_\rho x,\xi)
$$
and
$$
        A(\sigma_\tau y,\sigma_\rho x,\xi)   \le
        A(\sigma_\tau y,\sigma_\rho x_Q^\Ccal,\xi)+C\ell(Q)  \lesssim  A(\sigma_\tau y,\sigma_\rho x_Q^\Ccal,\xi).
$$
Therefore
$$
        t^2+A(\sigma_\tau y,\sigma_\rho x,\xi)^2  \simeq   \ell(Q)^2+A(\sigma_\tau y,\sigma_\rho x_Q^\Ccal,\xi)^2.
$$
Using the Poisson kernel formula and the fact that the numerator is linear in the height variable, we obtain the pointwise comparison
$$
        K_t(x,y)  \simeq  \frac{t}{\ell(Q)}K_{\ell(Q)}(x_Q^\Ccal,y).
$$
Integrating this estimate over $y\in E$ proves \eqref{eq:vertical-comparison}.
\end{proof}

\subsection{Whitney decomposition inside the chamber}

We use ordinary dyadic cubes in $\R^N$, but all sets are intersected with $\Ccal$.

\begin{lemma}\label{lem:whitney-chamber}
Let $\Omega\subseteq\Ccal$ be relatively open and not equal to $\Ccal$.  There is a collection $\mathcal I(\Omega)\subseteq\Dcal$ of maximal dyadic cubes such that
\begin{equation}\label{eq:whitney-def}
        3Q_\Ccal\subseteq\Omega,  \qquad   9Q_\Ccal\cap(\Ccal\setminus\Omega)\ne\varnothing.
\end{equation}
Moreover:
\begin{enumerate}
\item $\Omega=\bigsqcup_{Q\in\mathcal I(\Omega)}Q_\Ccal$ with the half-open dyadic convention;

\smallskip

\item the cubes $Q$ in $\mathcal I(\Omega)$ are pairwise disjoint;

\smallskip
\item for every fixed $a>1$,
$$
        \sum_{Q\in\mathcal I(\Omega)}\one_{aQ_\Ccal}  \le C_{a,N}
$$
pointwise on $\Ccal$; in particular,
$$
        \sum_{Q\in\mathcal I(\Omega)}\one_{3Q_\Ccal}    \le C_N\one_\Omega;
$$

\item if $\Omega$ and $\Omega'$ belong to a nested family of relatively open subsets of $\Ccal$,  and if $Q\in\mathcal I(\Omega)$ and $Q'\in\mathcal I(\Omega')$ with $Q\subsetneq Q'$, then $\Omega\subseteq\Omega'$.
\end{enumerate}
\end{lemma}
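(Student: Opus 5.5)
We define $\mathcal I(\Omega)$ as the set of maximal dyadic cubes $Q$ with $Q_\Ccal\ne\varnothing$ and $3Q_\Ccal\subseteq\Omega$. The condition $\Omega\ne\Ccal$ means $\Ccal\setminus\Omega$ is nonempty, so no such cube can be arbitrarily large. Indeed, once $\ell(Q)$ exceeds a multiple of the distance from $Q_\Ccal$ to a fixed point of $\Ccal\setminus\Omega$, the set $3Q_\Ccal$ meets the complement. Hence every admissible cube lies in a maximal one, and any two maximal cubes are disjoint by nestedness of dyadic cubes, which gives (2).

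For the covering in (1), take $x\in\Omega$. Since $\Omega$ is relatively open, a ball $B(x,r)\cap\Ccal$ lies in $\Omega$. Any dyadic cube $Q\ni x$ with $\ell(Q)\lesssim_N r$ then satisfies $3Q_\Ccal\subseteq\Omega$, so $x$ belongs to some maximal cube. Conversely $Q_\Ccal\subseteq3Q_\Ccal\subseteq\Omega$. Disjointness of the cubes then gives the disjoint union $\Omega=\bigsqcup Q_\Ccal$.

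The second condition in \eqref{eq:whitney-def} follows from maximality. Let $\widetilde Q$ be the dyadic parent of $Q$, so $\widetilde Q_\Ccal\supseteq Q_\Ccal\ne\varnothing$. By maximality $3\widetilde Q_\Ccal\not\subseteq\Omega$. Since $3\widetilde Q\subseteq 5Q\subseteq9Q$, the set $9Q_\Ccal$ meets $\Ccal\setminus\Omega$.

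Property (3) is the main step, via the usual comparability argument. Fix $a>1$ and $z\in\Ccal$ with $z\in aQ\cap aQ'$ for $Q,Q'\in\mathcal I(\Omega)$.
\begin{itemize}
\item[(i)] Since $9Q_\Ccal$ meets $\Ccal\setminus\Omega$, we get $\dist(Q_\Ccal,\Ccal\setminus\Omega)\lesssim\ell(Q)$.
\item[(ii)] Since $3Q_\Ccal\subseteq\Omega$, any point of $Q_\Ccal$ lies at distance at least $\ell(Q)$ from $\Ccal\setminus\Omega$. This uses the half-open convention and $Q_\Ccal\subseteq Q$, so that $3Q$ contains the $\ell(Q)$-neighbourhood of $Q$.
\end{itemize}
One subtlety is that $aQ$ may meet $\Ccal$ far from $Q_\Ccal$. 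However $\norm{z-q}\le C_a\ell(Q)$ for any $q\in Q_\Ccal$, so $\dist(z,\Ccal\setminus\Omega)$ lies between $\ell(Q)-C_a\ell(Q)$ and $C\ell(Q)$. The lower bound is useless when $a$ is large, so I would argue differently. Suppose $\ell(Q')\ge 2^{k}\ell(Q)$ with $k$ large depending on $a$. The $\ell(Q')$-neighbourhood of $Q'_\Ccal$ intersected with $\Ccal$ lies in $\Omega$, and it contains $9Q_\Ccal$ because $Q$ is within distance $C_a\ell(Q)$ of $aQ'$. To make this work one needs $z$ close to $Q'$ in units of $\ell(Q')$, which holds since $z\in aQ'$. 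I expect to enlarge the neighbourhood appropriately, that is, compare against $3Q'$ only near $Q'$, and handle the case $z\in aQ'\setminus 3Q'$ through $\dist(Q',Q)$. If this becomes delicate, I would replace the defining factor $3$ by a larger constant only in this estimate; in any case $\ell(Q)\simeq_a\ell(Q')$ should follow. Given comparability, the cubes with $z\in aQ$ have $\ell(Q)$ comparable to a fixed scale and are disjoint cubes within distance $C_a\ell$ of $z$, so there are at most $C_{a,N}$ of them. The second inequality in (3) is the case $a=3$, using $3Q_\Ccal\subseteq\Omega$.

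For (4), nestedness means either $\Omega\subseteq\Omega'$ or $\Omega'\subsetneq\Omega$. Suppose the second holds and $Q\subsetneq Q'$. Then $3Q'_\Ccal\subseteq\Omega'\subseteq\Omega$, and $Q'_\Ccal\supseteq Q_\Ccal\ne\varnothing$, so $Q'$ is admissible for $\Omega$. This contradicts the maximality of $Q$, so $\Omega\subseteq\Omega'$.
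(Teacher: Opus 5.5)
Apart from (3), your argument is the paper's. You use maximal cubes with $Q_\Ccal\neq\varnothing$ and $3Q_\Ccal\subseteq\Omega$, get disjointness and covering from maximality and relative openness, get the second condition in \eqref{eq:whitney-def} from failure at the parent, and prove (4) by the same contradiction. There is one small slip: $3\widetilde Q\subseteq 5Q$ is false. For $Q=[0,1)$ and $\widetilde Q=[0,2)$ one has $3\widetilde Q=[-2,4)$ but $5Q=[-2,3)$. The correct inclusion is $3\widetilde Q\subseteq 7Q\subseteq 9Q$, so your conclusion survives.

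The genuine gap is (3). You never establish $\ell(Q)\simeq_a\ell(Q')$ for large $a$, and no argument can: with $3Q_\Ccal\subseteq\Omega$ as the selection rule, the overlap bound is false for every $a\ge 3$. Take $N=1$, $\Ccal=(0,\infty)$, $\Omega=\Ccal\setminus\{1\}$, and $Q_j=[1-2^{1-j},1-2^{-j})$ for $j\ge 2$.
\begin{itemize}
\item Each $3Q_j=[1-3\cdot 2^{-j},1)$ lies in $\Omega$, while the parent $[1-2^{1-j},1)$ has triple $[1-2^{2-j},1+2^{1-j})\ni 1$. So every $Q_j$ belongs to $\mathcal I(\Omega)$.
\item Each $z\in(1-3\cdot 2^{-J},1)$ lies in $3Q_j$ for all $2\le j\le J$. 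Hence $\sum_{Q\in\mathcal I(\Omega)}\one_{3Q_\Ccal}$ is unbounded on $\Omega$.
\item For $a>3$, the point $1\in\Ccal$ lies in every $aQ_j$.
\end{itemize}

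What your observations (i)--(ii) actually prove is the range $1<a<3$. Since $\Ccal\setminus\Omega$ misses all of $3Q$, there is no need to pass through $Q_\Ccal$. Every $z\in aQ\cap\Ccal$ then satisfies $\tfrac{3-a}{2}\ell(Q)\le \dist_\infty(z,\Ccal\setminus\Omega)\le\tfrac{9+a}{2}\ell(Q)$. This gives comparability of side lengths and bounded overlap, with a constant that blows up as $a\uparrow 3$.

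Your fallback of enlarging the selection constant changes $\mathcal I(\Omega)$, so it does not prove the lemma as stated. The paper's own justification of (3) fails on the same example: it claims that a dyadic ancestor of the much smaller cube would remain admissible. So the statement itself needs repair, for instance by restricting to $a<3$, or by selecting Whitney cubes through a larger dilate than the one whose overlap is needed. This matters because the $a=3$ case is exactly what Lemma~\ref{lem:level-box-overlap} invokes.
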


\begin{proof}
For each $x\in\Omega$ choose a small dyadic cube $Q$ containing $x$ with $3Q_\Ccal\subseteq\Omega$.   
This is possible because $\Omega$ is relatively open in $\Ccal$.  
Enlarge $Q$ dyadically while the same condition remains true. 
The enlargement stops: since $\Omega\ne\Ccal$, large ancestors eventually meet $\Ccal\setminus\Omega$ after dilation.

The maximal cubes obtained in this way satisfy \eqref{eq:whitney-def}.  
With the half-open dyadic convention they cover $\Omega$ exactly, and two distinct maximal cubes are disjoint.

The bounded overlap is the standard dyadic Whitney overlap estimate.  If $aQ$ and $aQ'$ meet and $Q,Q'\in\mathcal I(\Omega)$, then
$$
        \ell(Q)\simeq_{a,N}\ell(Q').
$$
Indeed, if two cubes $Q,Q'\in\mathcal W(\Omega)$ have intersecting fixed dilates and $\ell(Q)$ is much smaller than $\ell(Q')$, then a dyadic ancestor of $Q$
would still be contained in $\Omega$ and would still satisfy the Whitney condition, contradicting the maximality of $Q$. 
Hence all Whitney cubes whose fixed dilates meet have comparable sidelengths. 
Since the Whitney cubes themselves are pairwise disjoint, these fixed dilates have uniformly bounded overlap; 
equivalently, at most a uniformly bounded number of them can contain  any fixed point.

For the last assertion, the nesting assumption gives two alternatives:
$$
        \Omega\subseteq\Omega'  \quad\text{or}\quad   \Omega'\subseteq\Omega.
$$
The second is impossible.  If $\Omega'\subseteq\Omega$, then $3Q'_\Ccal\subseteq\Omega$, 
contradicting the maximality of $Q$ in $\mathcal I(\Omega)$ because $Q\subsetneq Q'$.  
Hence $\Omega\subseteq\Omega'$.
\end{proof}

We need the following continuity property of $T^*$.

\begin{lemma}\label{lem:level-sets-proper}
Let $g:\Ccal\times(0,\infty)\to[0,\infty)$ be bounded and supported on a compact subset of $\Ccal\times(0,\infty)$; equivalently,
$$
        \supp\,g\subseteq K\times[a,b], \qquad \dist(K,\partial\Ccal)>0,   \qquad 0<a<b<\infty,
$$
for some compact $K\subseteq\Ccal$.  Then $T^*g$ is finite and continuous on $\Ccal$, and
$$
        T^*g(y)\longrightarrow0, \qquad \norm{y}\to\infty,\quad y\in\Ccal.
$$
Consequently every level set 
\begin{equation}\label{eq:level-sets-Omega}
     \Omega_k=\{y\in\Ccal:\,T^*g(y)>2^k\},
\end{equation}
with $k\in\Z$ is relatively open, bounded, and proper.
\end{lemma}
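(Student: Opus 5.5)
Since $g\ge0$ is bounded and supported in $K\times[a,b]$, with $\dist(K,\partial\Ccal)>0$ and $0<a<b<\infty$, all three assertions reduce to properties of the kernel $K_t(x,y)$ on the compact set $K\times[a,b]$:
$$
T^*g(y)=\iint_{K\times[a,b]}K_t(x,y)g(x,t)\,d\eta(x,t).
$$
First, $\eta(K\times[a,b])<\infty$: the set $(\sigma_\rho K)\times[a,b]$ is compact in $\R^{N+1}_+$ and $\mu$ is locally finite. Finiteness then follows from a uniform bound on the kernel. By Lemma~\ref{lem:matrix-kernel-upper}, or directly from \eqref{eq:poisson-kernel-formula}, using $t\ge a$ and $\lambda_u$ a probability measure, we get $K_t(x,y)\le c_{N,\kappa}\,t\cdot t^{-(\mathbf N+1)}\le c_{N,\kappa}a^{-\mathbf N}$. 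Hence $T^*g(y)\le c\,a^{-\mathbf N}\norm{g}_\infty\,\eta(K\times[a,b])<\infty$ for every $y$.

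For continuity, fix $y_0\in\Ccal$ and let $y\to y_0$. The integrand is continuous in $y$ for fixed $(x,t)$. Indeed, by symmetry \eqref{eq:p-symmetry}, $K_t(x,y)=p_t(\sigma_\rho x,\sigma_\tau y)$ is written with the representing measure $\lambda_{\sigma_\rho x}$, which does not depend on $y$. The integrand $c\,t\,(t^2+A(\sigma_\rho x,\sigma_\tau y,\xi)^2)^{-(\mathbf N+1)/2}$ is continuous in $y$ and bounded by $c\,a^{-\mathbf N}$, because $A^2\ge0$. Two applications of dominated convergence (first in $\xi$, then in $(x,t)$) give continuity of $T^*g$. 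This is where the representation with the fixed first variable matters: there is no need to know continuity of $x\mapsto\lambda_x$.

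For decay, use \eqref{eq:A-lower-d} together with Lemma~\ref{lem:chamber-minimization}. For $\xi\in\co(O(\sigma_\rho x))$ we have $A(\sigma_\rho x,\sigma_\tau y,\xi)\ge d(\sigma_\rho x,\sigma_\tau y)=\norm{x-y}\ge\norm{y}-\sup_K\norm{x}$. So as $\norm{y}\to\infty$ we get uniformly $K_t(x,y)\le c\,b\,(\norm{y}-M)^{-(\mathbf N+1)}\to0$, where $M:=\sup_K\norm{x}$. Multiplying by $\norm g_\infty\eta(K\times[a,b])$ gives $T^*g(y)\to0$.

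Consequences for $\Omega_k$. Relative openness follows from continuity. Boundedness follows from decay: there is $R_k$ with $T^*g<2^k$ for $\norm y>R_k$. Properness, meaning $\Omega_k\ne\Ccal$, follows because $\Ccal$ is an unbounded cone while $\Omega_k$ is bounded. The one delicate point I expect is the continuity step. In particular, I must make sure that the representation \eqref{eq:poisson-kernel-formula} is applied with the variable that is held fixed as the base point of $\lambda$. After that, the argument is routine dominated convergence. Note that the hypothesis $\dist(K,\partial\Ccal)>0$ is not really needed beyond compactness of $K\subseteq\Ccal$.
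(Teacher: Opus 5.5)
Your proof is correct and follows essentially the same route as the paper: the uniform bound $K_t(x,y)\le c\,a^{-\mathbf N}$ for $t\ge a$, the finiteness $\eta(K\times[a,b])<\infty$, and dominated convergence give finiteness and continuity, and the chamber identity $d(\sigma_\rho x,\sigma_\tau y)=\norm{x-y}$ gives decay of the kernel uniformly over $K\times[a,b]$. The only difference is in the decay step: you obtain it directly from \eqref{eq:poisson-kernel-formula} and \eqref{eq:A-lower-d}, with the explicit rate $(\norm{y}-M)^{-(\mathbf N+1)}$, whereas the paper quotes the cited upper bound \eqref{eq:matrix-kernel-upper}; this makes your argument slightly more self-contained but does not change its structure.
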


\begin{proof}
Use the compact support description in the statement and write $\supp\,g\subseteq K\times[a,b]$.

For $(x,t)\in K\times[a,b]$, the kernel $K_t(x,y)$ is continuous in $y$, 
and the denominator in the integral representation is bounded from below by $a^{\mathbf N+1}$.  
Since $g$ is bounded and $\eta(K\times[a,b])<\infty$, dominated convergence gives continuity and finiteness of $T^*g$ on $\Ccal$.

For $y\in\Ccal$ and $(x,t)\in K\times[a,b]$, the chamber identity gives
$$
      d(\sigma_\rho x,\sigma_\tau y)=\norm{x-y}\to\infty,\qquad \norm{y}\to\infty.
$$
The kernel upper bound  \eqref{eq:matrix-kernel-upper} therefore yields
$$
        \sup_{(x,t)\in K\times[a,b]}K_t(x,y)\longrightarrow0.
$$
Multiplying by the bounded function $g$ and integrating over the finite measure set $K\times[a,b]$ proves the decay.  
The asserted properties of $\Omega_k$ follow immediately.
\end{proof}

Let $\Omega_k$ with $k\in\Z$ be as in Lemma~\ref{lem:level-sets-proper}. For a family $\mathcal I_k:=\mathcal I(\Omega_k)$, define
$$
        \widehat\Omega_k   :=  \bigcup_{Q\in\mathcal I_k}\widehat{Q}_\Ccal.
$$
We use the following elementary counting fact.  It is stated here, where it is first needed, 
and it will be cited again in Appendix~\ref{sec:dyadic-packing}; the proof is not repeated there.

\begin{lemma}\label{lem:level-box-overlap}
Fix $m\ge1$.  Let $\Omega_k$ be as in \eqref{eq:level-sets-Omega}, and $\mathcal I_k:=\mathcal I(\Omega_k)$, $k\in\Z$. Then
\begin{equation}\label{eq:level-box-overlap}
\sum_{k\in\Z}\sum_{Q\in\mathcal I_k}
        \one_{\widehat{3Q}_\Ccal\setminus\widehat\Omega_{k+m+1}}(x,t) \le C_{N,m}, \qquad (x,t)\in\Ccal\times(0,\infty).
\end{equation}
\end{lemma}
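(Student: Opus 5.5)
Fix $(x,t)\in\Ccal\times(0,\infty)$. The plan is to show two things: for each $k$ only boundedly many $Q\in\mathcal I_k$ contribute, and only boundedly many indices $k$ contribute at all. The first claim is immediate from Lemma~\ref{lem:whitney-chamber}(3). Since $(x,t)\in\widehat{3Q}_\Ccal$ forces $x\in 3Q_\Ccal$, we get
$$
        \sum_{Q\in\mathcal I_k}\one_{\widehat{3Q}_\Ccal}(x,t)\le\sum_{Q\in\mathcal I_k}\one_{3Q_\Ccal}(x)\le C_N
$$
for every $k$. It therefore remains to bound the number of $k\in\Z$ for which some $Q\in\mathcal I_k$ satisfies $(x,t)\in\widehat{3Q}_\Ccal\setminus\widehat\Omega_{k+m+1}$. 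Call such $k$ \emph{active}.

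For an active $k$ with cube $Q$, we have $x\in3Q_\Ccal\subseteq\Omega_k$ and $t\le\ell(Q)$. We also have the Whitney relation $\ell(Q)\simeq\dist(x,\Ccal\setminus\Omega_k)$. This holds up to constants depending on $N$: by \eqref{eq:whitney-def}, $3Q_\Ccal\subseteq\Omega_k$ while $9Q_\Ccal$ meets $\Ccal\setminus\Omega_k$. One caveat is that $3Q\cap\Ccal\subseteq\Omega_k$ only controls the distance to the complement inside $\Ccal$. I would use the lower bound $\ell(Q)\le C\,\delta_k(x)$, where $\delta_k(x):=\dist(x,\Ccal\setminus\Omega_k)$; this follows because the point of $9Q_\Ccal\setminus\Omega_k$ lies within $C\ell(Q)$ of $x$. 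So active $k$ satisfy $t\le C\,\delta_k(x)$.

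On the other hand, $x\in\Omega_{k+m+1}$ whenever $x\in\Omega_k$ and $T^*g(x)>2^{k+m+1}$. If $x\in\Omega_{k+m+1}$, then $x$ lies in a unique $Q'\in\mathcal I_{k+m+1}$, and $(x,t)\in\widehat\Omega_{k+m+1}$ unless $t>\ell(Q')\gtrsim\delta_{k+m+1}(x)$. So for an active $k$, either
\begin{enumerate}
\item[(a)] $x\notin\Omega_{k+m+1}$, i.e.\ $2^k<T^*g(x)\le2^{k+m+1}$; or
\item[(b)] $x\in\Omega_{k+m+1}$ and $c\,\delta_{k+m+1}(x)<t\le C\,\delta_k(x)$.
\end{enumerate}
Case (a) allows at most $m+1$ values of $k$, since $T^*g(x)$ is a fixed finite number by Lemma~\ref{lem:level-sets-proper}.

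Case (b) is the main obstacle. The sets $\Omega_j$ decrease in $j$, so $\delta_j(x)$ is non-increasing in $j$. Suppose $k_1<k_2$ are both active of type (b) with $k_2\ge k_1+m+1$. Then monotonicity gives
$$
        t\le C\,\delta_{k_2}(x)\le C\,\delta_{k_1+m+1}(x)<(C/c)\,t .
$$
This alone does not bound the count, so I would instead count via cubes. Each active $k$ supplies a cube $Q_k\in\mathcal I_k$ with $x\in3Q_k$ and $t\le\ell(Q_k)$. Each such $k$ also has the cube $Q'_{k+m+1}\ni x$ with $\ell(Q'_{k+m+1})<t$. By the nesting property Lemma~\ref{lem:whitney-chamber}(4) and the monotonicity of $\delta_j$, the side lengths $\ell(Q_k)$ in the active chain are comparable to $t$ up to $C_N$ factors. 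Indeed, $\ell(Q_k)\lesssim\delta_k(x)$ and $\delta_k(x)\le\delta_{k'-m-1}(x)\lesssim t$ once there is another active $k'\ge k+m+1$.

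Hence all but the last $m+1$ active indices have $\ell(Q_k)\in[t,C_N t]$, which leaves finitely many dyadic sizes. For a fixed size, the cubes $Q_k$ satisfy $x\in3Q_k$, so there are at most $3^N$ possible cubes. A fixed cube $Q$ can be Whitney for $\Omega_k$ for at most boundedly many $k$ within a chain spaced by $m+1$, because $3Q_\Ccal\subseteq\Omega_k$ fails once $k$ passes the infimum of $T^*g$ over $3Q_\Ccal$. This last point is the delicate step; I would try to control it by showing that a cube Whitney for both $\Omega_k$ and $\Omega_{k+m+1}$ would put $(x,t)$ inside $\widehat\Omega_{k+m+1}$, contradicting activity.

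Combining these bounds, the number of active $k$ is at most $C_{N,m}$, and summing over them with the per-$k$ bound $C_N$ gives \eqref{eq:level-box-overlap}.
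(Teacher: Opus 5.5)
\paragraph{Verdict.} Your outline follows the paper's proof, but the step the paper leaves as an assertion (the crossing count) fails in your version, and in fact the statement itself is false as written.

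\paragraph{Where your argument breaks.} Your outline is the paper's own: a per-level bound from Lemma~\ref{lem:whitney-chamber}(3), at most $m+1$ levels with $x\notin\Omega_{k+m+1}$, and then a count of the ``crossings'' $\ell(J)<t\le\ell(Q)$. The paper only asserts that count ("Whitney maximality and bounded neighbour overlap"). Your attempt to carry it out fails at three specific points.

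\emph{First}, the bound $\ell(Q)\le C\delta_k(x)$ is derived backwards. A point of $9Q_\Ccal\setminus\Omega_k$ within $C\ell(Q)$ of $x$ gives $\delta_k(x)\le C\ell(Q)$, not the reverse. The reverse bound does hold for $x\in Q_\Ccal$. It fails for $x\in 3Q_\Ccal$, because the condition $3Q_\Ccal\subseteq\Omega_k$ still allows the closure of $3Q$ to touch $\Ccal\setminus\Omega_k$.

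\emph{Second}, in the comparability step you use $\delta_k(x)\le\delta_{k'-m-1}(x)$ for $k'\ge k+m+1$. Since $\delta_j(x)$ is non-increasing in $j$, this inequality goes the wrong way.

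\emph{Third}, the final step is left open, and the contradiction you hope for does not occur. From $Q\in\mathcal I_{k+m+1}$ you only get $\widehat Q_\Ccal\subseteq\widehat\Omega_{k+m+1}$, not $\widehat{3Q}_\Ccal\subseteq\widehat\Omega_{k+m+1}$. Suppose $x$ lies in a smaller Whitney cube $J\subseteq 3Q$ of $\Omega_{k+m+1}$ with $\ell(J)<t\le\ell(Q)$. Then $(x,t)$ is counted at level $k$. Nothing in the Whitney rule prevents this configuration near $x$ from recurring at many consecutive levels, for instance when $T^*g$ has a steep cliff so that the sets $\Omega_k$ nearly coincide for many $k$.

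\paragraph{Why it cannot be repaired as stated.} The per-level input you and the paper both use, Lemma~\ref{lem:whitney-chamber}(3) with $a=3$, is false for this Whitney rule, and so is the statement. Here is a counterexample.
\begin{itemize}
\item Take $N=1$, $\Ccal=(0,\infty)$ and $\sigma_\rho=\sigma_\tau=\id$. Let $\eta$ be the unit point mass at $(1/2,1)$ and $g=c\,\one_{[1/4,3/4]\times[1/2,2]}$ with $c=1/p_1(1/2,1)$.
\item Then $T^*g=c\,p_1(1/2,\cdot)$. This is strictly decreasing on $(1/2,\infty)$, since $\partial_y A(1/2,y,\xi)^2=2(y-\xi)>0$ for $\xi\in[-1/2,1/2]$. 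Hence $\Omega_0\cap(1/2,\infty)=(1/2,1)$, and $1\notin\Omega_0$ because $T^*g(1)=1$.
\item For $n\ge 3$, the cube $Q_n=[1-2^{1-n},1-2^{-n})$ has $3Q_n=[1-3\cdot 2^{-n},1)\subseteq\Omega_0$. The triple of its parent contains $1$, so $Q_n$ is maximal and $Q_n\in\mathcal I_0$.
\item Take $(x,t)=(1-2^{-M},2^{-M})$ with $M$ large. Then $x\notin\Omega_{m+1}$, since $T^*g(x)\to 1$. Moreover $(x,t)\in\widehat{3Q_n}_\Ccal$ for every $3\le n\le M$.
\item So the $k=0$ term alone is at least $M-2$, which is unbounded.
\end{itemize}

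\paragraph{What a correct version needs.} Two changes to the definitions:
\begin{itemize}
\item a Whitney rule with a safety margin, so that $x\in 3Q_\Ccal$ forces $\ell(Q)\simeq\delta_k(x)$;
\item a tent defined through $\dist(x,\Ccal\setminus\Omega_{k+m+1})$, rather than as a union of Whitney boxes.
\end{itemize}
With the constants matched, your window $\delta_{k+m+1}(x)<At\le\delta_k(x)$ then leaves at most $m+1$ levels.
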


\begin{proof}
Fix $(x,t)\in\Ccal\times(0,\infty)$.  For a fixed level $k$, Lemma~\ref{lem:whitney-chamber} gives
$$
        \sum_{Q\in\mathcal I_k}\one_{3Q_\Ccal}(x)\le C_N,
$$
so only boundedly many cubes $Q$ can contribute at that level.

It remains to count the levels.  If
$$
        (x,t)\in\widehat{3Q}_\Ccal\setminus\widehat\Omega_{k+m+1},
$$
then $x\in\Omega_k$ and $t\le\ell(Q)$.  If $x\notin\Omega_{k+m+1}$, nesting leaves at most $m+1$ possible values of $k$.

If $x\in\Omega_{k+m+1}$, let $J\in\mathcal I_{k+m+1}$ be the Whitney cube containing $x$.  
The condition $(x,t)\notin\widehat\Omega_{k+m+1}$ gives $t>\ell(J)$.  
Hence the pair $(Q,J)$ gives a crossing of the height $t$ between the levels
$k$ and $k+m+1$.  Whitney maximality and bounded neighbour overlap allow only
$O_{N,m}(1)$ such crossings.
This proves \eqref{eq:level-box-overlap}.
\end{proof}
\subsection{Proof of the sufficiency of scalar chamber theorem}

If $\nu\equiv0$, then $L^2(\Ccal,d\nu)=\{0\}$ and the scalar estimate is trivial.  Henceforth
$$
        \nu\not\equiv0.
$$
By Lemma~\ref{lem:forward-implies-square-finiteness}, every chamber box used below has finite $t^2\,d\eta$-mass.

We prove the dual estimate
\begin{equation}\label{eq:dual-estimate-target}
        \norm{T^*g}_{L^2(\Ccal,d\nu)} \lesssim   (F+B)\norm{g}_{L^2(\Ccal\times(0,\infty),d\eta)}.
\end{equation}
By duality this is equivalent to the desired bound for $T$.  
It suffices to prove this estimate for $g$ non-negative, bounded, and compactly supported in $\Ccal\times(0,\infty)$.  
The general case follows by replacing $g$ with $|g|$, approximating from below, and then using monotone convergence and duality.

The stopping-time sums are also read in a finite form.  
We insert $\one_{Q_{0,\Ccal}}$ in the outer $\nu$-integral, restrict $k$ to a finite interval, 
and use only finitely many side lengths.  The constants below do not see these cutoffs.  
After the estimates are obtained, $Q_0\uparrow\R^N$ and the level and scale cutoffs are removed.  
The notation below suppresses these cutoffs.

Lemma~\ref{lem:level-sets-proper} makes the level sets relatively open, bounded, and proper.  
Thus Lemma~\ref{lem:whitney-chamber} applies. 
Let $\Omega_k$ be given by \eqref{eq:level-sets-Omega}, let $\mathcal I_k=\mathcal I(\Omega_k)$, 
and choose a positive integer $m$ to be fixed later.  Set
$$
        \mathbf{C}_k:=\Omega_{k+m}\setminus\Omega_{k+m+1}.
$$
The sets $\mathbf{C}_k$ are disjoint in $k$, and they cover $\{y\in\Ccal:T^*g(y)>0\}$.  
The complement does not contribute to the $L^2(\Ccal,d\nu)$ norm of $T^*g$.  
If $y\in\mathbf{C}_{k}$, then $T^*g(y)\le 2^{k+m+1}$, and hence
\begin{align}\label{eq:level-decomp-first}
        \int_\Ccal \abs{T^*g(y)}^2\,d\nu(y)
        &\lesssim_m  \sum_k 2^{2k}\nu(\Omega_k\cap \mathbf{C}_k)     \le  C_m\sum_k2^{2k}  \sum_{Q\in\mathcal I_k}\nu(F_k(Q)),
\end{align}
where
$$
        F_k(Q):=Q_\Ccal\cap \mathbf{C}_k.
$$
Fix $0<\delta<1$.  Decompose the cubes in $\mathcal I_k$ into three classes:
\begin{align*}
        \Gamma_k^1 &:=  \Bigl\{Q\in\mathcal I_k:\nu(F_k(Q))\le \delta\nu(Q_\Ccal)\Bigr\},       \\
        \Gamma_k^2   &:=  \Bigl\{Q\in\mathcal I_k:\nu(F_k(Q))> \delta\nu(Q_\Ccal),\     \ \alpha_k(Q)>\beta_k(Q)\Bigr\},                          \\
        \Gamma_k^3 &:=  \Bigl\{Q\in\mathcal I_k:\nu(F_k(Q))> \delta\nu(Q_\Ccal),\    \ \alpha_k(Q)\le\beta_k(Q)\Bigr\},
\end{align*}
where
\begin{align*}
        \alpha_k(Q) &:= \iint_{\widehat{3Q}_\Ccal\setminus\widehat\Omega_{k+m+1}}
        T\one_{F_k(Q)}(x,t)g(x,t)\,d\eta(x,t),     \\
        \beta_k(Q)  &:=  \iint_{\widehat{3Q}_\Ccal\cap\widehat\Omega_{k+m+1}}  T\one_{F_k(Q)}(x,t)g(x,t)\,d\eta(x,t).
\end{align*}
Correspondingly write the right side of \eqref{eq:level-decomp-first} as $L_1+L_2+L_3$ based on $\Gamma_k^i$, $i=1,2,3$.

\subsubsection*{The small part}
For $L_1$, the bounded overlap of Whitney cubes gives
\begin{align*}
        L_1 &\le  C_m\sum_k2^{2k}  \sum_{Q\in\Gamma_k^1}\nu(F_k(Q))                    \le
        C_m\delta   \sum_k2^{2k}     \sum_{Q\in\mathcal I_k}\nu(Q_\Ccal)                \\
        &\le   C_m\delta   \sum_k2^{2k}\nu(\Omega_k)                          
        \le   C_m\delta \int_\Ccal \abs{T^*g(y)}^2\,d\nu(y).
\end{align*}
At the finite truncation stage fixed above, the last integral is finite by construction.
Choose $\delta>0$ sufficiently small, depending only on $m$ and the Whitney overlap constants, 
so that this term can be absorbed into the left side.
Once the estimate is obtained with constants independent of the truncation, monotone convergence removes the truncations.

\subsubsection*{The part controlled by the forward test}
The local lower bound is the basic estimate here.  By Lemma~\ref{lem:whitney-chamber}, for every $Q\in\mathcal I_k$ there is a point
$$
        z_Q\in9Q_\Ccal\cap(\Ccal\setminus\Omega_k).
$$
Lemma~\ref{lem:kernel-maximum-comparison} gives, for $y\in Q_\Ccal$,
$$
        T^*(g\one_{(\widehat{3Q}_\Ccal)^c})(y) \le  C_*T^*g(z_Q) \le   C_*2^k.
$$
Choose $m$ so large that $2^m>C_*+2$.  If $y\in F_k(Q)$, then $y\in\Omega_{k+m}$, and hence
\begin{equation}\label{eq:local-lower-bound}
        T^*(g\one_{\widehat{3Q}_\Ccal})(y)  \ge  2^{k+m}-C_*2^k  \ge 2^k.
\end{equation}
Integrating \eqref{eq:local-lower-bound} over $F_k(Q)$ gives
\begin{equation}\label{eq:alpha-beta-lower}
        2^k\nu(F_k(Q))  \le  \alpha_k(Q)+\beta_k(Q).
\end{equation}

If $Q\in\Gamma_k^2$, then $\alpha_k(Q)>\beta_k(Q)$, so
$$
        2^k\nu(F_k(Q))\le 2\alpha_k(Q).
$$
Therefore
$$
        L_2  \lesssim_{m}  \sum_k\sum_{Q\in\Gamma_k^2}   \frac{\alpha_k(Q)^2}{\nu(F_k(Q))}.
$$
By Cauchy's inequality, positivity, and the forward test applied to $Q$,
\begin{align*}
        \alpha_k(Q)^2 &\le  \left(  \iint_{\widehat{3Q}_\Ccal}  \abs{T\one_{F_k(Q)}(x,t)}^2\,d\eta(x,t) \right)
        \left( \iint_{\widehat{3Q}_\Ccal\setminus\widehat\Omega_{k+m+1}}    \abs{g(x,t)}^2\,d\eta(x,t) \right)                           \\
        &\le   F^2\nu(Q_\Ccal)  \iint_{\widehat{3Q}_\Ccal\setminus\widehat\Omega_{k+m+1}}  \abs{g(x,t)}^2\,d\eta(x,t).
\end{align*}
Since $\nu(F_k(Q))>\delta\nu(Q_\Ccal)$, for $Q\in{\Gamma_k^2}$,
\begin{align*}
        L_2  &\lesssim_{m}   \delta^{-1}F^2   \iint_{\Ccal\times(0,\infty)}  \sum_k\sum_{Q\in\Gamma_k^2}
        \one_{\widehat{3Q}_\Ccal\setminus\widehat\Omega_{k+m+1}}(x,t)  \abs{g(x,t)}^2\,d\eta(x,t)                    \\
        &\lesssim_{N,m}   \delta^{-1}F^2\norm{g}_{L^2(\Ccal\times(0,\infty),d\eta)}^2.
\end{align*}
The last inequality follows from the level-box overlap estimate \eqref{eq:level-box-overlap}.

\subsubsection*{The part controlled by the backward test and the principal cubes}
If $Q\in\Gamma_k^3$, then \eqref{eq:alpha-beta-lower} gives
$$
        2^k\nu(F_k(Q))\le2\beta_k(Q).
$$
Thus
$$
        L_3  \lesssim_{m} \sum_k\sum_{Q\in\Gamma_k^3}  \frac{\beta_k(Q)^2}{\nu(F_k(Q))}
        \lesssim_{m}  \delta^{-1}   \sum_k\sum_{Q\in\Gamma_k^3}  \frac{\beta_k(Q)^2}{\nu(Q_\Ccal)}.
$$

The upper part of the box is decomposed next.  Near a wall the centre of a Whitney cube is not a reliable chamber point, 
so centers are avoided.  Nor do we ask a Whitney cube itself to sit inside $3Q$.  
The right ambient object is the following family of maximal dyadic subcubes of $3Q$.

For $Q\in\Gamma_k^3$, let $\mathcal R_{k+m+1}(Q)$ be the collection of maximal dyadic cubes $H$ such that
$$
        H\subseteq 3Q,  \qquad  H_\Ccal\ne\varnothing,  \qquad
        \ell(H)\le\ell(Q),  \qquad    3H_\Ccal\subseteq\Omega_{k+m+1}.
$$
The cubes in $\mathcal R_{k+m+1}(Q)$ are pairwise disjoint by maximality. The use of ambient dyadic cubes here is deliberate: 
the chamber is intersected only after the dyadic containment relations have been fixed.

They cover the relevant part of the upper box:
\begin{equation}\label{eq:RkQ-cover}
        \widehat{3Q}_\Ccal\cap\widehat\Omega_{k+m+1}
        \subseteq \bigcup_{H\in\mathcal R_{k+m+1}(Q)}\widehat H_\Ccal.
\end{equation}
Indeed, take $(x,t)$ in the left-hand side.  Then
$$
        x\in3Q_\Ccal,   \qquad   0<t\le\ell(Q).
$$
Choose $J\in\mathcal I_{k+m+1}$ with
$$
        x\in J_\Ccal,   \qquad   t\le\ell(J).
$$
Write
$$
        3Q=\bigsqcup_{\theta=1}^{3^N} I_\theta
$$
with $\ell(I_\theta)=\ell(Q)$, and let $I_\theta$ be the piece containing $x$.  There are two cases.

If $\ell(J)\le\ell(Q)$, then $J$ and $I_\theta$ meet and $\ell(J)\le\ell(I_\theta)$, so
$$
        J\subseteq I_\theta\subseteq3Q.
$$
Set $H_0=J$.  If $\ell(J)>\ell(Q)$, then $I_\theta\subseteq J$; set $H_0=I_\theta$.  In both cases
$$
        H_0\subseteq3Q,  \qquad   t\le\ell(H_0),   \qquad   3H_{0,\Ccal}\subseteq3J_\Ccal\subseteq\Omega_{k+m+1}.
$$
For $H_0=I_\theta$ we used only that the triple of a dyadic subcube is contained in the triple of its dyadic ancestor.  
Maximality now gives a cube $H\in\mathcal R_{k+m+1}(Q)$ with $H_0\subseteq H$, hence $(x,t)\in\widehat H_\Ccal$.

Also, for every $H\in\mathcal R_{k+m+1}(Q)$,
\begin{equation}\label{eq:H-separated-from-F}
        3H_\Ccal\cap F_k(Q)=\varnothing,
\end{equation}
because
$$
        3H_\Ccal\subseteq\Omega_{k+m+1},  \qquad
        F_k(Q)\subseteq \mathbf{C}_k\subseteq\Ccal\setminus\Omega_{k+m+1}.
$$
Thus Lemma~\ref{lem:vertical-comparison} applies to $E=F_k(Q)$ and $J=H$.

Put
\begin{equation}\label{eq:tildemu}
        d\widetilde\eta(x,t):=t^2\,d\eta(x,t).
\end{equation}
By Lemma~\ref{lem:forward-implies-square-finiteness}, $\widetilde\eta(\widehat H_\Ccal)<\infty$ for every dyadic cube $H$ with
$H_\Ccal\ne\varnothing$.  For such an $H$, define
$$
        \Pi(H):=
        \begin{cases}
        \displaystyle  \frac1{\widetilde\eta(\widehat H_\Ccal)}  \iint_{\widehat H_\Ccal}\frac{g(x,t)}{t}\,d\widetilde\eta(x,t),
        & \widetilde\eta(\widehat H_\Ccal)>0,\\[12pt]
          0, & \widetilde\eta(\widehat H_\Ccal)=0.
        \end{cases}
$$
This convention avoids division by zero.  
Terms with $\widetilde\eta(\widehat H_\Ccal)=0$ do not contribute to any of the sums below.

Using \eqref{eq:RkQ-cover}, positivity, \eqref{eq:H-separated-from-F}, and Lemma~\ref{lem:vertical-comparison}, we get
\begin{align}
        \beta_k(Q)
        &\le  \sum_{H\in\mathcal R_{k+m+1}(Q)}   \iint_{\widehat H_\Ccal}   T\one_{F_k(Q)}(x,t)g(x,t)\,d\eta(x,t)            \nonumber\\
        &\lesssim   \sum_{H\in\mathcal R_{k+m+1}(Q)}  \Pi(H)  \iint_{\widehat H_\Ccal}
        T\one_{Q_\Ccal}(x,t)\frac{d\widetilde\eta(x,t)}{t}.
        \label{eq:beta-Pi}
\end{align}
For the last inequality, Lemma~\ref{lem:vertical-comparison} gives
$$
        T\one_{F_k(Q)}(x,t) \simeq   \frac{t}{\ell(H)}T\one_{F_k(Q)}(x_H^\Ccal,\ell(H)),   \qquad (x,t)\in\widehat H_\Ccal.
$$
The reverse comparison is
$$
        \frac{t}{\ell(H)}T\one_{F_k(Q)}(x_H^\Ccal,\ell(H))  \lesssim
        T\one_{F_k(Q)}(x,t),  \qquad (x,t)\in\widehat H_\Ccal.
$$
The left-hand reference value is independent of $(x,t)$.  
Multiplying this pointwise inequality by $t^{-1}d\widetilde\eta(x,t)$ and integrating over $\widehat H_\Ccal$ gives
$$
        T\one_{F_k(Q)}(x_H^\Ccal,\ell(H)) \lesssim  \frac{\ell(H)}{\widetilde\eta(\widehat H_\Ccal)}  
        \iint_{\widehat H_\Ccal}  T\one_{F_k(Q)}(x,t)\frac{d\widetilde\eta(x,t)}{t}.
$$
Substituting this into the first comparison and using $F_k(Q)\subseteq Q_\Ccal$ and positivity of $T$ gives \eqref{eq:beta-Pi}.

We pass to principal cubes.  In the finite truncation fixed at the start of the proof, 
choose a large dyadic cube $Q_0$ containing every cube that appears in the sums.  The estimates below do not depend on this choice.

Define the stopping family $\Gcal$ recursively.  Put $Q_0\in\Gcal$.  
If $S\in\Gcal$, its children are the maximal dyadic subcubes $H\subsetneq S$ such that
$$
        \Pi(H)>10\Pi(S),
$$
then $H\in\Gcal$.  If $\Pi(S)=0$, there are no such children: 
since $g/t\ge0$, the equality $\Pi(S)=0$ implies $g/t=0$ $\widetilde\eta$-a.e.\ on $\widehat S_\Ccal$, 
and hence every subcube has average zero.

For every dyadic cube $H\subseteq Q_0$, let
$$
        \pi H:=\min\left\{S\in\Gcal:H\subseteq S\right\}
$$
be the smallest principal cube containing $H$.

The principal cubes satisfy the Carleson packing estimate
\begin{equation}\label{eq:principal-carleson}
        \sum_{S'\in\Gcal:S'\subseteq S} \widetilde\eta(\widehat{S'}_\Ccal)
        \lesssim   \widetilde\eta(\widehat S_\Ccal).
\end{equation}
Indeed, if $S'$ ranges over the children of $S$ and $\Pi(S)>0$, then the children are disjoint and
$$
        10\Pi(S)\widetilde\eta(\widehat{S'}_\Ccal)  <    \iint_{\widehat{S'}_\Ccal}\frac{g}{t}\,d\widetilde\eta.
$$
Summing over the children gives
$$
        \sum_{S'\in\operatorname{ch}_{\Gcal}(S)}\widetilde\eta(\widehat{S'}_\Ccal)
         \le \frac1{10\Pi(S)} \iint_{\widehat S_\Ccal}\frac{g}{t}\,d\widetilde\eta
        =  \frac1{10}\widetilde\eta(\widehat S_\Ccal),
$$
where $\operatorname{ch}_{\Gcal}(S)$ denotes the stopping children of $S$. 
 If $\Pi(S)=0$, there are no children, as observed above.  
 Iterating over the generations gives \eqref{eq:principal-carleson}.

The dyadic maximal operator relative to $\widetilde\eta$ (see \eqref{eq:tildemu}) and chamber boxes is
$$
        M_{\widetilde\eta}h(x,t)
        := \sup_{H\in{\Dcal}:\,(x,t)\in\widehat H_\Ccal} \frac1{\widetilde\eta(\widehat H_\Ccal)}
        \iint_{\widehat H_\Ccal}\abs{h}\,d\widetilde\eta,
$$
where cubes with $\widetilde\eta(\widehat H_\Ccal)=0$ are ignored.  
The embedding used here is a tree statement on the filtered measure space $(\Ccal\times(0,\infty),\widetilde\eta)$ 
whose atoms are the half-open chamber boxes $\widehat H_\Ccal$.  
It uses only the Carleson packing condition \eqref{eq:principal-carleson} for this dyadic tree.  
It does not use doubling of $\widetilde\eta$ and does not use the Euclidean Hardy--Littlewood maximal operator.

For the maximal estimate, the boxes in this half-open filtration are nested or disjoint.  
Thus $M_{\widetilde\eta}$ is dominated by the dyadic martingale maximal operator for the filtration generated by these boxes.  
Doob's maximal inequality gives its weak $(1,1)$ and $L^2(d\widetilde\eta)$ bounds with absolute constants.  
Applying the tree Carleson embedding to $h=g/t$ and then using Doob's inequality gives
\begin{equation}\label{eq:carleson-embedding-Pi}
        \sum_{S\in\Gcal} \Pi(S)^2\,\widetilde\eta(\widehat S_\Ccal)
        \lesssim  \iint_{\Ccal\times(0,\infty)} \left(M_{\widetilde\eta}(g/t)\right)^2\,d\widetilde\eta
        \lesssim   \norm{g}_{L^2(\Ccal\times(0,\infty),d\eta)}^2.
\end{equation}

The estimates of $L_{3,1}$ and $L_{3,2}$ require two finite packing facts. We use the restricted principal-cube count from
Sawyer--Wheeden~\cite[p.~861]{SawyerWheeden}; 
Lacey's primer gives the same stopping-time mechanism in modern notation~\cite{LaceyPrimer}.
Appendix~\ref{sec:dyadic-packing} gives the finite chamber version. Intersecting boxes with $\Ccal$ changes their measures, 
not the dyadic tree or the principal-cube chain count.

\begin{remark}\label{rem:heavy-not-carleson}
The heavy condition
$$
        \nu(F_k(Q))>\delta\nu(Q_\Ccal)
$$
controls only the multiplicity with which a fixed Whitney neighbour can occur.  
It is not a Carleson packing condition for the independent measure $d\widetilde\eta=t^2\,d\eta$.  
The $\widetilde\eta$-packing belowcomes from the principal-cube stopping construction.
\end{remark}

\begin{lemma}\label{lem:principal-neighbour-packing-L31}
With the notation above,
\begin{equation}\label{eq:principal-neighbour-packing-L31}
        \sum_{k,Q,\theta} \Pi(\pi I_\theta)^2\, \widetilde\eta(\widehat{I_\theta}_\Ccal)
        \le  C_{N,\delta}  \sum_{S\in\Gcal} \Pi(S)^2\widetilde\eta(\widehat S_\Ccal),
\end{equation}
where the sum on the left is restricted to triples with $1\le\theta\le3^N$, $k\in\Z$, and $Q\in\Gamma_k^3$.  
The same convention is used below.
\end{lemma}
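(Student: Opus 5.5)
Group the left-hand side by the principal cube $S=\pi I_\theta$. For each $S\in\Gcal$ it then suffices to show
\begin{equation*}
        \sum_{(k,Q,\theta):\,\pi I_\theta=S} \widetilde\eta(\widehat{I_\theta}_\Ccal) \le C_{N,\delta}\,\widetilde\eta(\widehat S_\Ccal).
\end{equation*}
Multiplying by $\Pi(S)^2$ and summing over $S$ gives \eqref{eq:principal-neighbour-packing-L31}. So the whole proof is a multiplicity count: how many triples $(k,Q,\theta)$ with $Q\in\Gamma_k^3$ produce a given dyadic cube $I=I_\theta(Q)$. Once that multiplicity is shown to be bounded, the cubes $I$ with $\pi I=S$ are subcubes of $S$ that are not contained in any child of $S$ in $\Gcal$. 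Their boxes $\widehat I_\Ccal$ are then packed into $\widehat S_\Ccal$, with the bounded overlap described below.

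\emph{Step 1 (fixed level).} Fix $k$. The cubes in $\mathcal I_k$ are pairwise disjoint. A given dyadic $I$ of side $\ell(Q)$ is one of the $3^N$ neighbours $I_\theta(Q)$ only for $Q$ among the at most $3^N$ dyadic cubes of that side adjacent to $I$. Hence for fixed $k$ the multiplicity is at most $3^N$.

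\emph{Step 2 (across levels).} This is the main obstacle. The heavy condition $\nu(F_k(Q))>\delta\nu(Q_\Ccal)$ is used here, in the sense of Remark~\ref{rem:heavy-not-carleson}. The sets $F_k(Q)\subseteq Q_\Ccal\cap\mathbf C_k$ are disjoint in $k$, because the $\mathbf C_k$ are disjoint. Suppose the same dyadic cube $Q$ lies in $\mathcal I_k$ for several values of $k$. Summing the heavy inequality over these $k$ gives
\begin{equation*}
        \#\{k\}\cdot\delta\,\nu(Q_\Ccal)<\sum_k\nu(F_k(Q))\le\nu(Q_\Ccal),
\end{equation*}
so there are at most $\delta^{-1}$ such levels. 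Fixing $I$ fixes $\ell(Q)$, and by Step~1 only $3^N$ choices of $Q$ remain. Each triple $(k,Q,\theta)$ is therefore counted at most $3^N\delta^{-1}$ times per cube $I$. I expect the delicate points to be checking that $\nu(Q_\Ccal)>0$ for heavy cubes and that $\nu(Q_\Ccal)<\infty$; the latter holds under the finite truncation.

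\emph{Step 3 (packing inside $S$).} With the multiplicity $C_{N,\delta}=3^N\delta^{-1}$ in hand, we still need to bound $\sum_{I:\pi I=S}\widetilde\eta(\widehat I_\Ccal)$, where $I$ ranges over distinct dyadic cubes that actually occur as neighbours. Distinct dyadic $I$ of different scales have nested boxes, so a plain sum over all such $I$ need not be controlled by $\widetilde\eta(\widehat S_\Ccal)$. Here I would invoke the restricted principal-cube count of Sawyer--Wheeden~\cite{SawyerWheeden}, in the finite chamber form proved in Appendix~\ref{sec:dyadic-packing}. The input it needs is that an occurring $I$ is a neighbour of a Whitney cube $Q\in\mathcal I_k$, and its box $\widehat{I}_\Ccal$ meets $\widehat{3Q}_\Ccal\setminus\widehat\Omega_{k+m+1}$. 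Lemma~\ref{lem:level-box-overlap} then bounds by $C_{N,m}$ the number of nested occurrences through any point $(x,t)$. This yields
\begin{equation*}
        \sum_{I:\pi I=S}\one_{\widehat I_\Ccal}\le C\,\one_{\widehat S_\Ccal}
\end{equation*}
on the relevant pieces, and integrating against $\widetilde\eta$ closes the estimate. If this pointwise route fails because whole boxes are not controlled, I would fall back on the appendix's chain count of principal cubes directly.
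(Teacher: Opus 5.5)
The reduction you use is the paper's, and your Steps 1--2 are correct, but Step 3 has a genuine gap. In fact the inequality is false as stated, so no counting argument can close it.

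\textbf{What matches the paper.} The paper proves the lemma by grouping over $S=\pi I_\theta$ (Proposition~\ref{prop:app-dyadic-sawyer-inputs}) and then counting multiplicities (Lemma~\ref{lem:app-sawyer-chain-count}). Your Steps~1--2 reproduce the density part of that count. A fixed ambient cube $I$ equals $I_\theta(Q)$ for at most $3^N$ cubes $Q$. Since the sets $F_k(Q)$ are disjoint in $k$, a heavy $Q$ occurs at fewer than $\delta^{-1}$ levels.

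\textbf{The gap in Step 3.} This is where the cross-level count is needed, and Lemma~\ref{lem:level-box-overlap} does not supply it. That lemma bounds the overlap of the truncated sets $\widehat{3Q}_\Ccal\setminus\widehat\Omega_{k+m+1}$. Knowing that $\widehat I_\Ccal$ meets such a set says nothing about how many full boxes $\widehat I_\Ccal$ contain a given point near $t=0$. Your qualifier ``on the relevant pieces'' is exactly what the lemma does not allow, because its left side charges $\widetilde\eta$ of the whole box $\widehat{I_\theta}_\Ccal$. Heavy Whitney cubes at successive levels that shrink to a point form arbitrarily long nested chains inside a single corona. Heaviness is a $\nu$-sparseness and does not shorten them (Remark~\ref{rem:heavy-not-carleson}). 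Falling back on the appendix does not help either: \eqref{eq:app-neighbour-level-count} counts only within one block of $m+2$ consecutive levels, and nothing bounds the number of blocks entering \eqref{eq:app-neighbour-multiplicity}.

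\textbf{A counterexample.} Take the following data.
\begin{itemize}
\item $\kappa=0$, so the kernel is the classical Poisson kernel, and $\sigma_\rho=\sigma_\tau=\id$.
\item $\nu$ is Lebesgue measure on $\Ccal$.
\item $\eta=w\,\delta_{(x_0,t_0)}$ with $\dist(x_0,\Wcal)=D\gg t_0$, and $g(x_0,t_0)=1$.
\end{itemize}
Then $T^*g=w\,p_{t_0}(x_0,\cdot)$, so $\Omega_k=B(x_0,r_k)$ whenever $r_k<D$, and $r_{k+1}/r_k\approx2^{-1/(N+1)}$ while $r_k\gg t_0$.

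The Whitney cube $Q_k\ni x_0$ of $\Omega_k$ satisfies $r_k/(4\sqrt N)<\ell(Q_k)\le r_k$. For $m$ large (which the proof allows), $F_k(Q_k)$ contains an orthant of the annulus $\{r_{k+m+1}\le\norm{y-x_0}<r_{k+m}\}$. By scaling, $\nu(F_k(Q_k))\ge c_N2^{-mN/(N+1)}\nu(Q_k)$. So $Q_k$ is heavy for every $\delta<c_N2^{-mN/(N+1)}$, and for large $m$ this range contains the $\delta\lesssim4^{-m}$ forced by absorbing $L_1$.

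If $r_{k+m+1}\ge4\sqrt N\,t_0$, then $(x_0,t_0)\in\widehat\Omega_{k+m+1}$. Hence $\alpha_k(Q_k)=0$ and $Q_k\in\Gamma_k^3$.

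Every cube $H$ with $\widetilde\eta(\widehat H_\Ccal)>0$ has $\Pi(H)=1/t_0$, so nothing stops and $\Gcal=\{Q_0\}$. The right side is therefore $C_{N,\delta}\,w$. On the left, each admissible level contributes $\Pi(Q_0)^2\,\widetilde\eta(\widehat{Q_k}_\Ccal)=w$ through $I_1=Q_k$ alone, and there are $\simeq\log(D/t_0)$ such levels. The ratio is unbounded as $t_0\to0$.

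\textbf{What does hold.} The estimate for $L_{3,1}$ can be obtained by Sawyer's mechanism without this lemma. Keep $F_k(Q)$ instead of $Q_\Ccal$ in \eqref{eq:beta-Pi} and divide by $\nu(F_k(Q))$. For fixed $S=\pi I_\theta$, the sets $F_k(Q)$ lie in $3S_\Ccal$ and are disjoint in $(k,Q)$. The backward test is then applied once per principal cube. This gives at most $100\cdot3^N B^2\,\Pi(S)^2\,\widetilde\eta(\widehat S_\Ccal)$ per $S$, with no $\widetilde\eta$-packing of the full boxes $\widehat{I_\theta}_\Ccal$ needed.
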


\begin{proof}
Apply the first packing estimate in Proposition~\ref{prop:app-dyadic-sawyer-inputs} with $\Lambda=\widetilde\eta$.  
The proposition is stated in the chamber-box notation, so it gives exactly \eqref{eq:principal-neighbour-packing-L31}.
\end{proof}

\begin{lemma}\label{lem:principal-packing-L32}
With the notation above,
\begin{equation}\label{eq:principal-packing-L32}
\sum_{k,Q,\theta}
        \sum_{\substack{H\in\mathcal R_{k+m+1}(Q):\ H\subseteq I_\theta,\\
                        \pi H\subsetneq\pi I_\theta}} \widetilde\eta(\widehat H_\Ccal)\Pi(\pi H)^2                         
        \le  C_{N,m}   \sum_{S\in\Gcal}\Pi(S)^2\widetilde\eta(\widehat S_\Ccal).
\end{equation}
\end{lemma}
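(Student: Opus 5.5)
The plan is to reorganize the left side of \eqref{eq:principal-packing-L32} by the principal cube $S=\pi H$. Then the count reduces to a bounded-multiplicity statement for the boxes $\widehat H_\Ccal$ sitting under a fixed $S$, after which the Carleson packing \eqref{eq:principal-carleson} finishes the job. As with Lemma~\ref{lem:principal-neighbour-packing-L31}, I expect this to be the second packing estimate of Proposition~\ref{prop:app-dyadic-sawyer-inputs} with $\Lambda=\widetilde\eta$. I sketch here the mechanism that would prove it.

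\emph{Step 1: group the sum by principal cubes.} Fix $S\in\Gcal$ and collect all terms with $\pi H=S$. The condition $\pi H\subsetneq \pi I_\theta$ together with $H\subseteq I_\theta$ forces $S\subseteq I_\theta$, and in fact $S\subsetneq I_\theta$. Thus $S$ is a principal cube strictly inside the neighbour $I_\theta$ of $Q$, and $\ell(S)<\ell(Q)$. The left side then becomes
$$
\sum_{S\in\Gcal}\Pi(S)^2\sum_{(k,Q,\theta,H)\,:\,\pi H=S}\widetilde\eta(\widehat H_\Ccal).
$$
It therefore suffices to prove the bound
$$
\sum_{(k,Q,\theta,H):\,\pi H=S}\widetilde\eta(\widehat H_\Ccal)\le C_{N,m}\,\widetilde\eta(\widehat S_\Ccal),
$$
possibly with $\widehat S_\Ccal$ replaced by a sum over principal cubes below $S$. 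The latter form is still acceptable, because iterating through \eqref{eq:principal-carleson} controls such sums by $\widetilde\eta(\widehat S_\Ccal)$.

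\emph{Step 2: bound the multiplicity under a fixed $S$.} For fixed $(k,Q)$ the cubes $H\in\mathcal R_{k+m+1}(Q)$ are pairwise disjoint, so the boxes $\widehat H_\Ccal\subseteq\widehat S_\Ccal$ are disjoint for a single $(k,Q)$. The index $\theta$ contributes at most $3^N$ choices. It remains to count how many pairs $(k,Q)$ with $Q\in\mathcal I_k$ can produce a given point $(x,t)\in\widehat S_\Ccal$ inside some $\widehat H_\Ccal$.

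Such a point satisfies three conditions: $x\in 3Q_\Ccal$, $t\le\ell(Q)$, and $(x,t)\in\widehat\Omega_{k+m+1}$. I would reuse the crossing argument of Lemma~\ref{lem:level-box-overlap}, with the roles swapped. For a fixed level $k$, Whitney bounded overlap (Lemma~\ref{lem:whitney-chamber}(3)) allows only $C_N$ choices of $Q$. Across levels, the pair consisting of $Q\in\mathcal I_k$ and the cube $J\in\mathcal I_{k+m+1}$ with $t\le \ell(J)$ gives a height crossing. However, this does not bound the number of levels, because $t$ may lie below many Whitney scales.

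To close this gap, use $S\subsetneq I_\theta\subseteq 3Q$ and $\ell(Q)\simeq\ell(I_\theta)>\ell(S)$. By Lemma~\ref{lem:whitney-chamber}(4), the cubes $Q$ at different levels that contain a neighbour strictly containing $S$ are nested, and their levels increase as $\ell(Q)$ decreases. Since $3H_\Ccal\subseteq \Omega_{k+m+1}$ while $Q$ is Whitney for $\Omega_k$, the side lengths $\ell(Q)$ must shrink geometrically from level $k$ to level $k+m+1$.

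Summing $\widetilde\eta(\widehat H_\Ccal)$ over these levels then gives disjoint families at each scale. Whenever two levels overlap on the same $(x,t)$, one of the corresponding $H$'s has $\pi H\neq S$, which is excluded. This yields multiplicity at most $C_{N,m}$, and hence the bound by $\widetilde\eta(\widehat S_\Ccal)$.

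\emph{Main obstacle.} The delicate point is Step 2: ruling out unboundedly many levels $k$ whose $H$'s all share the same principal cube $S$. The heavy condition does not help here, by Remark~\ref{rem:heavy-not-carleson}. If the direct count fails, my fallback is the Sawyer--Wheeden device. Assign each term to the stopping child $S'$ of $\pi I_\theta$ containing $S$. Then note that $\ell(Q)$ determines $\pi I_\theta$ up to $O_N(1)$ choices once $S'$ is fixed, and sum $\widetilde\eta(\widehat H_\Ccal)\le \widetilde\eta(\widehat{S'}_\Ccal)$. The constant $\Pi(S)^2\le\Pi(S')^2$ is handled by the $10$-fold growth along the principal chain, which makes the resulting geometric series summable.
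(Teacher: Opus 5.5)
Your Step~1 is the reduction the paper itself uses. The lemma is the second estimate of Proposition~\ref{prop:app-dyadic-sawyer-inputs}, proved by grouping by $S=\pi H$ and bounding the multiplicity of the boxes $\widehat H_\Ccal$ inside $\widehat S_\Ccal$. Your observation $H\subseteq S\subsetneq I_\theta$, hence $\ell(Q)>\ell(S)$, is correct.

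The gap is the cross-level count in Step~2. Lemma~\ref{lem:whitney-chamber}(4) does not make the cubes $Q$ nested; it only says what a nesting $Q\subsetneq Q'$ implies. Nothing forces Whitney cubes to shrink when you pass from $\Omega_k$ to $\Omega_{k+m+1}$, because the level sets of $T^*g$ can be crowded. Suppose $g\,d\eta$ sits at a tiny height $w$ over a large region $U$, with $g\approx 2^b$. Then $T^*g$ is a smoothed step with Poisson tails, and the boundaries of $\Omega_a,\dots,\Omega_{b-1}$ all lie within distance about $w2^{b-a}$ of $\partial U$. Take $x$ at distance $\simeq\ell(S)\gg w2^{b-a}$ from $\partial U$. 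The Whitney cube of $x$ and its neighbours are then identical at all these levels. So one and the same $(Q,\theta,H)$, with the same $\pi H=S$, occurs for about $b-a$ consecutive values of $k$. (A bump of $g$ on a Whitney cube $S=H$ makes $S$ principal with $\pi I_\theta\supsetneq S$.) Hence the claim that two overlapping levels force $\pi H\ne S$ is false, and no argument using only Whitney geometry, maximality and the principal tree can bound the multiplicity. Your fallback does not help either. The repetition occurs with $Q,H,S,S'$ all fixed, and the comparison $\Pi(S)\le\Pi(S')$ is backwards, since principal averages increase as you descend in $\Gcal$.

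What is missing is the restriction $Q\in\Gamma_k^3$, which is part of the statement (the convention fixed in Lemma~\ref{lem:principal-neighbour-packing-L31}) and which you set aside. You only need $F_k(Q)\ne\varnothing$.
\begin{itemize}
\item $Q\in\mathcal I_k$ gives $T^*g>2^k$ on $Q_\Ccal$, while a point of $F_{k'}(Q)$ has $T^*g\le 2^{k'+m+1}$. So a fixed $Q$ is used only at levels in a window of length $m+1$. This is exactly the fixed-neighbour multiplicity control of Remark~\ref{rem:heavy-not-carleson}; in the crowding example, $F_k(Q)$ is empty at all but at most $m+1$ of those levels.
\item Fix $(x,t)\in\widehat S_\Ccal$ and let $J_k\in\mathcal I_k$ be the cube containing $x$, so $J_{k+1}\subseteq J_k$. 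Since $H\subsetneq I_\theta$, the parent of $H$ meets every condition defining $\mathcal R_{k+m+1}(Q)$ except possibly the inclusion of its chamber triple in $\Omega_{k+m+1}$. Maximality therefore forces $H=J_{k+m+1}$, so $\ell(J_{k+m+1})\le\ell(S)<\ell(Q)$.
\item Contributing levels with $\ell(J_k)>\ell(S)$ lie in a window of $m+1$ levels, each with $O_N(1)$ cubes $Q$ by Lemma~\ref{lem:whitney-chamber}(3).
\item For the remaining levels, use the same-level Whitney comparability asserted in the proof of Lemma~\ref{lem:whitney-chamber}. It gives $\ell(S)<\ell(Q)\le C_N\ell(J_k)\le C_N\ell(S)$ with $S\subseteq 3Q$. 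So only $O_N(1)$ cubes are possible, each used at most $m+1$ times.
\end{itemize}
This gives multiplicity $O_N(m+1)$ and the lemma with a constant $C_{N,m}$.

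Note that the appendix's family $\mathfrak S(S)$ drops this restriction. In that generality the crowding example violates \eqref{eq:app-local-separated-carleson}, so the appendix's level count cannot be used to fill your Step~2.
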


\begin{proof}
Apply the second packing estimate in Proposition~\ref{prop:app-dyadic-sawyer-inputs} with $\Lambda=\widetilde\eta$.  
The strict relation $\pi H\subsetneq\pi I_\theta$ is a relation between ambient dyadic principal cubes, 
and the proposition already includes the corresponding chamber-box measure factors.  This gives \eqref{eq:principal-packing-L32}.
\end{proof}

Let $I_1=Q\in\Gamma_k^3$, and let $I_2,\ldots,I_{3^N}$ be the dyadic cubes of side length $\ell(Q)$ whose disjoint union is $3Q$.  
Each $H\in\mathcal R_{k+m+1}(Q)$ satisfies
$$
        H\subseteq3Q,     \qquad    \ell(H)\le\ell(Q),
$$
and hence lies in exactly one $I_\theta$.  Split the sum in \eqref{eq:beta-Pi} according to
$$
        H\subseteq I_\theta,  \qquad \pi H=\pi I_\theta  \quad\text{or}\quad  \pi H\subsetneq\pi I_\theta.
$$
This gives
$$
        L_3\lesssim_{m} L_{3,1}+L_{3,2}.
$$

For the first part, if $\pi H=\pi I_\theta$, then by construction $\Pi(H)\le10\Pi(\pi I_\theta)$.  
Using the backward test,
\begin{align*}
        L_{3,1} &\lesssim_{m} \delta^{-1}  \sum_{k,Q,\theta}\frac{\Pi(\pi I_\theta)^2}{\nu(Q_\Ccal)}
        \left(  \iint_{\widehat{I_\theta}_\Ccal}  T\one_{Q_\Ccal}(x,t)\frac{d\widetilde\eta(x,t)}{t}  \right)^2                      \\
        &=  \delta^{-1}  \sum_{k,Q,\theta}  \frac{\Pi(\pi I_\theta)^2}{\nu(Q_\Ccal)}
        \left(  \int_{Q_\Ccal}T^*(t\one_{\widehat{I_\theta}_\Ccal})(y)\,d\nu(y)  \right)^2                                      \\
        &\lesssim_{m}  \delta^{-1}  \sum_{k,Q,\theta} \Pi(\pi I_\theta)^2
        \int_{(3I_\theta)_\Ccal}  \abs{T^*(t\one_{\widehat{I_\theta}_\Ccal})(y)}^2\,d\nu(y) \\
        &\lesssim_{m} \delta^{-1}B^2  \sum_{k,Q,\theta}  \Pi(\pi I_\theta)^2 \widetilde\eta(\widehat{I_\theta}_\Ccal).
\end{align*}
Here $Q\subseteq3I_\theta$, so $Q_\Ccal\subseteq (3I_\theta)_\Ccal$.  
If $(I_\theta)_\Ccal=\varnothing$, the preceding term is zero.  Otherwise the backward test for the cube $I_\theta$ applies.  
Hence,  by \eqref{eq:principal-neighbour-packing-L31} and \eqref{eq:carleson-embedding-Pi},
\begin{equation}\label{eq:L31-estimate}
        L_{3,1}  \lesssim_{m}   C_{N,\delta}B^2\norm{g}_{L^2(\Ccal\times(0,\infty),d\eta)}^2.
\end{equation}

For the second part, the stopping construction gives
$$
        \Pi(H)\le 10\Pi(\pi H),   \qquad H\subseteq Q_0.
$$
Indeed, otherwise $H$ would be contained in a stopping child of $\pi H$, contradicting the definition of $\pi H$.  
This, combined with Cauchy's
inequality in the sum over $H$, yields
\begin{align*}
        \frac1{\nu(Q_\Ccal)}
        \Biggl(  \sum_{\substack{H\in\mathcal R_{k+m+1}(Q):\ H\subseteq I_\theta,\\
       \pi H\subsetneq\pi I_\theta}}   \Pi(H) \iint_{\widehat H_\Ccal}T\one_{Q_\Ccal}\frac{d\widetilde\eta}{t}\Biggr)^2
        &\le  \Biggl(  \sum_{\substack{H\in\mathcal R_{k+m+1}(Q):\ H\subseteq I_\theta,\\
      \pi H\subsetneq\pi I_\theta}}  \widetilde\eta(\widehat H_\Ccal)\Pi(\pi H)^2   \Biggr)                                      \\
        &\quad \times   \frac1{\nu(Q_\Ccal)}    \Biggl(  \sum_{\substack{H\in\mathcal R_{k+m+1}(Q):\ H\subseteq I_\theta,\\
       \pi H\subsetneq\pi I_\theta}}  \frac1{\widetilde\eta(\widehat H_\Ccal)}  
       \Biggl(  \iint_{\widehat H_\Ccal}T\one_{Q_\Ccal}\frac{d\widetilde\eta}{t}  \Biggr)^2 \Biggr),
\end{align*}
where terms with zero $\widetilde\eta$-measure are omitted.

The second factor is controlled by $F^2$.  Indeed,
$$
        \frac1{\widetilde\eta(\widehat H_\Ccal)}
        \left( \iint_{\widehat H_\Ccal}T\one_{Q_\Ccal}\frac{d\widetilde\eta}{t} \right)^2
        \le   \iint_{\widehat H_\Ccal}\abs{T\one_{Q_\Ccal}}^2\,d\eta,
$$
by Cauchy's inequality and $d\widetilde\eta=t^2\,d\eta$.  
The boxes $\widehat H_\Ccal$ are disjoint subboxes of $\widehat{3Q}_\Ccal$, so the forward testing condition gives
$$
        \sum_{\substack{H\in\mathcal R_{k+m+1}(Q):\ H\subseteq I_\theta,\\
           \pi H\subsetneq\pi I_\theta}}  \frac1{\widetilde\eta(\widehat H_\Ccal)}
        \left( \iint_{\widehat H_\Ccal}T\one_{Q_\Ccal}\frac{d\widetilde\eta}{t}   \right)^2
        \le   F^2\nu(Q_\Ccal).
$$
Therefore
\begin{equation}\label{eq:L32-before-packing}
        L_{3,2} \lesssim_{m}  \delta^{-1}F^2  \sum_{k,Q,\theta} \sum_{\substack{H\in\mathcal R_{k+m+1}(Q):\ H\subseteq I_\theta,\\
                        \pi H\subsetneq\pi I_\theta}} \widetilde\eta(\widehat H_\Ccal)\Pi(\pi H)^2.
\end{equation}
By the principal packing estimate \eqref{eq:principal-packing-L32}, the last sum in \eqref{eq:L32-before-packing} is bounded by
$$
        C_{N,m} \sum_{S\in\Gcal}  \Pi(S)^2\widetilde\eta(\widehat S_\Ccal).
$$
The Carleson embedding \eqref{eq:carleson-embedding-Pi} therefore gives
\begin{equation}\label{eq:L32-estimate}
        L_{3,2} \lesssim_{m}  \delta^{-1}F^2\norm{g}_{L^2(\Ccal\times(0,\infty),d\eta)}^2.
\end{equation}
Combining \eqref{eq:L31-estimate} and \eqref{eq:L32-estimate},
\begin{equation}\label{eq:L3-estimate}
        L_3  \lesssim_{m} \bigl(C_{N,m}\delta^{-1}F^2+C_{N,\delta}B^2\bigr)
        \norm{g}_{L^2(\Ccal\times(0,\infty),d\eta)}^2.
\end{equation}
The integer $m$ was chosen above so that the local lower bound \eqref{eq:local-lower-bound} holds.  
After this choice is fixed, $\delta>0$ is chosen sufficiently small to absorb $L_1$.  
Thus $m$ and $\delta$ are structural constants, and \eqref{eq:L3-estimate} implies $L_3\lesssim_{N,m,\delta}(F^2+B^2)\norm{g}_{L^2(\Ccal\times(0,\infty),d\eta)}^2$.

Putting together the estimates for $L_1$, $L_2$, and $L_3$, and absorbing $L_1$, 
we obtain \eqref{eq:dual-estimate-target}.  Therefore
$$
        \norm{T}_{L^2(\Ccal,d\nu)\to L^2(\Ccal\times(0,\infty),d\eta)} \lesssim F+B.
$$
This proves \eqref{eq:scalar-norm-equivalence} and the sufficiency in Theorem~\ref{thm:scalar-chamber}.

\section{The main matrix theorem}\label{sec:matrix-theorem}

Throughout this section, $(\gamma,\mu)$ is wall-null, the component measures $\gamma_\tau,\mu_\rho$ are as in Section~\ref{sec:chamber-lifting}, 
and $T_{\rho\tau}$ denotes the chamber entry in \eqref{eq:intro-scalar-entry}.

\begin{theorem}\label{thm:main-chamber}
Let $\gamma$ be a locally finite positive Borel measure on $\R^N$, and let $\mu$ be a locally finite positive Borel measure on $\R^{N+1}_+$.  Assume that $(\gamma,\mu)$ is wall-null.  Then the following are equivalent.

\begin{enumerate}
\item There is a constant $\mathcal N$ such that
\begin{equation}\label{eq:main-bound}
        \norm{P_\gamma f}_{L^2(\R^{N+1}_+,d\mu)} \le   \mathcal N   \norm{f}_{L^2(\R^N,d\gamma)}
\end{equation}
for all $f\in L^2(\R^N,d\gamma)$.

\item For every $1\le \rho,\tau\le |G|$, the following two testing conditions hold uniformly over $Q\in\Dcal$ with $Q_\Ccal\ne\varnothing$:
\begin{equation}\label{eq:forward-test}
        \iint_{\widehat{3Q}_\Ccal}  \abs{T_{\rho\tau}\one_{Q_\Ccal}(x,t)}^2   \,d\mu_\rho(x,t)
        \le    F_{\rho\tau}^2\gamma_\tau(Q_\Ccal)
\end{equation}
and
\begin{equation}\label{eq:backward-test}
        \int_{3Q_\Ccal}   \abs{T_{\rho\tau}^*(t\one_{\widehat{Q}_\Ccal})(y)}^2  \,d\gamma_\tau(y)
        \le  B_{\rho\tau}^2  \iint_{\widehat{Q}_\Ccal} t^2\,d\mu_\rho(x,t).
\end{equation}
\end{enumerate}
Writing $F_{\rho\tau}$ and $B_{\rho\tau}$ for the least admissible constants in these tests, 
we use the same supremum convention as in \eqref{eq:scalar-forward-supremum}--\eqref{eq:scalar-backward-supremum}: 
cubes with zero denominator are omitted from the corresponding supremum.  
We have
\begin{equation}\label{eq:norm-comparison-main}
        \max_{\rho,\tau}(F_{\rho\tau}+B_{\rho\tau})  \lesssim_{N,R,\kappa}
        \mathcal N \lesssim_{|G|,N,R,\kappa}   \max_{\rho,\tau}(F_{\rho\tau}+B_{\rho\tau}).
\end{equation}
The first comparison has constants independent of $|G|$.  
The second comparison is the finite matrix bound and its constant deteriorates with the group order;
in the squared estimate in the proof this appears as a factor $|G|^2$.
\end{theorem}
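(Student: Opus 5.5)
The plan is to reduce everything to Theorem~\ref{thm:scalar-chamber} through the chamber lifting of Lemma~\ref{lem:L2-chamber-isometry} and the matrix identity of Lemma~\ref{lem:matrix-representation}. Both directions are handled entry by entry, and the only $|G|$-dependence enters when the $|G|^2$ entries are summed.

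\emph{Necessity, (1)$\Rightarrow$(2).} Fix $(\rho,\tau)$. Given $h\ge0$ in $L^2(\Ccal,d\gamma_\tau)$, define $f$ on $\R^N$ by $f(\sigma_\tau y)=h(y)$ on $\sigma_\tau\Ccal$ and $f=0$ elsewhere. Then $\norm{f}_{L^2(d\gamma)}=\norm{h}_{L^2(d\gamma_\tau)}$ by Lemma~\ref{lem:L2-chamber-isometry}. By \eqref{eq:matrix-representation}, $(V_\mu P_\gamma f)_\rho=T_{\rho\tau}h$, since all other components of $U_\gamma f$ vanish. Hence
$$
\norm{T_{\rho\tau}h}_{L^2(d\mu_\rho)}\le\norm{V_\mu P_\gamma f}\le\mathcal N\norm{h}.
$$
Because the kernel is positive, this extends to signed $h$ via $|T_{\rho\tau}h|\le T_{\rho\tau}|h|$. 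Thus $\norm{T_{\rho\tau}}\le\mathcal N$. The necessity half of Theorem~\ref{thm:scalar-chamber}, which was proved above using Lemma~\ref{lem:forward-implies-square-finiteness}, then gives $F_{\rho\tau}+B_{\rho\tau}\lesssim_{N,R,\kappa}\norm{T_{\rho\tau}}\le\mathcal N$. No factor of $|G|$ appears. If $\gamma_\tau\equiv0$, both constants are zero by convention.

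\emph{Sufficiency, (2)$\Rightarrow$(1).} For each pair, the sufficiency half of Theorem~\ref{thm:scalar-chamber} gives $\norm{T_{\rho\tau}}\lesssim_{N,R,\kappa}F_{\rho\tau}+B_{\rho\tau}\le M:=\max(F+B)$. The constant does not depend on $(\rho,\tau)$ because the kernel bound of Lemma~\ref{lem:matrix-kernel-upper} and the comparisons in Lemmas~\ref{lem:kernel-maximum-comparison} and \ref{lem:vertical-comparison} are uniform in the chamber indices. First take $f\ge0$. By \eqref{eq:matrix-representation}, Lemma~\ref{lem:L2-chamber-isometry}, and Cauchy--Schwarz over $\tau$,
$$
\norm{P_\gamma f}^2_{L^2(d\mu)}=\sum_\rho\Bigl\|\sum_\tau T_{\rho\tau}(U_\gamma f)_\tau\Bigr\|^2\le|G|\sum_{\rho,\tau}\norm{T_{\rho\tau}}^2\norm{(U_\gamma f)_\tau}^2\lesssim|G|^2M^2\norm{f}^2_{L^2(d\gamma)} .
$$
For real $f$, split $f=f^+-f^-$. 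Since every entry is bounded, each side of the signed identity in Lemma~\ref{lem:matrix-representation} is finite almost everywhere, so the identity holds and the same bound follows up to an absolute constant. Complex $f$ is handled by taking real and imaginary parts. This gives $\mathcal N\lesssim_{|G|,N,R,\kappa}M$, with the factor $|G|^2$ appearing in the squared estimate.

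The main point needing care is measure-theoretic rather than analytic. The identity for non-negative $f$ is exact by Tonelli, so the positive case carries the whole estimate. The signed case is only needed to extend the bound to all of $L^2$, and it is available once finiteness almost everywhere is known. One must also check that the convention for a zero chamber measure $\gamma_\tau$ is consistent with the matrix sum: such entries act on the zero space and contribute nothing.
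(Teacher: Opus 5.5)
Your proposal is correct and follows essentially the paper's proof. Necessity works by placing $h$ on a single chamber to get $\norm{T_{\rho\tau}}\le\mathcal N$ and then reading off the forward and backward tests, with Lemma~\ref{lem:forward-implies-square-finiteness} supplying the square-finiteness. Sufficiency applies the scalar theorem entrywise and then uses the chamber isometry and Cauchy--Schwarz over $\tau$, which produces the $|G|^2$ factor. The only cosmetic difference is that you run the matrix estimate for $f\ge0$ first and then split $f=f^+-f^-$, losing an absolute constant, whereas the paper applies the signed matrix identity directly.
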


\begin{proof}
\emph{Necessity.}
Assume first that $P_\gamma$ satisfies \eqref{eq:main-bound}.  
Fix $1\le \rho,\tau\le |G|$ and first take $h\ge0$ in $L^2(\Ccal,d\gamma_\tau)$.  Put
$$
        f(\sigma_\tau x)=h(x)\quad (x\in\Ccal),  \qquad   f=0\quad\text{on the other chambers}.
$$
Then $(U_\gamma f)_\tau=h$, while $(U_\gamma f)_{\tau'}=0$ for $\tau'\ne\tau$, and $\norm{f}_{L^2(d\gamma)}=\norm{h}_{L^2(d\gamma_\tau)}$.  
Reading the output only on $\sigma_\rho\Ccal\times(0,\infty)$ and using the non-negative form of Lemma~\ref{lem:matrix-representation},
$$
        \norm{T_{\rho\tau}h}_{L^2(\Ccal\times(0,\infty),d\mu_\rho)}
        \le \mathcal N   \norm{h}_{L^2(\Ccal,d\gamma_\tau)}.
$$
For general real or complex $h$, the same bound follows from positivity because $\abs{T_{\rho\tau}h}\le T_{\rho\tau}\abs{h}$.

The forward testing condition \eqref{eq:forward-test} follows by taking $h=\one_{Q_\Ccal}$ 
and then restricting the resulting whole-space output norm to the smaller box $\widehat{3Q}_\Ccal$ by monotonicity of the non-negative
integral.  
If $\gamma_\tau\equiv0$, both tests for this entry are trivial. 
Otherwise Lemma~\ref{lem:forward-implies-square-finiteness}, applied to this scalar entry, gives $t\one_{\widehat Q_\Ccal}\in L^2(d\mu_\rho)$.  
The Tonelli identity above the matrix kernel bound shows that the displayed formula for $T_{\rho\tau}^*$ 
is the Hilbert-space adjoint, so the adjoint operator bound gives the backward estimate.  
Restricting its left-hand side from $\Ccal$ to $3Q_\Ccal$ is again just monotonicity.  Thus
$$
        F_{\rho\tau}+B_{\rho\tau} \lesssim_{N,R,\kappa}  \mathcal N.
$$
Taking the maximum over $(\rho,\tau)$ proves the first inequality in \eqref{eq:norm-comparison-main}.

\emph{Sufficiency.}
Conversely, assume the matrix testing conditions hold.  By Theorem~\ref{thm:scalar-chamber},
$$
       \norm{T_{\rho\tau}h}_{L^2(\Ccal\times(0,\infty), d\mu_\rho)}
        \lesssim_{N,R,\kappa} (F_{\rho\tau}+B_{\rho\tau})  \norm{h}_{L^2(\Ccal, d\gamma_\tau)}.
$$
Let $f\in L^2(\R^N,d\gamma)$.  
The scalar bounds just proved make the signed form of Lemma~\ref{lem:matrix-representation} applicable component by component.
By Lemmas~\ref{lem:L2-chamber-isometry} and~\ref{lem:matrix-representation},
\begin{align*}
        \norm{P_\gamma f}_{L^2(\R^{N+1}_+,d\mu)}^2 &= \sum_{\rho=1}^{|G|}
        \norm{\sum_{\tau=1}^{|G|} T_{\rho\tau}(U_\gamma f)_\tau}_{L^2(\Ccal\times(0,\infty),d\mu_\rho)}^2      \\
        &\le |G| \sum_{\rho=1}^{|G|}\sum_{\tau=1}^{|G|}   
        \norm{T_{\rho\tau}(U_\gamma f)_\tau}_{L^2(\Ccal\times(0,\infty),d\mu_\rho)}^2                    \\
        &\lesssim_{N,R,\kappa} |G| \sum_{\rho=1}^{|G|}\sum_{\tau=1}^{|G|}
        (F_{\rho\tau}+B_{\rho\tau})^2 \norm{(U_\gamma f)_\tau}_{L^2(\Ccal,d\gamma_\tau)}^2                              \\
        &\lesssim_{N,R,\kappa}    |G|^2\bigl(\max_{\rho,\tau}(F_{\rho\tau}+B_{\rho\tau})\bigr)^2
        \sum_{\tau=1}^{|G|}   \norm{(U_\gamma f)_\tau}_{L^2(\Ccal,d\gamma_\tau)}^2                               \\
        &=  |G|^2\bigl(\max_{\rho,\tau}(F_{\rho\tau}+B_{\rho\tau})\bigr)^2   \norm{f}_{L^2(\R^N,d\gamma)}^2.
\end{align*}
The squared estimate displays the finite matrix loss explicitly: 
one factor $|G|$ comes from the discrete Cauchy--Schwarz step in the row sum, 
and the second comes from summing over the $|G|$ possible output chambers before collapsing to the maximum over entries.  
This proves the second inequality in \eqref{eq:norm-comparison-main} and completes the proof.
\end{proof}

\section{Consequences and relation to the orbit formulation}\label{sec:consequences}

The chamber testing theorem is the main result of the paper.  
We finish with three consequences: the invariant case, the single-chamber case, and the relation with orbit-box tests.  
The last one is not a replacement for the chamber theorem; it shows exactly where orbit-mass comparison enters.

For $G$-invariant measures the chamber matrix has fewer distinct entries; only the relative chamber matters.

\begin{corollary}
\label{cor:G-invariant-relative-tests}
Assume the hypotheses of Theorem~\ref{thm:main-chamber}.  
Suppose in addition that $\gamma$ is $G$-invariant on $\R^N$ and that $\mu$ is $G$-invariant in the spatial variable on $\R^{N+1}_+$.  
Let
$$
        d\gamma_\Ccal:=d\gamma|_\Ccal,  \qquad  d\mu_\Ccal:=d\mu|_{\Ccal\times(0,\infty)}.
$$
For $s\in G$, define the relative chamber operator
$$
        S_s h(x,t):=\int_\Ccal p_t(x,s y)h(y)\,d\gamma_\Ccal(y), \qquad x\in\Ccal.
$$
Then the two-weight inequality \eqref{eq:main-bound} is equivalent to 
the forward and backward testing conditions for the $|G|$ operators $S_s$, $s\in G$, 
with respect to the measures $\gamma_\Ccal$ and $\mu_\Ccal$.
Moreover the norm is comparable, up to constants depending only on $|G|$, to
$$
        \max_{s\in G}(F_s+B_s),
$$
where $F_s$ and $B_s$ are the corresponding scalar testing constants.
\end{corollary}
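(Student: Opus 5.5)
The plan is to reduce the corollary to Theorem~\ref{thm:main-chamber} by showing that, under $G$-invariance, each matrix entry $T_{\rho\tau}$ is literally the relative operator $S_s$ with $s=\sigma_\rho^{-1}\sigma_\tau$, acting between the same pair of measures $\gamma_\Ccal,\mu_\Ccal$.

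\emph{Step 1: the chamber measures collapse.} If $\gamma$ is $G$-invariant, then for Borel $E\subseteq\Ccal$ we have $\gamma_\tau(E)=\gamma(\sigma_\tau E)=\gamma(E)=\gamma_\Ccal(E)$. Likewise $\mu_\rho(S)=\mu((\sigma_\rho\times\id)S)=\mu(S)=\mu_\Ccal(S)$. Hence all $L^2(\Ccal,d\gamma_\tau)$ coincide with $L^2(\Ccal,d\gamma_\Ccal)$, and all $L^2(\Ccal\times(0,\infty),d\mu_\rho)$ coincide with $L^2(\Ccal\times(0,\infty),d\mu_\Ccal)$.

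\emph{Step 2: the kernels depend only on the relative element.} By the $G$-invariance \eqref{eq:p-G-invariance},
$$
K_t^{\rho\tau}(x,y)=p_t(\sigma_\rho x,\sigma_\tau y)=p_t(x,\sigma_\rho^{-1}\sigma_\tau y).
$$
Therefore $T_{\rho\tau}=S_{s}$ with $s=\sigma_\rho^{-1}\sigma_\tau$, and also $T_{\rho\tau}^*=S_s^*$, where the adjoint is taken with respect to $\gamma_\Ccal,\mu_\Ccal$. Consequently the testing quantities in \eqref{eq:forward-test}--\eqref{eq:backward-test} for the pair $(\rho,\tau)$ are exactly the forward and backward tests for $S_s$, so $F_{\rho\tau}=F_s$ and $B_{\rho\tau}=B_s$. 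Moreover every $s\in G$ arises from some pair, for instance $\rho=$ the index of the identity and $\sigma_\tau=s$, and conversely each pair yields some $s$. Hence
$$
\max_{\rho,\tau}(F_{\rho\tau}+B_{\rho\tau})=\max_{s\in G}(F_s+B_s).
$$

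\emph{Step 3: conclude.} Insert this identity into \eqref{eq:norm-comparison-main}; this gives both the equivalence of the two statements and the norm comparability, with constants depending on $|G|$ (and on the structural data $N,R,\kappa$, which the statement absorbs). The wall-null hypothesis is inherited from Theorem~\ref{thm:main-chamber}.

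There is no real obstacle. The only care needed is in Step 1: the pullback definitions must be read on Borel subsets of $\Ccal$, and spatial $G$-invariance of $\mu$ must be used in the form $\mu((\sigma\times\id)S)=\mu(S)$. The remaining check is that the supremum conventions, namely omitting cubes with zero denominator, match; they do, since the denominators $\gamma_\tau(Q_\Ccal)=\gamma_\Ccal(Q_\Ccal)$ and $\iint_{\widehat Q_\Ccal}t^2\,d\mu_\rho=\iint_{\widehat Q_\Ccal}t^2\,d\mu_\Ccal$ coincide.
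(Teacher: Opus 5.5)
Your proposal is correct and follows the paper's proof. Both arguments use $G$-invariance to collapse every chamber measure to $\gamma_\Ccal,\mu_\Ccal$, then use $p_t(\sigma_\rho x,\sigma_\tau y)=p_t(x,\sigma_\rho^{-1}\sigma_\tau y)$ to identify $T_{\rho\tau}=S_{\sigma_\rho^{-1}\sigma_\tau}$, and finally read the result off Theorem~\ref{thm:main-chamber} by grouping the $|G|^2$ entries according to the relative element; your additional checks (that every $s\in G$ arises, that the zero-denominator conventions match, and that the constants also carry the structural $N,R,\kappa$ dependence) are details the paper leaves implicit.
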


\begin{proof}
If $\gamma$ and $\mu$ are $G$-invariant, then all chamber component measures are the same measure, 
identified with $\gamma_\Ccal$ and $\mu_\Ccal$.  By the $G$-invariance of the Dunkl--Poisson kernel, see \eqref{eq:p-G-invariance}, we have
$$
        p_t(\sigma_\rho x,\sigma_\tau y) =p_t(x,\sigma_\rho^{-1}\sigma_\tau y).
$$
Hence
$$
        T_{\rho\tau}=S_{\sigma_\rho^{-1}\sigma_\tau}.
$$
The assertion is therefore exactly Theorem~\ref{thm:main-chamber}, 
with the $|G|^2$ entries grouped according to the relative group element $s:=\sigma_\rho^{-1}\sigma_\tau$.
\end{proof}

At the other extreme, if both measures live in one chamber, the matrix reduces to one scalar entry.

\begin{corollary}\label{cor:single-chamber-support}
Suppose that, for some fixed indices $\tau_0$ and $\rho_0$,
$$
        \gamma\bigl(\R^N\setminus\sigma_{\tau_0}\Ccal\bigr)=0, \qquad
        \mu\bigl(\R^{N+1}_+\setminus(\sigma_{\rho_0}\Ccal\times(0,\infty))\bigr)=0.
$$
Then the full two-weight inequality \eqref{eq:main-bound} is equivalent to boundedness of the single scalar chamber entry
$$
        T_{\rho_0\tau_0}:L^2(\Ccal,d\gamma_{\tau_0}) \longrightarrow
        L^2(\Ccal\times(0,\infty),d\mu_{\rho_0}).
$$
Equivalently, it is characterised by the forward and backward tests for this single entry.  
The constants are comparable with no loss except for the fixed normalization coming from the chamber identification.
\end{corollary}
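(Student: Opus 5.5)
The plan is to reduce everything to Theorem~\ref{thm:main-chamber} by showing that all chamber component measures except $\gamma_{\tau_0}$ and $\mu_{\rho_0}$ vanish. The only surviving matrix entry is then $T_{\rho_0\tau_0}$.

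First, the hypotheses say that $\gamma$ is concentrated on the open chamber $\sigma_{\tau_0}\Ccal$ and $\mu$ on $\sigma_{\rho_0}\Ccal\times(0,\infty)$. Both sets are disjoint from the walls, so the pair $(\gamma,\mu)$ is automatically wall-null in the sense of Definition~\ref{def:wall-null}. Hence Lemmas~\ref{lem:L2-chamber-isometry} and~\ref{lem:matrix-representation} apply.

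Next I would check the component measures. For $\tau\ne\tau_0$ the chambers $\sigma_\tau\Ccal$ and $\sigma_{\tau_0}\Ccal$ are disjoint, so $\gamma_\tau(E)=\gamma(\sigma_\tau E)=0$ for every Borel $E\subseteq\Ccal$. Likewise $\mu_\rho\equiv0$ for $\rho\ne\rho_0$. Consequently:
\begin{itemize}
\item $L^2(\Ccal,d\gamma_\tau)=\{0\}$ for $\tau\ne\tau_0$, and $L^2(\Ccal\times(0,\infty),d\mu_\rho)=\{0\}$ for $\rho\ne\rho_0$;
\item the isometric identifications of Lemma~\ref{lem:L2-chamber-isometry} collapse to
$L^2(\R^N,d\gamma)\simeq L^2(\Ccal,d\gamma_{\tau_0})$ via $f\mapsto f\circ\sigma_{\tau_0}$, and similarly for $\mu$ with $\sigma_{\rho_0}$.
\end{itemize}
Under these identifications, the matrix representation \eqref{eq:matrix-representation} gives
$V_\mu P_\gamma f=T_{\rho_0\tau_0}(U_\gamma f)_{\tau_0}$ in the single nontrivial output component. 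All other entries act on or into the zero space.

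With this, the two operator norms coincide: $\mathcal N=\|T_{\rho_0\tau_0}\|$ exactly. For the inequality $\le$, one uses the computation in the sufficiency part of Theorem~\ref{thm:main-chamber} with only one nonzero term. This avoids the $|G|^2$ loss, because there is neither a row sum nor a sum over output chambers. For the inequality $\ge$, one uses the necessity argument of the same theorem. This proves the equivalence with boundedness of the single entry, with no constant loss.

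The characterisation by tests then follows from Theorem~\ref{thm:scalar-chamber} applied to the pair $(\rho_0,\tau_0)$. It gives $\|T_{\rho_0\tau_0}\|\simeq F_{\rho_0\tau_0}+B_{\rho_0\tau_0}$ with constants depending only on $N$, $R$ and $\kappa$. The tests for every other pair are vacuous under the zero-measure convention stated with Theorem~A.

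The only point needing care is the signed form of Lemma~\ref{lem:matrix-representation}. I would handle it as in the proof of Theorem~\ref{thm:main-chamber}: first work with non-negative $f$, then apply $|T h|\le T|h|$. I do not expect any real obstacle here; the proof is essentially bookkeeping.
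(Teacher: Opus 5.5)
Your proposal is correct and follows the paper's proof. The support hypotheses make every chamber component except $\gamma_{\tau_0}$ and $\mu_{\rho_0}$ vanish, so \eqref{eq:matrix-representation} collapses to the single entry $T_{\rho_0\tau_0}$. The isometries of Lemma~\ref{lem:L2-chamber-isometry} and Theorem~\ref{thm:scalar-chamber} then give the result. Your two additional observations are also valid and make explicit what the paper leaves implicit: wall-nullness is automatic here, and the isometries give the exact equality $\mathcal N=\|T_{\rho_0\tau_0}\|$.
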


\begin{proof}
Under the support assumptions all chamber components except $\gamma_{\tau_0}$ and $\mu_{\rho_0}$ vanish.  
Hence the matrix identity \eqref{eq:matrix-representation} contains only the entry $T_{\rho_0\tau_0}$.  
The assertion follows immediately from the scalar Theorem~\ref{thm:scalar-chamber} and the isometries in Lemma~\ref{lem:L2-chamber-isometry}.
\end{proof}

\begin{remark}\label{rem:asymmetric-measures}
Corollary~\ref{cor:single-chamber-support} illustrates why the chamber tests are genuinely more flexible than orbit-box tests.  
If the measures are concentrated in one chamber, 
the componentwise chamber formulation still has the correct right-hand side.  
By contrast, an orbit saturation mixes this mass with all reflected components.  
When other chamber components have zero mass the comparison hypotheses in 
Proposition~\ref{prop:orbit-tests-imply-chamber-tests} are either vacuous for those components or 
impose no useful control on the non-zero component; when they have very small positive mass, 
the comparison constants may become arbitrarily large.  
This is why the chamber formulation keeps the componentwise tests instead of using orbit-boxes as the primary theorem.
\end{remark}

We finally relate the chamber tests to orbit-boxes.  For $Q\in\Dcal$ with $Q_\Ccal\ne\varnothing$ and for a fixed dilation factor $a>0$, set
$$
        \Ocal_\Ccal(aQ):=\bigcup_{\tau=1}^{|G|} \sigma_\tau((aQ)_\Ccal),
        \qquad
        \Ocal_\Ccal^b(aQ):=\Ocal_\Ccal(aQ)\times(0,\ell(Q)].
$$
In particular, the vertical height of $\Ocal_\Ccal^b(3Q)$ is $\ell(Q)$, matching the convention for $\widehat{3Q}_\Ccal$.  
Thus $\Ocal_\Ccal(Q)$ is the chamber-orbit saturation of $Q_\Ccal$.  
Define the full adjoint Poisson operator by
$$
        P_\mu^*F(y):=\iint_{\R^{N+1}_+}p_t(x,y)F(x,t)\,d\mu(x,t).
$$

The orbit-box formulation can be recovered from the chamber theorem when the component masses 
of the two measures are comparable along orbits.

\begin{proposition}
\label{prop:orbit-tests-imply-chamber-tests}
Assume that $(\gamma,\mu)$ is wall-null and that the following orbit-box tests hold uniformly over dyadic cubes $Q$ with $Q_\Ccal\ne\varnothing$:
\begin{equation}\label{eq:orbit-forward-test}
        \iint_{\Ocal_\Ccal^b(3Q)} |P_\gamma\one_{\Ocal_\Ccal(Q)}(x,t)|^2\,d\mu(x,t)\le F_{\rm orb}^2\gamma(\Ocal_\Ccal(Q)),
\end{equation}
\begin{equation}\label{eq:orbit-backward-test}
        \int_{\Ocal_\Ccal(3Q)} |P_\mu^*(t\one_{\Ocal_\Ccal^b(Q)})(y)|^2\,d\gamma(y)\le B_{\rm orb}^2\iint_{\Ocal_\Ccal^b(Q)} t^2\,d\mu(x,t).
\end{equation}
Assume also the componentwise orbit-mass comparisons
\begin{equation}\label{eq:orbit-gamma-comparison}
        \gamma(\Ocal_\Ccal(Q))\le A_\gamma\,\gamma_\tau(Q_\Ccal),   \qquad 1\le\tau\le |G|,
\end{equation}
whenever $\gamma_\tau(Q_\Ccal)>0$, and
\begin{equation}\label{eq:orbit-mu-comparison}
        \iint_{\Ocal_\Ccal^b(Q)}t^2\,d\mu(x,t)   \le A_\mu\iint_{\widehat Q_\Ccal}t^2\,d\mu_\rho(x,t),   \qquad 1\le\rho\le |G|,
\end{equation}
whenever the right-hand side is non-zero, for some finite constants $A_\gamma$ and $A_\mu$.  
Then the chamber testing conditions \eqref{eq:forward-test} and \eqref{eq:backward-test} hold with
$$
        F_{\rho\tau}\le A_\gamma^{1/2}F_{\rm orb},  \qquad    B_{\rho\tau}\le A_\mu^{1/2}B_{\rm orb}.
$$
Consequently Theorem~\ref{thm:main-chamber} recovers the orbit formulation under the additional comparability assumptions \eqref{eq:orbit-gamma-comparison}
and \eqref{eq:orbit-mu-comparison}.
\end{proposition}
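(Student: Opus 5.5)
The plan is to compare each chamber test pointwise with the corresponding orbit-box test, using only positivity of the kernel and the chamber identifications of Lemma~\ref{lem:L2-chamber-isometry}. Fix $\rho,\tau$ and a dyadic cube $Q$ with $Q_\Ccal\ne\varnothing$. The key observation is a pair of inclusions. The chamber piece $\sigma_\tau(Q_\Ccal)$ is contained in $\Ocal_\Ccal(Q)$. Likewise $(\sigma_\rho\times\id)\widehat{3Q}_\Ccal$ is contained in $\Ocal^b_\Ccal(3Q)$, since $\sigma_\rho((3Q)_\Ccal)\subseteq\Ocal_\Ccal(3Q)$ and the heights agree, both being $\ell(Q)$.

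\emph{Forward test.} First I would rewrite the chamber entry as a full-space operator:
$$
T_{\rho\tau}\one_{Q_\Ccal}(x,t)=\int_{\sigma_\tau(Q_\Ccal)}p_t(\sigma_\rho x,y)\,d\gamma(y)=P_\gamma\one_{\sigma_\tau(Q_\Ccal)}(\sigma_\rho x,t).
$$
This is the change of variables in the proof of Lemma~\ref{lem:matrix-representation}. Positivity of $p_t$ and the first inclusion give $T_{\rho\tau}\one_{Q_\Ccal}(x,t)\le P_\gamma\one_{\Ocal_\Ccal(Q)}(\sigma_\rho x,t)$. Next I integrate over $\widehat{3Q}_\Ccal$ against $d\mu_\rho$ and transport to $\mu$ via $\sigma_\rho\times\id$. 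The image box lies in $\Ocal^b_\Ccal(3Q)$, so \eqref{eq:orbit-forward-test} bounds the left side of \eqref{eq:forward-test} by $F_{\rm orb}^2\gamma(\Ocal_\Ccal(Q))$. If $\gamma_\tau(Q_\Ccal)>0$, then \eqref{eq:orbit-gamma-comparison} gives at most $A_\gamma F_{\rm orb}^2\gamma_\tau(Q_\Ccal)$. If $\gamma_\tau(Q_\Ccal)=0$, the left side vanishes identically, so the cube is omitted from the supremum as in \eqref{eq:scalar-forward-supremum}.

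\emph{Backward test.} The same argument works with the roles reversed. For $y\in\Ccal$,
$$
T^*_{\rho\tau}(t\one_{\widehat Q_\Ccal})(y)=\iint_{\sigma_\rho(Q_\Ccal)\times(0,\ell(Q)]}p_t(x,\sigma_\tau y)\,t\,d\mu(x,t)\le P^*_\mu\bigl(t\one_{\Ocal^b_\Ccal(Q)}\bigr)(\sigma_\tau y).
$$
This inequality uses symmetry \eqref{eq:p-symmetry} and positivity. I then square, integrate over $3Q_\Ccal$ against $\gamma_\tau$, and push forward by $\sigma_\tau$ into $\sigma_\tau((3Q)_\Ccal)\subseteq\Ocal_\Ccal(3Q)$. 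By \eqref{eq:orbit-backward-test} this is at most $B_{\rm orb}^2\iint_{\Ocal^b_\Ccal(Q)}t^2\,d\mu$. Applying \eqref{eq:orbit-mu-comparison} then gives $A_\mu B_{\rm orb}^2\iint_{\widehat Q_\Ccal}t^2\,d\mu_\rho$ when this quantity is positive. Cubes where it vanishes are omitted by convention \eqref{eq:scalar-backward-supremum}.

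Finally I take square roots and the supremum over cubes. This gives $F_{\rho\tau}\le A_\gamma^{1/2}F_{\rm orb}$ and $B_{\rho\tau}\le A_\mu^{1/2}B_{\rm orb}$, and Theorem~\ref{thm:main-chamber} then yields boundedness. I expect no real obstacle here. The only step needing care is the bookkeeping of the chamber transports: checking that $\sigma_\tau$ maps $(3Q)_\Ccal$ into $\Ocal_\Ccal(3Q)$ and preserves the height convention, and treating zero-denominator cubes through the supremum conventions rather than the comparison hypotheses. Wall-nullness ensures that the pushforward identities $\int_\Ccal h\circ\sigma_\tau^{-1}\,d\gamma_\tau=\int_{\sigma_\tau\Ccal}h\,d\gamma$ hold without extra wall terms.
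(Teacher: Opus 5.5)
Your proposal is correct and follows the paper's proof essentially step for step. Like the paper, you rewrite the chamber entry and its adjoint as $P_\gamma$ and $P_\mu^*$ composed with $\sigma_\rho$, $\sigma_\tau$, enlarge the input set and the integration region by positivity and the inclusions $\sigma_\tau Q_\Ccal\subseteq\Ocal_\Ccal(Q)$ and $(\sigma_\rho\times\id)\widehat{3Q}_\Ccal\subseteq\Ocal^b_\Ccal(3Q)$, apply the orbit tests and then the mass comparisons, and dispose of zero-denominator cubes separately.

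Two cosmetic slips. The transport identity should read $\int_\Ccal h\circ\sigma_\tau\,d\gamma_\tau=\int_{\sigma_\tau\Ccal}h\,d\gamma$, with $\sigma_\tau$ rather than $\sigma_\tau^{-1}$, and it holds by definition without wall-nullness. Also, symmetry of $p_t$ is not needed in the backward step, since $P_\mu^*$ is already defined with $p_t(x,y)$ integrated in $x$.
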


\begin{proof}
We prove the forward estimate first.  If $\gamma_\tau(Q_\Ccal)=0$,  then $T_{\rho\tau}\one_{Q_\Ccal}=0$ $\mu_\rho$-a.e., and there is nothing to prove.  
Assume therefore that $\gamma_\tau(Q_\Ccal)>0$.  The relevant pushforward identity is
$$
        d\mu_\rho(x,t)=d\mu(\sigma_\rho x,t), \qquad  T_{\rho\tau}\one_{Q_\Ccal}(x,t)  =P_\gamma\one_{\sigma_\tau Q_\Ccal}(\sigma_\rho x,t).
$$
After the change of variables $u=\sigma_\rho x$, the left-hand side of \eqref{eq:forward-test} is
$$
        \iint_{(\sigma_\rho\times\id)\widehat{3Q}_\Ccal} |P_\gamma\one_{\sigma_\tau Q_\Ccal}(u,t)|^2\,d\mu(u,t).
$$
Since
$$
        \sigma_\tau Q_\Ccal\subseteq \Ocal_\Ccal(Q),   \qquad  (\sigma_\rho\times\id)\widehat{3Q}_\Ccal\subseteq \Ocal_\Ccal^b(3Q),
$$
and the kernel is positive, this is bounded by the left-hand side of \eqref{eq:orbit-forward-test}.  Hence
$$
        \iint_{\widehat{3Q}_\Ccal}|T_{\rho\tau}\one_{Q_\Ccal}|^2\,d\mu_\rho   \le F_{\rm orb}^2\gamma(\Ocal_\Ccal(Q))  \le A_\gamma F_{\rm orb}^2\gamma_\tau(Q_\Ccal).
$$
This gives the forward chamber test.

For the backward estimate, if
$$
        \iint_{\widehat Q_\Ccal}t^2\,d\mu_\rho=0,
$$
then $\mu_\rho(\widehat Q_\Ccal)=0$, since $t>0$ on $\widehat Q_\Ccal$, and the chamber backward test is trivial.  
Assume therefore that this integral is non-zero.  The adjoint identity used here is
$$
        T_{\rho\tau}^*(t\one_{\widehat Q_\Ccal})(y)  =P_\mu^*\left(t\one_{(\sigma_\rho\times\id)\widehat Q_\Ccal}\right)(\sigma_\tau y).
$$
Changing variables $v=\sigma_\tau y$, the left-hand side of \eqref{eq:backward-test} becomes
$$
        \int_{\sigma_\tau 3Q_\Ccal}  \left|P_\mu^*\left(t\one_{(\sigma_\rho\times\id)\widehat Q_\Ccal}\right)(v)\right|^2   \,d\gamma(v).
$$
Again
$$
        (\sigma_\rho\times\id)\widehat Q_\Ccal\subseteq \Ocal_\Ccal^b(Q),   \qquad  \sigma_\tau 3Q_\Ccal\subseteq \Ocal_\Ccal(3Q),
$$
and positivity gives domination by the left-hand side of \eqref{eq:orbit-backward-test}.  
Using \eqref{eq:orbit-mu-comparison}, we get
$$
        \int_{3Q_\Ccal}\left|T_{\rho\tau}^*(t\one_{\widehat Q_\Ccal})\right|^2   \,d\gamma_\tau
        \le B_{\rm orb}^2\iint_{\Ocal_\Ccal^b(Q)}t^2\,d\mu
        \le A_\mu B_{\rm orb}^2 \iint_{\widehat Q_\Ccal}t^2\,d\mu_\rho.
$$
The backward chamber test follows, and the proof is complete.
\end{proof}

\begin{remark}
The assumptions \eqref{eq:orbit-gamma-comparison} and  \eqref{eq:orbit-mu-comparison} are precisely the kind of extra hypotheses which the chamber theorem is designed to avoid.  
The proposition should therefore be read as a consistency check with the older orbit formulation, not as an additional assumption in the main theorem.  
Without such comparisons the orbit tests do not have the correct right-hand sides for individual chamber components; 
this is why the componentwise chamber tests are the intrinsic formulation.
\end{remark}

This also clarifies the role of the Dunkl metric.  The metric is not an obstruction after lifting: by \eqref{eq:chamber-orbit-identity}, 
it is exactly the Euclidean distance inside the chamber variables.  
What remains is a finite matrix of ordinary positive Poisson-type operators.


\appendix
\section{Dyadic packing inputs under chamber restriction}
\label{sec:dyadic-packing}

This appendix proves the finite dyadic packing facts used in Section~\ref{sec:scalar-chamber}.  
The level range, the spatial top cube, and the scale range are the finite truncations introduced in the proof of Theorem~\ref{thm:scalar-chamber}.  
All constants are independent of those truncations.

The chamber restriction changes the measures of the boxes, but not the dyadic tree in which the stopping cubes live.  
The finite count below is the restricted principal-cube formulation of Sawyer--Wheeden~\cite[p.~861]{SawyerWheeden}, 
in the notation of the present proof.  
The same stopping-time count is also presented in Lacey's primer~\cite{LaceyPrimer}.  
Sawyer's theorem remains the source of the classical two-weight Poisson testing result~\cite{SawyerPoisson}; only the finite packing mechanism is used here.

Let $\nu$ be a locally finite positive Borel measure on $\Ccal$.  
Let $\Lambda$ be a positive Borel measure on $\Ccal\times(0,\infty)$, finite on every chamber box in the fixed finite configuration.  
In Section~\ref{sec:scalar-chamber},
$$
        d\Lambda=d\widetilde\eta:=t^2\,d\eta,
$$
and the required box finiteness follows from Lemma~\ref{lem:forward-implies-square-finiteness}.

For an ambient dyadic cube $J\subseteq\R^N$, write
$$
        J_\Ccal:=J\cap\Ccal,  \qquad   \widehat J_\Ccal:=J_\Ccal\times(0,\ell(J)].
$$
Let $\Omega_k\subseteq\Ccal$ be the nested level sets from Section~\ref{sec:scalar-chamber}, let $\mathcal I_k=\mathcal I(\Omega_k)$ be the chamber Whitney families, and put
$$
        \widehat\Omega_k:=\bigcup_{Q\in\mathcal I_k}\widehat Q_\Ccal.
$$
For $Q\in\mathcal I_k$, define
$$
        F_k(Q):=Q_\Ccal\cap(\Omega_{k+m}\setminus\Omega_{k+m+1}).
$$
Write
$$
        3Q=\bigsqcup_{\theta=1}^{3^N}I_\theta
$$
for the exact half-open dyadic partition into cubes of side length $\ell(Q)$. 
For a fixed integer $m\ge1$, let $\mathcal R_{k+m+1}(Q)$ be the family of maximal ambient dyadic cubes $H$ satisfying
$$
        H\subseteq3Q,  \qquad H_\Ccal\ne\varnothing, \qquad \ell(H)\le\ell(Q), \qquad 3H_\Ccal\subseteq\Omega_{k+m+1}.
$$
Finally, let $\Gcal$ be the principal-cube family associated with a non-negative function $h$ and the averages
$$
        \Pi(J):=
        \begin{cases}
        \displaystyle  \frac1{\Lambda(\widehat J_\Ccal)}   \iint_{\widehat J_\Ccal}h\,d\Lambda,  &\Lambda(\widehat J_\Ccal)>0,\\
        0,&\Lambda(\widehat J_\Ccal)=0.
        \end{cases}
$$
The stopping children of $S\in\Gcal$ are the maximal dyadic subcubes $S'\subsetneq S$ for which $\Pi(S')>10\Pi(S)$, and $\pi J$ denotes the
smallest principal cube containing $J$. 
 Cubes of zero $\Lambda$-box measure may be kept with $\Pi(J)=0$; they do not contribute to the sums. 
  The count is stated for a general measure $\Lambda$ because it uses only the dyadic tree and the chamber-box measures.  
 In the proof of Theorem~\ref{thm:scalar-chamber} one  takes $\Lambda=\widetilde\eta=t^2\,d\eta$ and $h=g/t$.

One simple zero-extension observation is used throughout the appendix.

\begin{lemma}
\label{lem:app-zero-extension}
Let $\nu$ be a locally finite positive Borel measure on $\Ccal$, and let $\Lambda$ be a locally finite positive Borel measure on $\Ccal\times(0,\infty)$.  
Define zero extensions by
$$
        \nu^\sharp(E):=\nu(E\cap\Ccal), \qquad \Lambda^\sharp(A):=\Lambda(A\cap(\Ccal\times(0,\infty))).
$$
Then, for every dyadic cube $Q\subseteq\R^N$ and every fixed dilation factor $a>0$,
$$
        \nu^\sharp(Q)=\nu(Q_\Ccal),  \qquad   \Lambda^\sharp(\widehat{aQ})=\Lambda(\widehat{aQ}_\Ccal).
$$
Moreover, dyadic containment, Whitney neighbour relations, and principal-cube relations such as
$$
        H\subseteq I_\theta,  \qquad  \pi H=\pi I_\theta,
        \qquad    \pi H\subsetneq\pi I_\theta
$$
are exactly the same before and after the chamber restriction.  
This statement does not assert that chamber Whitney cubes are full-space Whitney cubes for a zero-extended open set; 
it asserts only that, once the ambient dyadic cubes have been chosen by the chamber stopping rules, their tree relations are unchanged.
\end{lemma}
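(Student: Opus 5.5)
The two measure identities come straight from the definitions, and the tree relations follow because none of them involves a measure.

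For the first identity, take a dyadic cube $Q$. By definition of the zero extension,
$$
\nu^\sharp(Q)=\nu(Q\cap\Ccal)=\nu(Q_\Ccal).
$$
For the second, fix $a>0$ and write $\widehat{aQ}:=(aQ)\times(0,\ell(Q)]$ for the ambient box. The convention is the same as for $\widehat{aQ}_\Ccal$: the vertical height is $\ell(Q)$, not $\ell(aQ)$. Since intersecting with $\Ccal\times(0,\infty)$ acts only on the spatial factor,
$$
\widehat{aQ}\cap(\Ccal\times(0,\infty))=((aQ)\cap\Ccal)\times(0,\ell(Q)]=(aQ_\Ccal)\times(0,\ell(Q)]=\widehat{aQ}_\Ccal .
$$
Applying $\Lambda$ gives $\Lambda^\sharp(\widehat{aQ})=\Lambda(\widehat{aQ}_\Ccal)$. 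The half-open conventions play no role here, because the identity is an equality of sets.

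For the combinatorial assertions, the key observation is that each listed relation is a relation among ambient dyadic cubes in $\Dcal$. These include the containment $H\subseteq I_\theta$, the partition $3Q=\bigsqcup_\theta I_\theta$, the neighbour relation that dilates $aQ$ and $aQ'$ meet, and parent/child relations. None of them refers to $\nu$ or $\Lambda$, so each is literally unchanged when $\nu,\Lambda$ are replaced by $\nu^\sharp,\Lambda^\sharp$.

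The principal-cube relations need slightly more care, since the stopping family $\Gcal$ is built from the averages $\Pi$. I would check that these averages coincide. Because $\Lambda^\sharp$ vanishes off $\Ccal\times(0,\infty)$ and $h$ is defined on the chamber (extended by zero), the box identity above with $a=1$ gives
$$
\Lambda^\sharp(\widehat J)=\Lambda(\widehat J_\Ccal),\qquad \iint_{\widehat J}h\,d\Lambda^\sharp=\iint_{\widehat J_\Ccal}h\,d\Lambda .
$$
Hence $\Pi^\sharp(J)=\Pi(J)$ for every $J$. This includes the zero-measure convention, which is triggered by the same condition $\Lambda(\widehat J_\Ccal)=0$ on both sides.

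The stopping rule "maximal $S'\subsetneq S$ with $\Pi(S')>10\Pi(S)$" depends only on these numbers and on the dyadic tree. I would then argue by induction on generations, starting from the common top cube $Q_0$, that the two constructions produce the same family $\Gcal$. Once $\Gcal$ is the same, the map $\pi$ and the relations $\pi H=\pi I_\theta$ and $\pi H\subsetneq\pi I_\theta$ coincide as well.

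The one point needing attention is the scope of the claim, and I would state it explicitly. The families $\mathcal I_k$ and $\mathcal R_{k+m+1}(Q)$ were selected by chamber rules, which involve $\Omega_k\subseteq\Ccal$ and $3H_\Ccal\subseteq\Omega_{k+m+1}$. These cubes are taken as given; the lemma does not claim they are full-space Whitney cubes of some open set. It claims only that their ambient tree relations are unaffected by the restriction, and that is exactly what the argument above shows.
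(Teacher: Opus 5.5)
Your proposal is correct and follows essentially the same approach as the paper. The measure identities are immediate from the definitions, and the tree relations are unchanged because they are relations among ambient dyadic cubes, with the chamber-selected families $\mathcal I_k$ and $\mathcal R_{k+m+1}(Q)$ taken as given. Your explicit check that $\Pi$ takes the same value whether computed from $\Lambda$ on $\widehat J_\Ccal$ or from $\Lambda^\sharp$ on $\widehat J$, followed by induction over the stopping generations, spells out what the paper asserts in one line when it says the restriction ``does not change any principal-cube ancestor relation.''
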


\begin{proof}
The measure identities follow directly from the definitions.  
All stopping-time constructions used in Section~\ref{sec:scalar-chamber} are based on ambient dyadic cubes in $\R^N$.
Intersecting a cube with $\Ccal$, or a box with $\Ccal\times(0,\infty)$, changes only the measure attached to that cube or box.  
It does not change parent-child containment, the half-open partition of $3Q$, the truth of conditions such as $H\subseteq I_\theta$, 
or any principal-cube ancestor relation.  
Whitney maximality itself remains the chamber condition $3Q_\Ccal\subseteq\Omega$ and $9Q_\Ccal\cap(\Ccal\setminus\Omega)\ne\varnothing$.
\end{proof}

The level-box overlap estimate used in the separated chain count has already been proved as Lemma~\ref{lem:level-box-overlap}.  
It is independent of the principal-cube stopping family, so it is not restated here.

The proof below also uses the exact dyadic decomposition
\begin{equation}\label{eq:app-top-piece-decomposition}
        \widehat I_\Ccal   =\bigsqcup_{J\in\Dcal:\,J\subseteq I}W_J^\Ccal,   \qquad    W_J^\Ccal:=J_\Ccal\times(\ell(J)/2,\ell(J)].
\end{equation}
Indeed, for each $(x,t)\in\widehat I_\Ccal$ there is a unique dyadic descendant $J\subseteq I$ containing $x$ and satisfying $\ell(J)/2<t\le\ell(J)$.  
The half-open convention removes boundary ambiguity.  Therefore, for arbitrary $\Lambda$, it is enough to prove bounded multiplicity for these top pieces.

We now isolate the finite chain count.  The level information is part of the stopping data; 
the families below are not arbitrary collections of nested cubes. For a fixed principal cube $S\in\Gcal$, set
\begin{align*}
        \mathfrak N(S) &:=\Bigl\{(k,Q,\theta): Q\in\mathcal I_k,   \ \nu(F_k(Q))>\delta\nu(Q_\Ccal),\ \pi I_\theta=S\Bigr\},\\
      \mathfrak S(S)  &:=\Bigl\{(k,Q,\theta,H): Q\in\mathcal I_k,   \ H\in\mathcal R_{k+m+1}(Q),\ H\subseteq I_\theta,   \ \pi H=S\subsetneq\pi I_\theta\Bigr\}.
\end{align*}

\begin{lemma}
\label{lem:app-sawyer-chain-count}
For every principal cube $S\in\Gcal$,
\begin{align}
        \sum_{(k,Q,\theta)\in\mathfrak N(S)} \Lambda(\widehat{I_\theta}_\Ccal)   &\le C_{N,\delta}\Lambda(\widehat S_\Ccal),  \label{eq:app-local-neighbour-carleson}\\
        \sum_{(k,Q,\theta,H)\in\mathfrak S(S)} \Lambda(\widehat H_\Ccal)   &\le C_{N,m}\Lambda(\widehat S_\Ccal).  \label{eq:app-local-separated-carleson}
\end{align}
\end{lemma}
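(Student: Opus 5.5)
We prove both estimates by reducing, through the top-piece decomposition \eqref{eq:app-top-piece-decomposition}, to a bounded-multiplicity statement. It suffices to show that each point $(x,t)\in\widehat S_\Ccal$ lies in $\widehat{I_\theta}_\Ccal$ (resp.\ $\widehat H_\Ccal$) for at most $C_{N,\delta}$ (resp.\ $C_{N,m}$) admissible index tuples. By Lemma~\ref{lem:app-zero-extension}, all containment and principal-cube relations may be read in the ambient dyadic tree. So we argue with ambient cubes and attach the chamber-box measures $\Lambda(\cdot_\Ccal)$ only at the end. In both families every cube involved lies inside $S$: in $\mathfrak N(S)$ because $\pi I_\theta=S$ forces $I_\theta\subseteq S$, and in $\mathfrak S(S)$ because $\pi H=S$. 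Hence the boxes $\widehat{I_\theta}_\Ccal$ and $\widehat H_\Ccal$ are sub-boxes of $\widehat S_\Ccal$.

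For \eqref{eq:app-local-neighbour-carleson}, fix $(x,t)\in\widehat S_\Ccal$. The cubes $I_\theta$ with $(x,t)\in\widehat{I_\theta}_\Ccal$ form a dyadic chain through $x$ with $\ell(I_\theta)\ge t$, and all lie inside $S$. Each such $I_\theta$ is one of the $3^N$ neighbours of a Whitney cube $Q\in\mathcal I_k$ of the same side length, and $Q$ is heavy: $\nu(F_k(Q))>\delta\nu(Q_\Ccal)$. The sets $F_k(Q)$ are pairwise disjoint over $k$ and, for fixed $k$, over $Q$. So the heavy condition bounds the number of levels $k$ at which a fixed cube $Q$ can appear heavily. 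This is the role noted in Remark~\ref{rem:heavy-not-carleson}. Summing over one generation gives $\sum_k\nu(F_k(Q))\le\nu(Q_\Ccal)$, so there are at most $\delta^{-1}$ levels per $Q$. It remains to rule out long chains of distinct side lengths. For this we use the top pieces: the piece $W_J^\Ccal$ containing $(x,t)$ determines $\ell(J)\simeq t$. The issue is to control the cubes $I_\theta\supseteq J$ with $\ell(I_\theta)\gg\ell(J)$. I would try to show that $\Lambda(\widehat{I_\theta}_\Ccal)$ summed over these nested cubes is controlled geometrically. I expect this is not available for an arbitrary $\Lambda$, so I would instead organize the sum by $I_\theta$ and count, for each fixed dyadic cube $I\subseteq S$, the tuples $(k,Q,\theta)$ with $I_\theta=I$. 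There are at most $3^N$ choices of $Q$ and at most $\delta^{-1}$ levels. The total is then $\lesssim 3^N\delta^{-1}\sum_{I\subseteq S,\ \pi I=S}\Lambda(\widehat I_\Ccal)$. That last sum is still not bounded by $\Lambda(\widehat S_\Ccal)$ in general. So the chain structure must come from the nesting of the $\Omega_k$: Whitney cubes at different levels containing the same point are nested with levels increasing as sizes decrease (Lemma~\ref{lem:whitney-chamber}(4)). Hence, at a fixed point, only $O_N(1)$ heavy cubes per dyadic scale remain. Combined with the restriction $t\le\ell(I_\theta)$, this is where the Sawyer--Wheeden count enters.

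For \eqref{eq:app-local-separated-carleson}, the condition $\pi H=S\subsetneq\pi I_\theta$ says that $H$ lies below a stopping cube strictly inside $\pi I_\theta$, while $H\in\mathcal R_{k+m+1}(Q)$. Fix $(x,t)\in\widehat H_\Ccal$. Then $(x,t)\in\widehat{3Q}_\Ccal\cap\widehat\Omega_{k+m+1}$. For fixed $k$ the cubes $H\in\mathcal R_{k+m+1}(Q)$ are disjoint, and only $C_N$ cubes $Q$ at that level have $3Q\ni x$. The main obstacle is counting the levels $k$. Here the separation $S\subsetneq\pi I_\theta$ must be used: it forces $\ell(I_\theta)>\ell(S)\ge\ell(H)$, so $\ell(Q)>\ell(S)$. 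Such $Q$ are Whitney cubes of levels whose $3Q$ contain $x$ and whose size exceeds $\ell(S)$, while $H\subseteq S$ sits inside $\Omega_{k+m+1}$. I would compare this with Lemma~\ref{lem:level-box-overlap} applied at the height $\ell(S)$, to show that only $O_{N,m}(1)$ levels $k$ admit a Whitney cube $Q\ni x$ (up to neighbours) with $\ell(Q)>\ell(S)\ge\ell(H)$ and $3H_\Ccal\subseteq\Omega_{k+m+1}$. The crossing argument in that lemma gives such a bound once the levels are pinned between the scales $\ell(H)$ and $\ell(Q)$. This yields bounded multiplicity, and integrating against $\Lambda$ over $\widehat S_\Ccal$ gives \eqref{eq:app-local-separated-carleson}.
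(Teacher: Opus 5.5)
Your reduction follows the paper's: you pass to pointwise multiplicity through the top pieces \eqref{eq:app-top-piece-decomposition}, use disjointness of the $F_k(Q)$ in $k$ to allow at most $\delta^{-1}$ heavy levels per fixed $Q$, and then count ancestor chains. The gap in \eqref{eq:app-local-neighbour-carleson} is at the point where you stop. After controlling one dyadic scale, nothing in your argument bounds the number of scales between $t$ and $\ell(S)$, and naming ``the Sawyer--Wheeden count'' does not supply such a bound. The paper's own argument at this point, \eqref{eq:app-neighbour-level-count}, also counts only inside one block of $m+2$ consecutive levels and never bounds the number of blocks.

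No argument of this kind can close the gap. The chain count uses nothing about the $\Omega_k$ beyond nesting and their Whitney cubes, and for such data \eqref{eq:app-local-neighbour-carleson} fails:
\begin{itemize}
\item Let $\Lambda$ be the unit point mass at $(x_0,t_0)$ and $h\equiv1$. Nothing is stopped, so $\Gcal=\{Q_0\}$ and $\mathfrak N(Q_0)$ consists of all heavy triples.
\item Choose $x_0\in\Ccal$ far from the walls with all coordinates in $\Z+\tfrac13$, so $x_0$ lies in the middle third of every dyadic cube of side at most $1$ containing it. Let $\Omega_k=B(x_0,A^{-k})$ with $A=100N$, and let $\nu$ give $\Omega_j\setminus\Omega_{j+1}$ mass $2^{-j}$.
\item The Whitney cube $Q_k\ni x_0$ of $\Omega_k$ has $\ell(Q_k)\simeq_N A^{-k}$ and contains $\Omega_{k+m}$. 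Hence $\nu(F_k(Q_k))=2^{-k-m}\ge 2^{-m-1}\nu(Q_{k,\Ccal})$, so every $Q_k$ is heavy once $\delta<2^{-m-1}$, which is the regime of the main proof.
\item Since $(x_0,t_0)\in\widehat{(Q_k)}_\Ccal$ whenever $\ell(Q_k)\ge t_0$, the left side is $\gtrsim_N\log(1/t_0)$, while the right side is $C_{N,\delta}$.
\end{itemize}
Level sets of $T^*g$ for a point mass $\eta$ also shrink geometrically to the point. What $L_{3,1}$ actually needs should be obtained as in Sawyer's argument, with no $\widetilde\eta$-packing. Keep $T\one_{F_k(Q)}$ and the denominator $\nu(F_k(Q))$ instead of passing to $Q_\Ccal$ and $\delta\nu(Q_\Ccal)$. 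Apply Cauchy--Schwarz on $F_k(Q)$, and use $T^*(t\one_{\widehat{I_\theta}_\Ccal})\le T^*(t\one_{\widehat S_\Ccal})$ for $I_\theta\subseteq S$. The $F_k(Q)$ are pairwise disjoint in $(k,Q)$ and lie in $3S_\Ccal$, so the terms with $\pi I_\theta=S$ add up to $\lesssim_N B^2\Pi(S)^2\widetilde\eta(\widehat S_\Ccal)$ by the backward test for $S$.

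For \eqref{eq:app-local-separated-carleson}, your idea of applying Lemma~\ref{lem:level-box-overlap} at height $\ell(S)$ can be made precise, but the step that makes it work is missing from your sketch. Since $H\subseteq S\subsetneq I_\theta$, maximality of $H$ in $\mathcal R_{k+m+1}(Q)$ forces $H$ to equal the Whitney cube $J$ of $\Omega_{k+m+1}$ containing $x$. Indeed $H\subseteq J$ always; if $\ell(J)<\ell(Q)$ then $J\subseteq I_\theta$ is admissible, and otherwise $I_\theta\subseteq J$ would be admissible. Consequently $\ell(H)\le\ell(S)\le\ell(Q)/2$ and $(x,2\ell(S))\in\widehat{3Q}_\Ccal\setminus\widehat\Omega_{k+m+1}$. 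Without maximality, ``levels admitting some $H$ with $3H_\Ccal\subseteq\Omega_{k+m+1}$'' restricts nothing.

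Even with this step, the reduction only moves the problem into Lemma~\ref{lem:level-box-overlap}, which the paper's proof also relies on, and there is a second defect. $\mathfrak S(S)$ carries no heavy condition, so no bound $C_{N,m}$ is possible. If the Whitney cubes of $\Omega_k$ near $x$ coincide for $M$ consecutive levels (the level sets changing only in a thin layer away from $x$), the same $(Q,\theta,H)$ is counted for about $M$ values of $k$. The crossing argument of Lemma~\ref{lem:level-box-overlap} breaks for the same reason.

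If you add the restriction $\nu(F_k(Q))>\delta\nu(Q_\Ccal)$, which does hold where the estimate is used ($Q\in\Gamma_k^3$), the count closes:
\begin{itemize}
\item Let $J_j$ be the Whitney cube of $\Omega_j$ through $x$; these are nested and decreasing in $j$.
\item The counted levels satisfy $\ell(J_k)\gtrsim\ell(S)\ge\ell(J_{k+m+1})$, so at most $m+1$ of them have $\ell(J_k)>\ell(S)$.
\item For the remaining levels $\ell(Q)\simeq\ell(S)$, which leaves $O_N(1)$ candidate cubes $Q$, each heavy at most $\delta^{-1}$ times.
\end{itemize}
This gives the bound $C_N(m+\delta^{-1})$.
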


\begin{proof}
By \eqref{eq:app-top-piece-decomposition}, it is enough to prove the following multiplicity bounds for each top piece $W_J^\Ccal\subseteq\widehat S_\Ccal$:
\begin{align}
       \#\Bigl\{(k,Q,\theta)\in\mathfrak N(S):  W_J^\Ccal\subseteq\widehat{I_\theta}_\Ccal\Bigr\} &\le C_{N,\delta},   \label{eq:app-neighbour-multiplicity}\\
        \#\Bigl\{(k,Q,\theta,H)\in\mathfrak S(S):  W_J^\Ccal\subseteq\widehat H_\Ccal\Bigr\}  &\le C_{N,m}.   \label{eq:app-separated-multiplicity}
\end{align}
Integrating these bounds over the disjoint top pieces then gives \eqref{eq:app-local-neighbour-carleson} and \eqref{eq:app-local-separated-carleson}.

We prove \eqref{eq:app-neighbour-multiplicity}.  The inclusion  $W_J^\Ccal\subseteq\widehat{I_\theta}_\Ccal$ implies $J\subseteq I_\theta\subseteq S$, 
and $I_\theta$ lies in the principal corona of $S$.  
If the ambient cube $I_\theta$ is fixed, 
then there are only $3^N$ possible Whitney cubes $Q$ for which $I_\theta$ appears as one of the half-open subcubes in the decomposition of $3Q$.  
After $Q$ is fixed, the sets $F_k(Q)$ are pairwise disjoint in $k$, and each counted occurrence satisfies
$$
        \nu(F_k(Q))>\delta\nu(Q_\Ccal).
$$
Thus the same spatial Whitney cube can occur at most $\lfloor\delta^{-1}\rfloor$ 
times.  This is the density part of the count.

It remains to count ancestor-chain repetitions.  Fix the neighbour position $\theta$ and let
$$
        J\subseteq I_{\theta}^{(1)}\subsetneq I_{\theta}^{(2)}  \subsetneq\cdots\subseteq S
$$
be counted ancestors in the same principal corona, with associated levels  $k_1,k_2,\ldots$.  
Since no principal stopping cube lies strictly between two successive ancestors, all these cubes stay in one principal corona.  
The Whitney condition for the cube attached to level $k_j$ says
$$
        3Q_j{}_\Ccal\subseteq\Omega_{k_j},  \qquad   9Q_j{}_\Ccal\cap(\Ccal\setminus\Omega_{k_j})\ne\varnothing.
$$
The nesting of the level sets and the maximality in this Whitney condition imply that, in every block of consecutive levels of length $m+2$,
\begin{equation}\label{eq:app-neighbour-level-count}
        \#\Bigl\{j:\nu(F_{k_j}(Q_j))>\delta\nu(Q_j{}_{\Ccal})  \text{ and } a\le k_j\le a+m+1\Bigr\}   \le C_{N,m}\delta^{-1}.
\end{equation}

Equivalently, two genuinely different counted ancestors in the same corona can  reappear only after the level has left the current block of consecutive levels.
This is the restricted principal-cube count of  Sawyer--Wheeden~\cite[p.~861]{SawyerWheeden}.  
Combining the level count \eqref{eq:app-neighbour-level-count} with the fixed-cube density count proves \eqref{eq:app-neighbour-multiplicity}.

We prove \eqref{eq:app-separated-multiplicity}.  If $W_J^\Ccal\subseteq\widehat H_\Ccal$ and $\pi H=S\subsetneq\pi I_\theta$, 
then the possible cubes $H$ containing $J$ form an ancestor chain inside the corona of $S$, 
while $I_\theta$ lies in a strictly larger principal corona.  
The maximality condition defining $\mathcal R_{k+m+1}(Q)$ gives
$$
        3H_\Ccal\subseteq\Omega_{k+m+1},   \qquad    F_k(Q)\subseteq\Ccal\setminus\Omega_{k+m+1}.
$$
Thus a counted cube $H$ is a crossing between the Whitney selection at level $k$ and the Whitney selection at level $k+m+1$.  
Along one ancestor chain, the level-box overlap from Lemma~\ref{lem:level-box-overlap} gives the following level-count estimate
\begin{equation}\label{eq:app-separated-level-count}
        \#\{(k,Q,\theta,H)\in\mathfrak S(S):   W_J^\Ccal\subseteq\widehat H_\Ccal,   a\le k\le a+m+1\}  \le C_{N,m}.
\end{equation}
If another crossing appeared outside this bounded range of levels without a new principal stopping cube, an intermediate selected cube would still satisfy the
same Whitney admissibility condition, contradicting maximality.
The strict relation $S\subsetneq\pi I_\theta$ only says that $I_\theta$ has left the corona  of $S$; 
it creates no new admissible crossings.  
This proves  \eqref{eq:app-separated-multiplicity}, and the lemma follows.
\end{proof}

The packing estimates used in Section~\ref{sec:scalar-chamber} now follow from the chain  count.

\begin{proposition}
\label{prop:app-dyadic-sawyer-inputs}
In the finite stopping-time configuration described above, 
the following estimates hold with constants independent of the finite truncation.
\begin{enumerate}
\item If the outer sum is restricted to triples with $Q\in\mathcal I_k$ and
$$
        \nu(F_k(Q))>\delta\nu(Q_\Ccal),
$$
then
\begin{equation}\label{eq:app-neighbour-packing}
        \sum_{k,Q,\theta}  \Pi(\pi I_\theta)^2\Lambda(\widehat{I_\theta}_\Ccal)
        \le   C_{N,\delta}  \sum_{S\in\Gcal}\Pi(S)^2\Lambda(\widehat S_\Ccal).
\end{equation}

\item For every fixed $m\ge1$,
\begin{equation}\label{eq:app-separated-packing}
       \sum_{k,Q,\theta} \sum_{\substack{H\in\mathcal R_{k+m+1}(Q):\ H\subseteq I_\theta,\\
        \pi H\subsetneq\pi I_\theta}}  \Lambda(\widehat H_\Ccal)\Pi(\pi H)^2  \le     C_{N,m}
        \sum_{S\in\Gcal}\Pi(S)^2\Lambda(\widehat S_\Ccal).
\end{equation}
\end{enumerate}
\end{proposition}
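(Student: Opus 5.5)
The plan is to derive both estimates from Lemma~\ref{lem:app-sawyer-chain-count} by regrouping the left-hand sums according to the principal cube that carries the weight $\Pi(\cdot)^2$. The weight is constant on each group, so each group is controlled by a local Carleson-type count. This is the standard corona reorganization, and the chain-count lemma already supplies the required local packing.

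For \eqref{eq:app-neighbour-packing}, each admissible triple $(k,Q,\theta)$, with $Q\in\mathcal I_k$ and $\nu(F_k(Q))>\delta\nu(Q_\Ccal)$, has a well-defined principal cube $S:=\pi I_\theta\in\Gcal$. This is because the truncation fixes a top cube $Q_0$ containing every cube in the sums. By definition the triple then belongs to $\mathfrak N(S)$. Grouping gives
\begin{align*}
\sum_{k,Q,\theta}\Pi(\pi I_\theta)^2\Lambda(\widehat{I_\theta}_\Ccal)
=\sum_{S\in\Gcal}\Pi(S)^2\sum_{(k,Q,\theta)\in\mathfrak N(S)}\Lambda(\widehat{I_\theta}_\Ccal).
\end{align*}
The inner sum is bounded by $C_{N,\delta}\Lambda(\widehat S_\Ccal)$ by \eqref{eq:app-local-neighbour-carleson}. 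Cubes with $\Lambda(\widehat{I_\theta}_\Ccal)=0$, or with $\Pi=0$, contribute nothing and can be dropped. This proves the first estimate.

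For \eqref{eq:app-separated-packing}, we group each quadruple $(k,Q,\theta,H)$ with $H\in\mathcal R_{k+m+1}(Q)$, $H\subseteq I_\theta$ and $\pi H\subsetneq\pi I_\theta$ according to $S:=\pi H$. The quadruple then lies in $\mathfrak S(S)$, and the weight is $\Pi(S)^2$. So the left side equals
\begin{align*}
\sum_{S\in\Gcal}\Pi(S)^2\sum_{(k,Q,\theta,H)\in\mathfrak S(S)}\Lambda(\widehat H_\Ccal),
\end{align*}
and \eqref{eq:app-local-separated-carleson} bounds the inner sum by $C_{N,m}\Lambda(\widehat S_\Ccal)$. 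Note that the definition of $\mathfrak S(S)$ does not mention the heaviness condition, so the bound holds whether or not the outer sum is restricted to heavy cubes. Either way the restricted sum is dominated by the unrestricted one by positivity.

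The only points needing care are bookkeeping. First, $\pi$ must be well defined for every cube that appears, which follows from the finite truncation and the choice of $Q_0$. Second, the chamber restriction must not affect the tree relations used in the grouping, which is Lemma~\ref{lem:app-zero-extension}. Third, the constants must be independent of the truncation, which is inherited from Lemma~\ref{lem:app-sawyer-chain-count}. The substantive content, namely the multiplicity counts, already sits in that lemma, so I expect no real obstacle beyond checking that each triple or quadruple is assigned to exactly one $S$. Uniqueness of the assignment holds because $\pi$ is a function, so no term is double-counted across different $S$.
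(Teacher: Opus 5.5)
Your proposal is correct and is essentially the paper's own proof. You group the first sum by $S=\pi I_\theta$ and the second by $S=\pi H$, pull out the constant weight $\Pi(S)^2$, and bound each group by the local counts \eqref{eq:app-local-neighbour-carleson} and \eqref{eq:app-local-separated-carleson} of Lemma~\ref{lem:app-sawyer-chain-count}; your extra bookkeeping ($\pi$ being well defined via the top cube $Q_0$, and the heaviness condition not appearing in $\mathfrak S(S)$) agrees with the paper and does no harm.
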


\begin{proof}
For the first estimate, group the left-hand side by $S=\pi I_\theta$.  
On the group with fixed $S$, $\Pi(\pi I_\theta)=\Pi(S)$, and  Lemma~\ref{lem:app-sawyer-chain-count} gives
$$
        \sum_{\substack{k,Q,\theta:\ \pi I_\theta=S}} \Lambda(\widehat{I_\theta}_\Ccal)
        \le C_{N,\delta}\Lambda(\widehat S_\Ccal),
$$
where the heavy condition remains in force.  
Multiplying by $\Pi(S)^2$ and summing over $S\in\Gcal$ gives \eqref{eq:app-neighbour-packing}.

For the second estimate, group by $S=\pi H$.  
The strict relation $\pi H\subsetneq\pi I_\theta$ is exactly the separated case in Lemma~\ref{lem:app-sawyer-chain-count}; hence
$$
        \sum_{\substack{k,Q,\theta,H:\ \pi H=S,\ \pi H\subsetneq\pi I_\theta}} \Lambda(\widehat H_\Ccal)
        \le C_{N,m}\Lambda(\widehat S_\Ccal).
$$
Multiplying by $\Pi(S)^2$ and summing in $S$ proves \eqref{eq:app-separated-packing}.
\end{proof}

\begin{remark}\label{rem:app-no-naive-overlap}
One should not replace the chain count above by the informal assertion that all boxes $\widehat{I_\theta}_\Ccal$ with $\pi I_\theta=S$ have uniformly bounded
overlap merely because they have the same principal ancestor.  
The valid statement is the finite principal-cube chain count for the stopping configuration actually produced in Section~\ref{sec:scalar-chamber}.  
This is the input used in \eqref{eq:principal-neighbour-packing-L31} and \eqref{eq:principal-packing-L32}.
\end{remark}

\bigskip
\noindent\textbf{Acknowledgements}\  \  
Qingdong Guo is supported by Fujian NSF of China \#2026J008314 and the start-up fund of XMUT \#YKJ25060R. 
 Ji Li is supported by Australian Research Council DP260100485.  
 Brett D. Wick is partially supported by National Science Foundation DMS \#2349868 and by Australian Research Council DP260201083.
Liangchuan Wu is supported by NNSF of China \#12201002.


\end{document}